\def\no{\noindent}
\def\pmatrix{\left(\begin{array}}
\def\endpmatrix{\end{array}\right)}
\newtheorem{theo}{Theorem}[section]
\newtheorem{lem}[theo]{Lemma}
\newtheorem{rem}[theo]{Remark}
\newtheorem{defi}[theo]{Definition}
\newtheorem{assum}[theo]{Assumption}
\newtheorem{prop}[theo]{Proposition}
\newcommand{\norm}[1]{\left\Vert#1\right\Vert}
\title{Functionally-fitted energy-preserving integrators\\ for Poisson systems}
\author{Bin Wang\,
\footnote{School of Mathematical Sciences, Qufu Normal University,
Qufu 273165, P.R. China; Mathematisches Institut, University of
T\"{u}bingen, Auf der Morgenstelle 10, 72076 T\"{u}bingen, Germany.
The research is supported in part by the Alexander von Humboldt
Foundation and by the Natural Science Foundation of Shandong
Province (Outstanding Youth Foundation) under Grant ZR2017JL003.
E-mail:~{\tt wang@na.uni-tuebingen.de} } \and Xinyuan
Wu\thanks{School of Mathematical Sciences, Qufu Normal University,
Qufu 273165, P.R. China; Department of Mathematics, Nanjing
University, Nanjing 210093, P.R. China. The research is supported in
part by the National Natural Science Foundation of China under Grant
11671200. E-mail:~{\tt xywu@nju.edu.cn}} }
\begin{document}
\maketitle

\begin{abstract} In this paper, we present an error analysis of one-stage explicit
extended Runge--Kutta--Nystr\"{o}m integrators for semilinear wave
equations. These equations are analysed  by using spatial
semidiscretizations with periodic boundary conditions in one space
dimension. Optimal second-order convergence is proved  without
requiring Lipschitz continuous and higher regularity of the exact
solution. Moreover, the error analysis is not restricted to the
spectral semidiscretization in space.

\medskip
\no{\bf Keywords:}  error bounds, semilinear wave equations,
extended Runge--Kutta--Nystr\"{o}m  integrators,
 trigonometric integrators

\medskip
\no{\bf MSC:} 65M15, 65P10, 65L70, 65M20

\end{abstract}


\section{Introduction}
In this paper, we are concerned with the error analysis of one-stage
explicit extended Runge--Kutta--Nystr\"{o}m (ERKN) integrators  for
solving the semilinear wave equation
\begin{equation}\label{wave equa}
\begin{array}[c]{ll}
u_{tt}=u_{xx}+u^p,\ \ \ u=u(x,t),\ \ \ t\in[t_0,T].
\end{array}
\end{equation}
We consider  real-valued solutions to \eqref{wave equa} with
$2\pi$-periodic boundary conditions in one space dimension
($x\in\mathbb{T}  = \mathbb{R}/(2\pi \mathbb{Z})$). Denoting by
$H^s$ the Sobolev space $H^s(\mathbb{T})$,  the initial values of
the equation are given by
\begin{equation}\label{initial val}
\begin{array}[c]{ll}
 u(\cdot,t_0)\in H^{s+1},\ \ \  u_t(\cdot,t_0)\in H^{s}
\end{array}
\end{equation}
for $s\geq0.$ We note that the energy is finite in the special case
$ s = 0$.

By a semidiscretization in space, this  equation can be transformed
into  a system of second-order ordinary differential equations
(ODEs) of the form
\begin{equation}
\ddot{y}(t)=M y(t)+f(y(t)), \label{intrprob}%
\end{equation}
where  the matrix $M$ describes   the discretized second spatial
derivative in \eqref{wave equa} and  $f(y)$ denotes  the polynomial
nonlinearity in \eqref{wave equa}.  It is noted that the eigenvalues
of the matrix  $M$  range from order one to the order of the spatial
discretization parameter which is typically large. Thus the spatial
semidiscretization  exhibits a variety of oscillations and  the
solution of \eqref{intrprob} is a high-frequency oscillator, in
general.  In order to
 efficiently solve \eqref{intrprob}, many effective
integrators have been researched  (see, e.g.
\cite{franco2006,hairer2002,Hochbruck2010,iserles2002,wu2013-book},
and the references therein).   The Gautschi-type integrators have
been well derived and analysed in \cite{Hochbruck1999}. Exponential
integrators have been widely developed as an efficient approach and
we refer the reader to
 \cite{Hochbruck2005,Hochbruck2009,Li_Wu(sci2016),wang2017-JCM}
for example.  These methods were also shown to work well for wave
equations in the semilinear case (see, e.g.
\cite{Bao12,Cano13,Cano14,Cohen08,Gauckler17-1}).
 Recently,  a standard form of trigonometric integrators
called as ERKN  integrators   was
 formulated for solving second-order highly oscillatory differential equations  in \cite{wu2010-1}, which can be of arbitrarily high
 order. It is clear that the Gautschi-type integrators of order two yield examples of ERKN integrators.
  Further researches of  these trigonometric integrators for oscillatory ODEs are
referred to \cite{wang-2016,wang2017-ANM,wu2017-JCAM,wang2017-Cal}.
 On basis of  a novel operator-variation-of-constants
formula established in \cite{Wu2015JMAA},   ERKN integrators  are
well developed   for wave equations  (see, e.g.
\cite{Liu_Iserles_Wu(2017-2),JCP-Liu,AML(2017)_Liu_Wu,JCAM(2016)_Wu_Liu,wubook2015}).

It is noted that   the error analysis of trigonometric integrators
for ODEs has been researched by many papers (see, e.g.
\cite{B.garcia1999,Hochbruck1999,Grimm2005
1,Grimm2006,Hochbruck2005,Hochbruck2009,wu2013-ANM}), but has not
sufficient yet because in all these publications,
 the nonlinearity is assumed to be Lipschitz continuous.   There is
also much work about the error analysis of trigonometric integrators
for PDEs. In \cite{Grimm06}, the author analysed the Gautschi-type
trigonometric integrators for sine-Gordon equations whose
nonlinearity is Lipschitz continuous in $H^0$. In \cite{Dong14} and
Chapter IV of \cite{Faou12}, the systems of PDEs were considered in
higher-order Sobolev spaces and second-order error bounds of
trigonometric integrators were shown under  some corresponding
higher regularity assumptions on the exact solution. The author in
\cite{Gauckler15} proved error bounds of trigonometric integrators
for wave equations  without requiring higher regularity of the exact
solution, which was realized  by performing the error analysis  in
two stages. These two-stage arguments have also been used by many
researchers, such as in
\cite{Gauckler11,Koch11,Lubich08,Thalhammer12} for discretizations
of nonlinear Schr\"{o}dinger equations and in
\cite{Gottlieb12,Holden13} for discretizations of equations with
Burgers nonlinearity. In the case of quasilinear wave equations,
error analysis  has been presented recently in
\cite{Gauckler17,Hochbruck17,Lubich17} for different methods.

To our knowledge, the error analysis    of ERKN integrators has not
been   researched yet in the literature for spatial
semidiscretizations of \eqref{wave equa} with initial values of
finite energy. In this paper, we will analyse and present  error
bounds for one-stage explicit ERKN integrators  when applied to a
spectral semidiscretization in space  requiring only that the exact
solution is of finite energy. First, low-order error bounds will be
considered in a higher-order Sobolev space, where the nonlinearity
is, at least locally, Lipschitz continuous. From this low-order
error bound, a suitable regularity of the ERKN integrators will be
deduced, which is used  to overcome the lack of Lipschitz continuity
in lower-order Sobolev spaces. Then higher-order error bounds will
be shown  in these spaces by using the regularity of the ERKN
integrators. It is noted that optimal second-order convergence will
be obtained without requiring Lipschitz continuous and higher
regularity of the exact solution. Moreover,  the approach to the
error analysis  is not restricted to the spectral semidiscretization
in space.

The paper   is organized as follows. We first present  some
preliminaries in Section \ref{sec:prelim}. The main result of
  error bounds is  described     in Section \ref{sec:main
result}. Then the lower-order error bounds in higher-order Sobolev
spaces are proved in Section \ref{sec-prof1} and the higher-order
error bounds in lower-order Sobolev spaces are shown in Section
\ref{sec-prof2}. Error bounds for the spatial semidiscretization by
finite differences are discussed in Section \ref{sec:finite
differences}.
 In Section \ref{sec:experiments},
numerical experiments are  implemented   to demonstrate the
efficiency of the  ERKN  integrators  which  support our theoretical
analysis in this paper. Section \ref{sec:conclusions} is devoted to
the conclusions of the paper.

\section{Preliminaries}\label{sec:prelim}
\subsection{Spectral semidiscretization in space}
Following \cite{Gauckler15,Hairer08},  we choose spectral
collocation for the semidiscretization in space of \eqref{wave
equa}.

Consider the following trigonometric polynomial as an ansatz for the
solution of the nonlinear wave equation   \eqref{wave equa}
\begin{equation}\label{trigo pol}
\begin{array}[c]{ll}
 u_{\mathcal{K}}(x,t_0)=\sum\limits_{j\in\mathcal{K}}
 y_j(t)\mathrm{e}^{\mathrm{i}jx}\quad \ \textmd{with}\quad \
 \mathcal{K}=\{-K,-K+1,\ldots,K-1\},
\end{array}
\end{equation}
where $ y_j(t)$ for $j\in\mathcal{K}$ are the  Fourier coefficients.
By inserting this ansatz into \eqref{wave equa} and evaluating in
the collocation points $x_k = \pi k/K$ with $k \in \mathcal{K}$, we
obtain a system of second-order ODEs
\begin{equation}
\ddot{y}(t)=-\Omega^2 y(t)+f(y(t)), \label{prob}%
\end{equation}
where  $y(t)=(y_j(t))_{j\in \mathcal{K}}\in
\mathbb{C}^{\mathcal{K}}$ is the vector of Fourier coefficients.
Here, $\Omega$ is a nonnegative and diagonal matrix
\begin{equation*}
 \Omega=\textmd{diag}(\omega_j)_{j\in \mathcal{K}}\quad \ \textmd{with}\quad \
\omega_j=|j|,
\end{equation*}
and the nonlinearity $f$ is given by
\begin{equation}\label{f}%
f(y)=\underbrace{y*\ldots* y}_{p\ \textmd{times}}  \quad \
\textmd{with}\quad \ (y*z)_j=\sum\limits_{k+l\equiv j\ \textmd{mod}\
2K}y_kz_l,\ \ j\in \mathcal{K},
\end{equation}
where `$*$' denotes the discrete convolution.
 The initial values $y(t_0)$ and
$\dot{y}(t_0)$ for \eqref{prob} are given respectively by
\begin{equation}\label{ini val spe}%
y_j(t_0)= \sum\limits_{k\in \mathbb{Z}:k\equiv j\ \textmd{mod}\
2\mathcal{K}}u_k(t_0),\ \ \ \ \dot{y}_j(t_0)= \sum\limits_{k\in
\mathbb{Z}:k\equiv j\ \textmd{mod}\ 2K}\dot{u}_k(t_0),\ \ j\in
\mathcal{K},
\end{equation}
where   $u_k(t)$ and $\dot{u}_k(t)$ are denoted the Fourier
coefficients of $u(\cdot, t)$ and $u_t(\cdot, t)$, respectively. If
the initial values $u(\cdot, t_0)$ and $u_t(\cdot, t_0)$ are given
by their Fourier coefficients, we have  a simpler expression:
\begin{equation}\label{ini val-2 spe}%
y_j(t_0)=  u_j(t_0),\ \ \dot{y}_j(t_0)= \dot{u}_j(t_0),\ \ j\in
\mathcal{K}.
\end{equation}

By the variation-of-constants formula, the exact solution of the
semi-discrete system \eqref{prob} can be expressed by
\begin{equation}\label{vari cons}
\begin{array}[c]{ll}
\left(
  \begin{array}{c}
    y(t) \\
     \dot{y}(t) \\
  \end{array}
\right)=R(t-t_0)\left(
  \begin{array}{c}
    y(t_0) \\
     \dot{y}(t_0) \\
  \end{array}
\right)+\displaystyle\int_{t_0}^{t}R(t-\tau)\left(
                                 \begin{array}{c}
                                   0 \\
                                   f(y(\tau)) \\
                                 \end{array}
                               \right)d \tau
\end{array}
\end{equation}
with
 \begin{equation}\label{matr R}%
R(t)=\left(
       \begin{array}{cc}
         \cos(h\Omega) &  t \textmd{sinc}(h\Omega) \\
         -\Omega\sin(h\Omega) & \cos(h\Omega) \\
       \end{array}
     \right),
\end{equation}
 where  $\textmd{sinc}(x)$ is defined by
$\textmd{sinc}(x)=\sin(x)/x.$

\subsection{ERKN integrators} 
ERKN integrators for integrating (\ref{prob}) were first formulated
in \cite{wu2010-1}, and in this paper  we consider only one-stage
explicit scheme.

\begin{defi}
\label{erkn}  (See \cite{wu2010-1}) A one-stage  explicit ERKN
integrator for solving (\ref{prob}) is defined by%
 \begin{equation}
\left\{\begin{array}
[c]{ll}%
y^{n+c_{1}} &
=\phi_{0}(c_{1}^{2}V)y^{n}+hc_{1}\phi_{1}(c_{1}^{2}V)\dot{y}^{n},\\
y^{n+1} & =\phi_{0}(V)y^{n}+h\phi_{1}(V)\dot{y}^{n}+h^{2}
 \bar{b}_{1}(V)f(y^{n+c_{1}}),\\
\dot{y}^{n+1} &
=-h\Omega^2\phi_{1}(V)y^{n}+\phi_{0}(V)\dot{y}^{n}+h\textstyle
b_{1}(V)f(y^{n+c_{1}}),
\end{array}\right.
  \label{methods}%
\end{equation}
where   $h$ is a stepsize, $c_1$ is real constant, $b_{1}(V)$ and
$\bar{b}_{1}(V)$  are matrix-valued functions of $V\equiv
h^{2}\Omega^2$, and $
\phi_{j}(V):=\sum\limits_{k=0}^{\infty}\dfrac{(-1)^{k}V^{k}}{(2k+j)!}$
for $j=0,1,\ldots.$
\end{defi}

It is noted that for $V=h^{2}\Omega^2$, we have
\begin{equation*}
\phi_{0}(V)=\cos(h\Omega),\qquad  \phi_{1}(V)=\textmd{sinc}(h\Omega),\qquad  \phi_{2}(V)=(h\Omega)^{-2}(I-\cos(h\Omega)).%
\end{equation*}
In this paper, we present five practical one-stage explicit ERKN
integrators whose coefficients are displayed  in Table
\ref{praERKN}. By the symmetry conditions and symplecticness
conditions given in \cite{wu2013-book},   it can be verified that
ERKN1 is neither symmetric nor symplectic,   ERKN3 is symmetric and
symplectic, and the others  are symmetric but not symplectic.

\renewcommand\arraystretch{2.3}
\begin{table}$$
\begin{array}{|c|c|c|c|c|c|c|c|}
\hline
\text{Methods} &c_1  &\bar{b}_1(V)   &b_1(V)   & \text{Symmetric}  &  \text{Symplectic}  \\
\hline
\text{ERKN1} & \frac{1}{2} & \phi_{2}(V)& \phi_{0}(V/4)   & \text{Non}  &\text{Non}   \cr
\text{ERKN2} & \frac{1}{2} & \phi_{2}(V) & \phi_{1}(V)   & \text{Symmetric} & \text{Non}  \cr
\text{ERKN3} & \frac{1}{2} & \frac{1}{2} \phi_{1}(V/4) & \phi_{0}(V/4)   & \text{Symmetric} & \text{Symplectic} \cr
\text{ERKN4} & \frac{1}{2} &\frac{1}{2}  \phi^2_{1}(V/4) &\phi_{1}(V/4)\phi_{0}(V/4)   & \text{Symmetric} & \text{Non} \cr
\text{ERKN5} & \frac{1}{2} &\frac{1}{2} \phi_{1}(V)\phi_{1}(V/4) &\phi_{1}(V)\phi_{0}(V/4)   & \text{Symmetric} & \text{Non} \cr
 \hline
\end{array}
$$
\caption{Four one-stage explicit ERKN integrators.} \label{praERKN}
\end{table}

\subsection{Useful results}
In this paper, we measure  the error  by the norm (it has been
considered in \cite{Gauckler15,Hairer08})
\begin{equation}
\norm{y}_s:=\Big(\sum\limits_{ j \in\mathcal{K}}  \langle j\rangle
^{2s}|y_j|^2 \Big)^{1/2}\quad \ \textmd{with}\quad \ \langle
j\rangle=\max(1,|j|)
\label{norm}%
\end{equation}
for $y \in \mathbb{C}^{\mathcal{K}}$, where  $s\in \mathbb{R}$. This
norm is (equivalent to) the Sobolev $H^s$-norm  of the trigonometric
polynomial $\sum\limits_{ j \in\mathcal{K}}
y_j\mathrm{e}^{\mathrm{i}jx}$. It is noted that for this norm, we
have $\norm{y}_{s_1}\leq \norm{ y}_{s_2}$ if $s_1\leq s_2.$

Two important  propositions of the nonlinearity $f(y)$ given in
\cite{Gauckler15} are needed in this paper and we summarize  them as
follows.

%

\begin{prop}\label{pro thm 2}(See \cite{Gauckler15})
 Assume that $\sigma,\sigma'\in \mathbb{R}$ with  $\sigma'\geq |\sigma|$ and
$\sigma'\geq 1$. If $\norm{y}_{\sigma'}\leq M$ and
$\norm{z}_{\sigma'}\leq M,$ then we have
\begin{eqnarray}
   \norm{f(y)-f(z)}_{\sigma} &\leq& C \norm{y-z}_{\sigma},   \label{norm pro2-1} 
 \\ \norm{f(y)}_{\sigma'} &\leq& C \label{norm pro2-2}
\end{eqnarray}
with a constant $C$ depending only on $M, |\sigma|, \sigma',$ and
$p$.
\end{prop}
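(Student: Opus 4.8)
The plan is to read both bounds through the dictionary between the weighted sequence norm $\norm{\cdot}_s$ of \eqref{norm} and the Sobolev norm $H^s$: under this identification the discrete convolution $*$ of \eqref{f} is, up to aliasing, the pointwise product of the associated trigonometric polynomials, so \eqref{norm pro2-1} and \eqref{norm pro2-2} become product estimates. The single analytic engine I would isolate is the \emph{multiplier property}
\begin{equation*}
\norm{u*v}_{\sigma}\le C\,\norm{u}_{\sigma'}\norm{v}_{\sigma},\qquad |\sigma|\le\sigma',\ \ \sigma'>\tfrac12,
\end{equation*}
with $C=C(|\sigma|,\sigma')$; its diagonal case $\sigma=\sigma'$ is the Banach-algebra estimate $\norm{u*v}_{\sigma'}\le C\norm{u}_{\sigma'}\norm{v}_{\sigma'}$. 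Granting this engine, the proposition follows quickly.

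For \eqref{norm pro2-2} I would write $f(y)=\underbrace{y*\cdots*y}_{p}$ and iterate the algebra estimate to obtain $\norm{f(y)}_{\sigma'}\le C^{\,p-1}\norm{y}_{\sigma'}^{p}\le C^{\,p-1}M^{p}$, a constant of the asserted form. For \eqref{norm pro2-1} I would use the telescoping identity, legitimate because $*$ is commutative and associative (it is the convolution of $\ZZ/2K\ZZ$),
\begin{equation*}
f(y)-f(z)=\sum_{i=0}^{p-1}\underbrace{y*\cdots*y}_{i}\,*\,(y-z)\,*\,\underbrace{z*\cdots*z}_{p-1-i}.
\end{equation*}
Denoting the $(p-1)$-fold convolution in the $i$-th term by $g_i$, the algebra estimate gives $\norm{g_i}_{\sigma'}\le C\,M^{p-1}$, and the multiplier property applied with $g_i$ in the $\sigma'$-slot and $y-z$ in the $\sigma$-slot yields $\norm{g_i*(y-z)}_{\sigma}\le C M^{p-1}\norm{y-z}_{\sigma}$. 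Summing the $p$ terms gives \eqref{norm pro2-1}.

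The substantive work is the multiplier property itself, which is a standard Sobolev-calculus fact; I would establish it in two moves. First, by weighted-$\ell^2$ duality $\norm{u*v}_{\sigma}=\sup\{|\langle u*v,\phi\rangle|:\norm{\phi}_{-\sigma}\le1\}$, and since $-\sigma=|\sigma|\le\sigma'$ this reduces the case $\sigma<0$ to a multiplier estimate with a \emph{nonnegative} target index $s\in[0,\sigma']$. Second, for $0\le s\le\sigma'$ I would work at the level of Fourier coefficients, using $\langle j\rangle^{s}\le C(\langle k\rangle^{s}+\langle l\rangle^{s})$ for $j=k+l$ and splitting the convolution by frequency. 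In the low$\times$high interaction the output weight $\langle j\rangle^{s}$ is absorbed by the high factor while the low factor is controlled in $\ell^1$ through $\sum_k|u_k|\le C\norm{u}_{\sigma'}$, valid precisely because $\sigma'>\tfrac12$; the remaining high$\times$low and high$\times$high interactions I would treat by the Sobolev embedding $H^{\sigma'}\hookrightarrow L^\infty$ (again $\sigma'>\tfrac12$), the high$\times$high term being exactly where the constraint $\sigma+\sigma'\ge0$ (that is, $\sigma'\ge|\sigma|$) is needed to absorb the two large comparable frequencies. The role of every hypothesis is thus visible.

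The step I expect to be the main obstacle is not the clean continuous product estimate but its \emph{discrete, aliased} version with constants uniform in the discretization parameter $K$. The convolution in \eqref{f} is taken modulo $2K$, so high frequencies are folded back into $\mathcal{K}$. For $\sigma\ge0$ this is harmless: folding a high index onto a low one can only decrease the weight $\langle\cdot\rangle^{2\sigma}$, so the aliased norm is dominated by the non-aliased one, and the non-aliased convolution is genuinely the product of band-limited polynomials, to which the continuous estimate applies with a $K$-independent constant. For $\sigma<0$, however, the same folding \emph{increases} the weight, so the easy domination fails; one must verify that the inflated contributions remain controlled, using the band-limitation of $u,v$ together with $\sigma+\sigma'\ge0$. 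This is where I would concentrate the effort to secure a bound independent of $K$.
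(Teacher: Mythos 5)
The paper does not prove this proposition at all: it is quoted from \cite{Gauckler15} and used as a black box, so there is no internal proof to compare against. Your argument is, in substance, the standard one underlying the cited reference: reduce everything to the asymmetric bilinear estimate $\norm{u*v}_{\sigma}\le C\norm{u}_{\sigma'}\norm{v}_{\sigma}$, obtain \eqref{norm pro2-2} by iterating its diagonal (Banach algebra) case, and obtain \eqref{norm pro2-1} by telescoping, which is legitimate because the aliased convolution in \eqref{f} is the group convolution of $\ZZ/2K\ZZ$ and hence commutative and associative. Two details deserve attention. First, in the frequency splitting for a nonnegative target index $0\le s\le\sigma'$, the interaction in which $u$ carries the dominant frequency cannot be closed by the symmetric Moser/$L^\infty$ estimate you invoke, since that route produces a term $\norm{u}_{s}\norm{v}_{\sigma'}$, whereas the bound you need is asymmetric (in the application $v=y-z$ is controlled only in the weaker norm). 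The correct move is to transfer the excess weight onto the low factor, $\langle k\rangle^{s}\le\langle k\rangle^{\sigma'}\langle l\rangle^{s-\sigma'}$ when $\langle k\rangle\ge\langle l\rangle$, and then apply Young's inequality together with $\sum_{l}\langle l\rangle^{s-\sigma'}|v_l|\le\bigl(\sum_l\langle l\rangle^{-2\sigma'}\bigr)^{1/2}\norm{v}_{s}\le C\norm{v}_{s}$, which is where $\sigma'>\tfrac12$ enters again; this is routine but it is the one place where the asymmetry of the claimed estimate must actually be earned. Second, the aliasing obstacle you single out at the end for $\sigma<0$ is dissolved by your own duality step: once the target index is replaced by $-\sigma=|\sigma|\in[0,\sigma']$, the reduction of $k+l$ modulo $2K$ into $\mathcal{K}$ satisfies $\langle[k+l]\rangle\le\langle k+l\rangle$, so folding only decreases nonnegative weights and the constants are automatically uniform in $K$; no separate argument is needed. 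With these two clarifications the proof is complete, with constants depending only on $M$, $|\sigma|$, $\sigma'$ and $p$ (and indeed $\sigma'>\tfrac12$ suffices in place of the hypothesis $\sigma'\ge1$).
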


\begin{prop}\label{pro thm 3}(See \cite{Gauckler15})
 Let  $s\geq0,$ and assume that for $y:[t_0,t_1]\rightarrow
\mathbb{C}^{\mathcal{K}}$
$$\norm{y(t)}_{s+1}\leq M,\qquad \norm{\dot{y}(t)}_{s}\leq M,\qquad \textmd{for} \qquad t_0\leq t
\leq t_1.$$  It then holds  that
\begin{equation}
\norm{ \frac{d}{d t}f(y(t))}_{s}\leq C,
\label{norm pro3-1}%
\end{equation}
where the constant $C$ depends on $M, s,$ and $p$. If, in addition,
if    $\norm{\ddot{y}(t)}_{s-1}\leq M$  for $t_0\leq t \leq t_1,$
then
\begin{equation}
\norm{ \frac{d^2}{d t^2}f(y(t))}_{s-1} \leq C
\label{norm pro3-2}%
\end{equation}
with a constant $C$ depending on $M, s,$ and $p$.
\end{prop}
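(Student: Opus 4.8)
The plan is to differentiate the convolution nonlinearity explicitly and then group the resulting terms by the number of time derivatives of $y$ they carry. Writing $y^{*q}$ for the $q$-fold convolution of $y$ with itself and recalling that $f(y)=\underbrace{y*\cdots*y}_{p\ \textmd{times}}$, the symmetry and multilinearity of the convolution $*$ give
\begin{equation*}
\frac{d}{dt}f(y)=p\,\dot y * y^{*(p-1)}=Df(y)[\dot y],
\end{equation*}
and, differentiating once more,
\begin{equation*}
\frac{d^2}{dt^2}f(y)=p\,\ddot y * y^{*(p-1)}+p(p-1)\,\dot y * \dot y * y^{*(p-2)}=Df(y)[\ddot y]+D^2f(y)[\dot y,\dot y].
\end{equation*}
The two pieces that are linear in the highest derivative can be controlled by Proposition \ref{pro thm 2}, while the genuinely quadratic piece will require more.

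For \eqref{norm pro3-1} I would read $p\,\dot y * y^{*(p-1)}$ as the directional derivative $Df(y)[\dot y]$ and deduce its bound from the Lipschitz estimate \eqref{norm pro2-1} by a difference-quotient argument: for $\norm{y}_{s+1}\le M$ and any direction $v$, applying \eqref{norm pro2-1} with $\sigma=s$ and $\sigma'=s+1$ to $f(y+\varepsilon v)-f(y)$ and letting $\varepsilon\to0$ yields $\norm{Df(y)[v]}_{s}\le C\norm{v}_{s}$, with $C$ depending only on $M,s,p$; note that $s+1\ge|s|$ and $s+1\ge1$ for $s\ge0$, so the hypotheses of Proposition \ref{pro thm 2} hold. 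Taking $v=\dot y$ and using $\norm{\dot y}_{s}\le M$ closes the first bound. The first term in $\frac{d^2}{dt^2}f(y)$ is treated identically: it equals $Df(y)[\ddot y]$, and the same estimate with $\sigma=s-1$ and $\sigma'=s+1$ (here $s+1\ge|s-1|$ and $s+1\ge1$ for $s\ge0$) gives $\norm{Df(y)[\ddot y]}_{s-1}\le C\norm{\ddot y}_{s-1}\le CM$.

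The crux, and the step I expect to be the main obstacle, is the genuinely quadratic term $p(p-1)\,\dot y * \dot y * y^{*(p-2)}=D^2f(y)[\dot y,\dot y]$, for which \eqref{norm pro2-1} is too weak: being a first-order (Lipschitz) estimate with matched norms on both sides, it would blow up if one tried to extract a second derivative from difference quotients, since $\dot y$ is controlled only in $H^{s}$ while the output must be measured in $H^{s-1}$. I would instead invoke a tame bilinear (Moser-type) estimate that exploits the one-derivative loss built into the target norm $\norm{\cdot}_{s-1}$: in one space dimension $\norm{v*w}_{s-1}\le C\norm{v}_{s}\norm{w}_{s}$ holds for every $s\ge0$ (the Sobolev multiplication $H^{s}\times H^{s}\hookrightarrow H^{s-1}$, which is admissible because $s+1>1/2$), and multiplication by an $H^{s+1}$-function is bounded on $H^{s-1}$, so that
\begin{equation*}
\norm{\dot y * \dot y * y^{*(p-2)}}_{s-1}\le C\,\norm{\dot y}_{s}^{2}\,\norm{y}_{s+1}^{p-2}\le CM^{p},
\end{equation*}
using the algebra estimate $\norm{y^{*(p-2)}}_{s+1}\le C\norm{y}_{s+1}^{p-2}$. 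The delicate regime is $s=0$ (finite energy), where $\dot y$ lies only in $H^{0}=L^{2}$ and the output sits in $H^{-1}$; there the estimate survives precisely because of the one-dimensional embeddings $L^{2}\cdot L^{2}\subset L^{1}\hookrightarrow H^{-1}$ and $H^{1}\hookrightarrow L^{\infty}$, so the loss of one derivative is essential and cannot be recovered from Proposition \ref{pro thm 2} alone.
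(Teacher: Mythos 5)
Your derivative formulas and the splitting of $\frac{d^2}{dt^2}f(y)$ into $Df(y)[\ddot y]+D^2f(y)[\dot y,\dot y]$ are correct, and your diagnosis that the genuinely quadratic term $p(p-1)\,\dot y*\dot y*y^{*(p-2)}$ is the one place where the Lipschitz estimate \eqref{norm pro2-1} cannot help is exactly the crux. Be aware, though, that this paper does not prove Proposition \ref{pro thm 3} at all: it is imported verbatim from \cite{Gauckler15}, so there is no in-paper proof to compare against. What you have written is essentially a reconstruction of the argument in that reference, which likewise bounds the terms linear in the highest derivative by the ``one factor in the high norm'' convolution estimate underlying Proposition \ref{pro thm 2}, and handles the quadratic term by a product estimate that loses one derivative.

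Two steps should be made explicit rather than asserted. First, in your difference-quotient derivation of $\norm{Df(y)[v]}_{\sigma}\leq C\norm{v}_{\sigma}$, Proposition \ref{pro thm 2} requires \emph{both} arguments to be bounded in $H^{s+1}$, whereas $v=\dot y$ (or $\ddot y$) is controlled only in a weaker norm; this is rescued by finite dimensionality of $\mathbb{C}^{\mathcal{K}}$ (for fixed $K$ take $\varepsilon$ so small that $\norm{y+\varepsilon v}_{s+1}\leq 2M$, and observe the limiting constant $C(2M)$ is independent of $K$), but it is cleaner to bound $\norm{v*y^{*(p-1)}}_{\sigma}\leq C\norm{v}_{\sigma}\norm{y}_{s+1}^{p-1}$ directly from the bilinear convolution estimate that underlies Proposition \ref{pro thm 2}. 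Second, the low--low product estimate $\norm{v*w}_{s-1}\leq C\norm{v}_{s}\norm{w}_{s}$ is not a consequence of Propositions \ref{pro thm 2} or \ref{pro thm 3} as stated and must be proved for the aliased discrete convolution with a constant uniform in $K$; for $s=0$ this is the elementary Cauchy--Schwarz bound $|(v*w)_j|\leq\norm{v}_{0}\norm{w}_{0}$ combined with $\sum_j\langle j\rangle^{-2}<\infty$, and the case $0<s\leq\tfrac12$ needs the corresponding trilinear (duality) argument. With those two points supplied, your proof is complete and matches the route of the cited source.
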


\section{Main result}\label{sec:main result}

Before presenting the error bounds, we need the following
assumptions of the coefficients of the ERKN integrators. Similar
assumptions  on the filter functions of some
 trigonometric methods have
been considered in \cite{Gauckler15}.

\begin{assum}\label{thm ass1}
It is  assumed that there exists a constant $c$ such that
\begin{eqnarray}
   |\bar{b}_1(\xi^2)|\leq& c \xi^{\beta}  \qquad &\textmd{if} \qquad -1\leq\beta\leq 0, \label{ass1} 
 \\   |1/2 \textmd{sinc}^2(\xi/2)-\bar{b}_1(\xi^2)|\leq&c\xi^{\beta} \qquad &\textmd{if} \qquad 0 <\beta\leq 1,\label{ass2}
 \\   |1-b_1(\xi^2)|\leq&c \xi^{1+\beta} \label{ass3}
\end{eqnarray}
for  $-1\leq\beta\leq 1$ and all $\xi = h\omega_j$ with $j
\in\mathcal{K}$ and $\omega_j\neq0$. Moreover, we assume that
$c_1=\frac{1}{2}$ for the   ERKN integrator \eqref{methods}.
\end{assum}

It can be  verified  that all the  ERKN integrators displayed in
Table \ref{praERKN} satisfy this assumption uniformly for
$-1\leq\beta\leq 1$ and $h>0$.

Under this assumption, we have the following property with respect
to the norm \eqref{norm}.
\begin{prop}\label{pro norm }
Under the conditions of Assumption \ref{thm ass1} and for  $s\in
\mathbb{R}$, it holds that
\begin{equation*}\begin{array}[c]{ll}%
&\norm{\bar{b}_1(V)y}_{s-\beta}\leq ch^{\beta}\norm{ y}_{s},\qquad
\qquad \qquad \qquad \quad \
\textmd{if} \quad -1\leq\beta\leq 0,\\
&\norm{\big(1/2\textmd{sinc}^2(h\Omega/2)-\bar{b}
_1(V)\big)y}_{s-\beta}\leq ch^{\beta}\norm{ y}_{s},\quad
\textmd{if}  \quad 0 <\beta\leq 1,\\
&\norm{y-b_1(V)y}_{s-\beta}\leq ch^{1+\beta}\norm{ y}_{s+1}.
\end{array}
\end{equation*}

\end{prop}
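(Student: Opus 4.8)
The plan is to exploit that every operator appearing in the three inequalities is a function of the diagonal matrix $\Omega$ and is therefore itself diagonal, so that the whole statement decouples into scalar estimates on the individual Fourier modes. Writing $\bar{b}_1(V)=\diag(\bar{b}_1(h^2\omega_j^2))_{j\in\mathcal{K}}$ and likewise for $b_1(V)$ and $\sinc^2(h\Omega/2)$, and recalling the definition \eqref{norm} of $\norm{\cdot}_s$, each inequality reduces to a pointwise bound of the form $(\text{multiplier at mode }j)\le c\,h^{\gamma}\langle j\rangle^{\gamma}$ for the relevant exponent $\gamma$ ($\gamma=\beta$ in the first two cases, $\gamma=1+\beta$ in the third). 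Concretely, for the first inequality I would compute
\[
\norm{\bar{b}_1(V)y}_{s-\beta}^2=\sum_{j\in\mathcal{K}}\langle j\rangle^{2(s-\beta)}\,|\bar{b}_1(h^2\omega_j^2)|^2\,|y_j|^2,
\]
so that the target $c^2h^{2\beta}\norm{y}_s^2$ follows term by term as soon as $|\bar{b}_1(h^2\omega_j^2)|\le c\,h^{\beta}\langle j\rangle^{\beta}$ is established for each $j$.

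The second step is to feed Assumption \ref{thm ass1} into these pointwise bounds. For a nonzero mode I set $\xi=h\omega_j$; since $\omega_j=|j|\ge1$ we have $\omega_j=\langle j\rangle$, and hence \eqref{ass1} gives $|\bar{b}_1(\xi^2)|\le c\xi^{\beta}=c\,h^{\beta}\langle j\rangle^{\beta}$, which is exactly the estimate the first inequality needs. The identity $\omega_j=\langle j\rangle$ for $j\neq0$ is the only place where the precise form $\langle j\rangle=\max(1,|j|)$ is used. The remaining two inequalities are handled in the same way, substituting $\xi=h\langle j\rangle$ into \eqref{ass2} and \eqref{ass3}; for the third one the exponent arithmetic $2(s+1)-2(s-\beta)=2(1+\beta)$ shows that \eqref{ass3} delivers precisely $|1-b_1(\xi^2)|\le c\,h^{1+\beta}\langle j\rangle^{1+\beta}$, matching the weight $\langle j\rangle^{2(s-\beta)}$ against $\langle j\rangle^{2(s+1)}$.

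The one genuine subtlety, and the step I would treat most carefully, is the zero mode $j=0$, because $\omega_0=0$ and Assumption \ref{thm ass1} is stated only for $\omega_j\neq0$. Here $\langle 0\rangle=1$, so the $j=0$ contribution to the left-hand side is just the scalar multiplier times $|y_0|^2$. For the first inequality this is harmless: $\bar{b}_1(0)$ is a fixed finite number, and for $h$ in the bounded range relevant to the convergence analysis the factor $h^{\beta}$ (with $\beta\le0$) is bounded below, so the term is absorbed after enlarging $c$. For the second and third inequalities the multipliers at $j=0$ are $\tfrac12-\bar{b}_1(0)$ and $1-b_1(0)$, while the right-hand sides carry the positive powers $h^{\beta}$ ($\beta>0$) and $h^{1+\beta}$; consistency of the integrators forces the values $\bar{b}_1(0)=\tfrac12$ and $b_1(0)=1$, which indeed hold for every method in Table \ref{praERKN} (since $\phi_0(0)=\phi_1(0)=1$ and $\phi_2(0)=\tfrac12$). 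Thus the $j=0$ term vanishes identically in these two cases, and the surviving sum over $j\neq0$ is controlled by the componentwise bounds of the previous step, completing all three estimates.
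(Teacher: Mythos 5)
Your proof is correct and follows essentially the same route as the paper: reduce each estimate to a componentwise bound on the diagonal multipliers via the definition \eqref{norm} of $\norm{\cdot}_s$, then feed in the pointwise bounds of Assumption \ref{thm ass1} with $\xi=h\omega_j=h\langle j\rangle$ for $j\neq 0$. Your explicit handling of the zero mode (where $\omega_0=0$ and the assumption is silent), absorbing $\bar{b}_1(0)$ into $c$ when $\beta\leq 0$ and using $\bar{b}_1(0)=\tfrac{1}{2}$, $b_1(0)=1$ when the power of $h$ is positive, is a genuine subtlety that the paper's one-line proof passes over, and your resolution is the right one.
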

\begin{proof} For the first one, according to the definition of  the norm \eqref{norm}, one has
\begin{equation*}
\begin{array}
[c]{ll}%
&\norm{\bar{b}_1(V)y}_{s-\beta}=\Big(\sum\limits_{ j \in\mathcal{K}}
\langle
j\rangle ^{2s-2\beta}|\bar{b}_1(h^2\omega^2_j) y_j|^2 \Big)^{1/2}\\
\leq&c\Big(\sum\limits_{ j \in\mathcal{K}} \langle j\rangle
^{2s-2\beta}| (h\omega_j)^{\beta} y_j|^2 \Big)^{1/2}\leq
c\Big(\sum\limits_{ j \in\mathcal{K}} \langle
j\rangle ^{2s}|  h ^{\beta} y_j|^2 \Big)^{1/2}\\
\leq& ch ^{\beta}\Big(\sum\limits_{ j \in\mathcal{K}} \langle
j\rangle ^{2s}|   y_j|^2 \Big)^{1/2}=ch^{\beta}\norm{ y}_{s}.\\
\end{array}
\end{equation*}
Other results can be obtained in a similar way. \end{proof}

The main result of this paper is given by the following theorem.
 \begin{theo}\label{thm main-re}
Assume that  the following  three conditions are satisfied:
\begin{itemize}
\item Let $c\geq 1$ and $s\geq 0$.

\item  The exact solution
$(y(t), \dot{y}(t))$ of the spatial semidiscretization \eqref{prob}
   satisfies
\begin{equation}\norm{y(t)}_{s+1}+ \norm{\dot{y}(t)}_{s}\leq M \qquad \textmd{for} \qquad 0\leq
t-t_0 \leq T.\label{thm con1}%
\end{equation}

\item Assumption \ref{thm ass1} is true with constant $c$ for $\beta= 0$
and $\beta= \alpha$ with   $-1 \leq\alpha\leq 1$.

\end{itemize}
Then, there exists $h_0 > 0$ such that for  $0<h \leq h_0$, the
error bound for the numerical solution $(y^n, \dot{y}^n)$ given by
the ERKN integrator \eqref{methods} is
$$\norm{y(t_n)-y^n}_{s+1-\alpha}+
\norm{\dot{y}(t_n)-\dot{y}^n}_{s-\alpha}\leq Ch^{1+\alpha} \qquad
\textmd{for} \qquad 0\leq t_n-t_0=nh \leq T,$$  where the constants
$C$ and $h_0$ depend only on $M$ and $s$ from \eqref{thm con1}, the
power $p$ of the nonlinearity in \eqref{wave equa}, the final time
$T$, and the constant $c$ in Assumption \ref{thm ass1}.
\end{theo}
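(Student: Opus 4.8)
The plan is to realize the two-stage programme announced in the introduction: an Alekseev--Gr\"obner (Lady Windermere's fan) decomposition of the global error, closed by a discrete Gr\"onwall inequality, in which the consistency estimates are supplied by Assumption~\ref{thm ass1} together with Propositions~\ref{pro thm 2}--\ref{pro norm }. The key structural observation is that, under the identities $\phi_0(V)=\cos(h\Omega)$ and $\phi_1(V)=\sinc(h\Omega)$, the linear part of the one-stage scheme \eqref{methods} coincides exactly with the exact propagator $R(h)$ of \eqref{matr R}. Hence, applying one ERKN step to the exact data $(y(t_n),\dot y(t_n))$ and subtracting the variation-of-constants formula \eqref{vari cons} on $[t_n,t_{n+1}]$, the local error originates solely from the nonlinearity; its velocity component, after inserting the internal stage $\tilde y^{\,n+c_1}=\phi_0(c_1^2V)y(t_n)+hc_1\phi_1(c_1^2V)\dot y(t_n)$ built from exact data, splits as
\begin{align*}
\delta^{\mathrm{vel}}_{n}={}&\int_{t_n}^{t_{n+1}}\bigl[\cos((t_{n+1}-\tau)\Omega)-I\bigr]f(y(\tau))\,\dd\tau
+\Bigl[\int_{t_n}^{t_{n+1}}f(y(\tau))\,\dd\tau-hf(y(t_n+\tfrac h2))\Bigr]\\
&+h\bigl[f(y(t_n+\tfrac h2))-f(\tilde y^{\,n+c_1})\bigr]+h\bigl[I-b_1(V)\bigr]f(\tilde y^{\,n+c_1}),
\end{align*}
and the position component admits an analogous splitting with $\sinc$ and $\bar b_1(V)$ in place of $\cos$ and $b_1(V)$, using $\phi_2(V)=\tfrac12\sinc^2(h\Omega/2)$ as the reference weight.

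First I would settle the case $\alpha=0$, which simultaneously yields the regularity of the numerical flow. In the strong phase-space norm $\norm{\cdot}_{s+1}+\norm{\cdot}_s$, the index $s+1\ge1$ makes $f$ locally Lipschitz through \eqref{norm pro2-1}, so a bootstrap induction applies: assuming $\norm{y^m}_{s+1}+\norm{\dot y^m}_s\le 2M$ for $m\le n$, each local error is $\mathcal{O}(h^2)$ in this norm -- the factors $\cos((t_{n+1}-\tau)\Omega)$ and $\sinc$ are $\mathcal{O}(1)$, the one-point quadrature error is $\mathcal{O}(h^2)$ because $\tfrac{\dd}{\dd t}f(y(t))$ is bounded by \eqref{norm pro3-1}, the stage error is $\mathcal{O}(h^2)$ by \eqref{norm pro2-1}, and the $\beta=0$ instance of Assumption~\ref{thm ass1} controls the remaining filter terms. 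Since $R(h)$ is bounded on $H^{s+1}\times H^s$ uniformly in $h$ and $K$ (its entries are bounded functions of $\Omega$), summing these local errors and invoking the discrete Gr\"onwall inequality gives $\norm{y(t_n)-y^n}_{s+1}+\norm{\dot y(t_n)-\dot y^n}_s\le Ch$; in particular $\norm{y^n}_{s+1}\le M+Ch\le 2M$, which both closes the induction and supplies the sought regularity of the numerical trajectory.

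With the numerical solution confined to a ball of radius $2M$ in $H^{s+1}$, I would treat general $\alpha\in[-1,1]$ by repeating the fan argument in the weaker norm $\norm{\cdot}_{s+1-\alpha}+\norm{\cdot}_{s-\alpha}$. The local Lipschitz bound \eqref{norm pro2-1}, now legitimately invoked with $\sigma=s-\alpha$ and $\sigma'=s+1$ (so that $\sigma'\ge|\sigma|$ and $\sigma'\ge1$), propagates the errors, and it remains to bound each contribution in the displayed splitting to order $h^{2+\alpha}$. The fourth velocity term is $\le h\,ch^{1+\alpha}\norm{f(\tilde y^{\,n+c_1})}_{s+1}\le Ch^{2+\alpha}$ by the third estimate of Proposition~\ref{pro norm } (i.e.\ \eqref{ass3}) and the bound \eqref{norm pro2-2}; the first term is $\mathcal{O}(h^{2+\alpha})$ because $|\cos(\theta\omega_j)-1|\le C(\theta\omega_j)^{1+\alpha}$ for $-1\le\alpha\le1$, again through \eqref{norm pro2-2}; the stage term is $\mathcal{O}(h^{3})$ by \eqref{norm pro2-1}. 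The midpoint quadrature term is the pivot: it is $\mathcal{O}(h^{2})$ in $\norm{\cdot}_s$ by \eqref{norm pro3-1}, and this anchor is interpolated toward $\norm{\cdot}_{s-1}$ (order $h^{3}$, via \eqref{norm pro3-2}) when $\alpha>0$ and toward $\norm{\cdot}_{s+1}$ (order $h$, via \eqref{norm pro2-2}) when $\alpha<0$, giving $\mathcal{O}(h^{2+\alpha})$ in $\norm{\cdot}_{s-\alpha}$ throughout. The position component is handled in the same spirit, the three inequalities of Proposition~\ref{pro norm } matching the conditions \eqref{ass1}--\eqref{ass3}: for $\alpha>0$ one compares $\bar b_1(V)$ to the reference $\phi_2(V)$ via \eqref{ass2}, while for $\alpha\le0$ one bounds $\bar b_1(V)$ directly via \eqref{ass1} together with the smoothing of $\sin(\theta\Omega)/\Omega$. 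Summing the $N=T/h$ local errors of size $h^{2+\alpha}$ and applying discrete Gr\"onwall yields the claimed bound $Ch^{1+\alpha}$.

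The main obstacle is the pivot between the two stages. For $\alpha>0$ the nonlinearity $f$ is not Lipschitz on $H^{s+1-\alpha}$, so the error recursion cannot be closed there directly; this is overcome precisely by the $\alpha=0$ step, whose high-norm regularity licenses \eqref{norm pro2-1} along the entire discrete trajectory. Complementarily, for $\alpha\ne0$ neither the exact solution $y(t_n)$ (a solution of the \emph{finite} system \eqref{prob}, bounded in $H^{s+1}$ only uniformly in $K$) nor the numerical value $y^n$ is individually controlled in the norm $H^{s+1-\alpha}$; the assertion is really that their \emph{difference} is regular, and this can be extracted only from the smoothing and order structure, not from any a priori bound on the two terms separately. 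The technical heart is therefore the two-sided interpolation of the midpoint quadrature term -- anchored at the first-derivative bound \eqref{norm pro3-1} in $\norm{\cdot}_s$ and swung toward \eqref{norm pro3-2} or \eqref{norm pro2-2} according to the sign of $\alpha$ -- together with the exact matching of every remaining term to one of the order conditions \eqref{ass1}--\eqref{ass3}, so that the ``missing'' regularity of the exact solution is consistently converted into the fractional power $h^{2+\alpha}$ rather than demanded of $y(t)$.
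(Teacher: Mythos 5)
Your proposal is correct and follows essentially the same route as the paper: the same four-term splitting of the local error (oscillatory factor minus identity, midpoint quadrature error via Peano kernels, internal-stage error, and the filter defects matched to \eqref{ass1}--\eqref{ass3}), combined with a stability estimate and induction (Lady Windermere's fan with discrete Gr\"onwall), and the same pivotal two-stage structure in which the $\alpha=0$ case first delivers the $2M$-bound on the numerical trajectory in $H^{s+1}\times H^s$, which then licenses the local Lipschitz bound \eqref{norm pro2-1} needed for $0<\alpha\leq1$. The only cosmetic difference is that you interpolate the midpoint term between norms in a single unified argument for all $\alpha\in[-1,1]$, whereas the paper separates the cases $-1\leq\alpha\leq0$ and $0<\alpha\leq1$ into two sections; the substance is identical.
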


\begin{rem}
It is noted that if we consider $s = 0$ in this theorem, the
condition \eqref{thm con1} presents a finite energy assumption on
the solution of the nonlinear wave equation \eqref{wave equa} and
its spatial semidiscretization \eqref{prob}. By choosing $\alpha =
1$ and $\alpha = 0$, we get $\norm{y(t_n)-y^n}_{0}\leq Ch^{2}$ and
$\norm{\dot{y}(t_n)-\dot{y}^n}_{0}\leq Ch$, respectively. This leads
to
 a second-order error bound for $y$ in
$H^0=L^2$ and a first-order error bound for $\dot{y}$ in $L^2$.
\end{rem}

By the two-stage arguments used in
\cite{Gauckler15,Gauckler11,Koch11,Lubich08,Thalhammer12}, the proof
of  Theorem \ref{thm main-re} will be divided into two parts. We
will first  show   the proof of the lower-order error bounds in
higher-order Sobolev spaces (i.e., $-1 \leq\alpha\leq 0$) in Section
\ref{sec-prof1}, and then present the proof of the higher-order
error bounds in lower-order Sobolev spaces (i.e.,
 $0<\alpha\leq 1$) in Section \ref{sec-prof2}.

\section{Proof of the lower-order error bounds in higher-order
Sobolev spaces}\label{sec-prof1} Throughout the proof,  we assume
that $0<h \leq 1$ and use the norm
$$|||(y,\dot{y})|||_{\sigma}=(\norm{y}^2_{\sigma+1}+\norm{\dot{y}}^2_{\sigma})^{1/2}$$
on $H^{\sigma+1}\times H^{\sigma}$ for  $\sigma\in \mathbb{R}$.  The
exact solution \eqref{vari cons} of the system \eqref{prob}  is
denoted by $(y(t), \dot{y}(t))$ and  the numerical results of ERKN
integrators \eqref{methods} are denoted by $(y^0, \dot{y}^0),(y^1,
\dot{y}^1),\ldots$.

\subsection{Regularity over one time step} We first show  the preservation of regularity of
the ERKN integrator \eqref{methods} over one time step.

\begin{lem}\label{lem-regularity}
Let $s\geq0$ and  $-1 \leq\alpha\leq 0$. It is assumed that
Assumption \ref{thm ass1} holds for $\beta=\alpha$   with constant
$c$, and $|||(y^0,\dot{y}^0)|||_{s}\leq M,$ then for the solution
given by the  ERKN integrator \eqref{methods}, we have
$$|||(y^1,\dot{y}^1)|||_{s}  \leq C,$$ where $C$ depends only on $M,
s, p,$ and $c$.
\end{lem}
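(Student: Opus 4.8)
The plan is to bound the two components separately, estimating $\norm{y^1}_{s+1}$ and $\norm{\dot y^1}_{s}$ term by term from the three summands in \eqref{methods}, and then combine them into $|||(y^1,\dot y^1)|||_s$. The only genuinely nonlinear ingredient is the boundedness of $f$, so I would first dispose of the internal stage $y^{0+c_1}$. Since $\phi_0(c_1^2V)=\cos(c_1h\Omega)$ has all multipliers bounded by $1$ and the multipliers of $hc_1\phi_1(c_1^2V)$ equal $\sin(c_1h\omega_j)/\omega_j$, which satisfy $\langle j\rangle\,|hc_1\sinc(c_1h\omega_j)|\le 1$, I obtain $\norm{y^{0+c_1}}_{s+1}\le 2M=:M'$. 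Because $s+1\ge 1$, Proposition \ref{pro thm 2}, estimate \eqref{norm pro2-2}, then yields $\norm{f(y^{0+c_1})}_{s+1}\le C$, which is the key bound used throughout.

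For the position component I would estimate the three terms of $y^1$ in $\norm{\cdot}_{s+1}$. The linear propagator terms are unconditional: $\norm{\phi_0(V)y^0}_{s+1}\le\norm{y^0}_{s+1}\le M$ from $|\cos(h\omega_j)|\le 1$, and $\norm{h\phi_1(V)\dot y^0}_{s+1}\le\norm{\dot y^0}_{s}\le M$ using $\langle j\rangle\,|h\sinc(h\omega_j)|\le 1$, since the multiplier $\sin(h\omega_j)/\omega_j$ supplies the one power of $\langle j\rangle$ needed to pass from the index $s$ to $s+1$. For the nonlinear correction I would invoke the first bound of Proposition \ref{pro norm } with $\beta=\alpha\le 0$, giving $\norm{h^2\bar b_1(V)f(y^{0+c_1})}_{s+1}\le ch^{2+\alpha}\norm{f(y^{0+c_1})}_{s+1+\alpha}\le ch^{2+\alpha}\norm{f(y^{0+c_1})}_{s+1}\le cC$, where I used norm monotonicity ($\alpha\le 0$) and $h^{2+\alpha}\le 1$. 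Hence $\norm{y^1}_{s+1}\le C$.

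For the velocity component I would estimate the three terms of $\dot y^1$ in $\norm{\cdot}_{s}$. Again the linear parts are unconditional: $\norm{\phi_0(V)\dot y^0}_{s}\le M$, while $\norm{h\Omega^2\phi_1(V)y^0}_{s}\le\norm{y^0}_{s+1}\le M$ because the multiplier $\omega_j\sin(h\omega_j)$ is bounded by $\omega_j\le\langle j\rangle$, which is exactly the power converting the index $s+1$ into $s$. For the remaining term I would write $b_1(V)=I-(I-b_1(V))$ and split $\norm{hb_1(V)f}_{s}\le h\norm{f}_{s}+\norm{h(I-b_1(V))f}_{s}$; the first piece is $\le\norm{f}_{s+1}\le C$, and the third bound of Proposition \ref{pro norm } (with $\beta=\alpha$) controls the second by $ch^{2+\alpha}\norm{f}_{s+1}\le cC$. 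Thus $\norm{\dot y^1}_{s}\le C$, and combining the two components gives $|||(y^1,\dot y^1)|||_{s}\le C$.

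The estimates are elementary; the only point requiring care is the interplay between the negative smoothing index $\alpha$ and the powers of $h$. The nonlinearity $f$ must be evaluated at a Sobolev index at least $1$ for Proposition \ref{pro thm 2} to apply, which is why the internal stage is controlled in $H^{s+1}$ and the index $s+1+\alpha$ appearing in the coefficient estimates is raised to $s+1$ by monotonicity, legitimate precisely because $\alpha\le 0$. One must likewise check that no term degenerates as $h\to 0$: the smallest total power of $h$ produced by the two coefficient terms is $h^{2+\alpha}$ with $2+\alpha\ge 1$, so all contributions stay bounded on $0<h\le 1$. Since Proposition \ref{pro norm } already absorbs the behaviour of $\bar b_1$ and $b_1$ at the zero mode $\omega_j=0$, no separate treatment of that mode is needed.
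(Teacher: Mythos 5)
Your proposal is correct and follows essentially the same route as the paper: bound the internal stage in $H^{s+1}$ via the elementary multiplier estimates for $\cos$ and $\sinc$, apply \eqref{norm pro2-2} to get $\norm{f(y^{0+c_1})}_{s+1}\leq C$, and then control the nonlinear corrections through the $\bar b_1$ and $b_1$ bounds of Assumption \ref{thm ass1} with $\beta=\alpha\leq 0$, using norm monotonicity and $h^{2+\alpha}\leq 1$. The only difference is that you write out the velocity component explicitly (including the $h\Omega^2\phi_1(V)$ multiplier and the $I-(I-b_1(V))$ splitting), which the paper dismisses with ``similarly''; your version is a faithful elaboration of that step.
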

\begin{proof}
 From  the definition of the integrator \eqref{methods},
 it follows that
$$\norm{y^{\frac{1}{2}}}_{s+1} \leq \norm{\cos\big(\frac{1}{2}h\Omega\big)y^{0}}_{s+1}+
h\frac{1}{2}\norm{\textmd{sinc}\big(\frac{1}{2}h\Omega\big)\dot{y}^{0}}_{s+1}.$$
Using $\textmd{sinc}(0)=1\leq h^{-1}$ and the bound
$|\textmd{sinc}(\xi)| \leq \xi^{-1}$ for $\xi>0$, we obtain
$$\norm{y^{\frac{1}{2}}}_{s+1} \leq \norm{ y^{0}}_{s+1}+
 \norm{ \dot{y}^{0}}_{s}\leq 2M,$$
which gives
\begin{equation}\norm{f(y^{\frac{1}{2}})}_{s+1} \leq C\label{fhaf1 bound}%
\end{equation}
by considering \eqref{norm pro2-2} with $\sigma'=s+1.$

 On the basis of  the scheme of  the integrator \eqref{methods}
again, it can be obtained that
$$\norm{y^1}_{s+1} \leq \norm{\cos( h\Omega)y^{0}}_{s+1}+
h \norm{\textmd{sinc}(
h\Omega)\dot{y}^{0}}_{s+1}+h^2\norm{\bar{b}_1( h\Omega)f(y^{
\frac{1}{2}})}_{s+1}.$$ In the light of the fact that $-1
\leq\alpha\leq 0 $ and the bound \eqref{ass1} of $\bar{b}_1$, one
arrives
$$\norm{y^1}_{s+1} \leq \norm{ y^{0}}_{s+1}+
 \norm{ \dot{y}^{0}}_{s}+h^{2+\alpha}\norm{f(y^{ \frac{1}{2}})}_{s+1+\alpha}
 \leq \norm{ y^{0}}_{s+1}+
 \norm{ \dot{y}^{0}}_{s}+h^{2+\alpha}\norm{f(y^{ \frac{1}{2}})}_{s+1}.$$
It follows from   \eqref{fhaf1 bound} that $\norm{y^1}_{s+1}  \leq
C.$ Similarly, we can obtain  $\norm{\dot{y}^1}_{s}  \leq C$, and
then the proof is complete.
\end{proof}

\subsection{Local error bound}
In what follows, we consider the local error of the ERKN integrator
\eqref{methods}.

\begin{lem}\label{lem-local error}
(Local error in $H^{s+1-\alpha}\times H^{s-\alpha}$ for $-1
\leq\alpha\leq 0$.) Under the conditions of Lemma
\ref{lem-regularity}, if $$|||(y(\tau),\dot{y}(\tau))|||_{s}\leq M$$
for $t_0\leq \tau \leq t_1,$ then  we have
$$|||(y(t_1),\dot{y}(t_1))-(y^1,\dot{y}^1)|||_{s-\alpha}\leq Ch^{2+\alpha},$$
where the constant $C$ depends only on $M, s, p,$ and $c$.
\end{lem}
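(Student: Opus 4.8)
The plan is to estimate the one-step error by comparing the ERKN map \eqref{methods} with the variation-of-constants representation \eqref{vari cons} of the exact solution over $[t_0,t_1]$, both started from the same exact value $(y^0,\dot y^0)=(y(t_0),\dot y(t_0))$. The first observation I would make is that the linear (homogeneous) parts coincide exactly: in \eqref{matr R} the propagator $R(h)$ acts through $\cos(h\Omega)$, $h\,\textmd{sinc}(h\Omega)$ and $-\Omega\sin(h\Omega)=-h\Omega^2\textmd{sinc}(h\Omega)$, which are precisely the operators $\phi_0(V)$, $h\phi_1(V)$, $-h\Omega^2\phi_1(V)$ of \eqref{methods}. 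Hence the whole local error originates from the nonlinearity, and after writing the relevant entries of $R$ as $\Omega^{-1}\sin((t_1-\tau)\Omega)$ and $\cos((t_1-\tau)\Omega)$ it becomes
\begin{align*}
y(t_1)-y^1&=\int_{t_0}^{t_1}\Omega^{-1}\sin((t_1-\tau)\Omega)f(y(\tau))\,d\tau-h^2\bar b_1(V)f(y^{1/2}),\\
\dot y(t_1)-\dot y^1&=\int_{t_0}^{t_1}\cos((t_1-\tau)\Omega)f(y(\tau))\,d\tau-hb_1(V)f(y^{1/2}),
\end{align*}
where $y^{1/2}$ is the internal stage generated from $(y(t_0),\dot y(t_0))$ (satisfying $\norm{y^{1/2}}_{s+1}\le 2M$ as in Lemma \ref{lem-regularity}, so that $\norm{f(y^{1/2})}_{s+1}\le C$ by Proposition \ref{pro thm 2}).

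Next I would freeze the nonlinearity at the stage value. Using the kernel identities $\int_{t_0}^{t_1}\Omega^{-1}\sin((t_1-\tau)\Omega)\,d\tau=h^2\phi_2(V)$ and $\int_{t_0}^{t_1}\,d\tau=hI$, each error splits into a quadrature part and a filter part,
\begin{align*}
y(t_1)-y^1&=\int_{t_0}^{t_1}\Omega^{-1}\sin((t_1-\tau)\Omega)\big[f(y(\tau))-f(y^{1/2})\big]d\tau+h^2\big(\phi_2(V)-\bar b_1(V)\big)f(y^{1/2}),\\
\dot y(t_1)-\dot y^1&=\int_{t_0}^{t_1}\big[\cos((t_1-\tau)\Omega)f(y(\tau))-f(y^{1/2})\big]d\tau+h\big(I-b_1(V)\big)f(y^{1/2}).
\end{align*}
For the filter parts I would invoke Assumption \ref{thm ass1}: the pointwise bounds $|\phi_2(\xi^2)|=|(1-\cos\xi)/\xi^2|\le C\xi^{\alpha}$ and $|\bar b_1(\xi^2)|\le c\xi^{\alpha}$ from \eqref{ass1} (valid since $-1\le\alpha\le0$), together with the computation behind Proposition \ref{pro norm }, yield $\norm{h^2(\phi_2(V)-\bar b_1(V))f(y^{1/2})}_{s+1-\alpha}\le Ch^{2+\alpha}$; likewise $|1-b_1(\xi^2)|\le c\xi^{1+\alpha}$ from \eqref{ass3} gives $\norm{h(I-b_1(V))f(y^{1/2})}_{s-\alpha}\le Ch^{2+\alpha}$. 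Note that here I rely only on the conditions available for negative $\beta$, treating $\phi_2(V)$ and $\bar b_1(V)$ separately rather than through \eqref{ass2}.

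The quadrature parts are where the real work lies. Using that $\Omega^{-1}\sin((t_1-\tau)\Omega)$ maps $H^{s-\alpha}$ into $H^{s+1-\alpha}$ with norm $\le1$ (a gain of one derivative) and $\cos((t_1-\tau)\Omega)$ is bounded on $H^{s-\alpha}$, the $y$-quadrature term and the first piece of the $\dot y$-quadrature term are both controlled by $\int_{t_0}^{t_1}\norm{f(y(\tau))-f(y^{1/2})}_{s-\alpha}\,d\tau$; by the local Lipschitz estimate \eqref{norm pro2-1} (with $\sigma=s-\alpha$, $\sigma'=s+1$, using the a priori $H^{s+1}$-bounds on $y(\tau)$ and $y^{1/2}$) this reduces to $\int_{t_0}^{t_1}\norm{y(\tau)-y^{1/2}}_{s-\alpha}\,d\tau$. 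Splitting $y(\tau)-y^{1/2}=\big(y(\tau)-y(t_0+h/2)\big)+\big(y(t_0+h/2)-y^{1/2}\big)$, the second difference is $O(h^2)$ in $H^{s+1}$ (it is the Duhamel integral over the half step). The main obstacle is the first difference in the norm $H^{s-\alpha}$: for $\alpha<0$ this norm is stronger than the regularity directly available, since the hypothesis gives only $\norm{\dot y}_{s}\le M$, controlling time differences of $y$ at order $O(h)$ in $H^{s}$ but merely at order $O(1)$ in $H^{s+1}$. I would resolve this by interpolation in the Sobolev index: since $\norm{v}_{s-\alpha}\le\norm{v}_{s}^{1+\alpha}\norm{v}_{s+1}^{-\alpha}$ for $-1\le\alpha\le0$ (Hölder on the weighted sum \eqref{norm}), the two bounds combine to $\norm{y(\tau)-y(t_0+h/2)}_{s-\alpha}\le (Ch)^{1+\alpha}C^{-\alpha}=Ch^{1+\alpha}$, so that integrating over the step gives $O(h^{2+\alpha})$ for both quadrature pieces. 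Finally, the remaining piece $\int_{t_0}^{t_1}(\cos((t_1-\tau)\Omega)-I)f(y^{1/2})\,d\tau$ is handled by the pointwise bound $|1-\cos\xi|\le C\xi^{1+\alpha}$, which again yields $O(h^{2+\alpha})$. Collecting the two components produces $|||(y(t_1),\dot y(t_1))-(y^1,\dot y^1)|||_{s-\alpha}\le Ch^{2+\alpha}$, as claimed.
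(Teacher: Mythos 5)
Your proof is correct, but it follows a genuinely different route from the paper's in both components, so let me compare. For $y(t_1)-y^1$ the paper does something much cruder for this range of $\alpha$: it bounds the Duhamel integral and the $h^2\bar b_1(V)f(y^{1/2})$ term \emph{separately}, each by $Ch^{2+\alpha}$, using only $|(t_1-\tau)\,\mathrm{sinc}((t_1-\tau)\xi)|\le (t_1-\tau)^{1+\alpha}\xi^{\alpha}$ and \eqref{ass1} — no cancellation between the integral and the filter is needed when $-1\le\alpha\le 0$. Your decomposition (subtracting $f(y^{1/2})$ under the integral and comparing $\phi_2(V)=\tfrac12\mathrm{sinc}^2(h\Omega/2)$ with $\bar b_1(V)$) is exactly the structure the paper reserves for the harder case $0<\alpha\le1$ in Lemma \ref{lem-local error-2}; it costs a little more here but is uniform across the whole range of $\alpha$. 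For $\dot y(t_1)-\dot y^1$ the divergence is more substantial: the paper inserts the exact midpoint value $y\big(\tfrac{t_0+t_1}{2}\big)$, treats $\int f(y(\tau))\,d\tau-hf\big(y(\tfrac{t_0+t_1}{2})\big)$ as a midpoint quadrature error via the first-order Peano kernel and Proposition \ref{pro thm 3}, and only then compares $y\big(\tfrac{t_0+t_1}{2}\big)$ with $y^{1/2}$. You instead freeze directly at the stage value and control $\norm{y(\tau)-y^{1/2}}_{s-\alpha}$ pointwise in $\tau$ by the Sobolev interpolation inequality $\norm{v}_{s-\alpha}\le\norm{v}_{s}^{1+\alpha}\norm{v}_{s+1}^{-\alpha}$, a device that appears nowhere in the paper but is valid here (Hölder on the weighted sums defining \eqref{norm}, with the endpoints $\alpha=0,-1$ trivial) and legitimately yields the $h^{1+\alpha}$ per unit time that integrates to $h^{2+\alpha}$. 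This buys you a proof that bypasses Proposition \ref{pro thm 3} and the Peano kernel entirely; the price is that the shortcut exploits $2+\alpha\le 2$, so it does not extend to the companion lemma for $0<\alpha\le1$, where the genuine second-order gain of the symmetric midpoint rule (and hence the paper's Peano-kernel argument) becomes indispensable. One small point worth making explicit if you write this up: your operator bounds for $\Omega^{-1}\sin((t_1-\tau)\Omega)$ and $h^2\phi_2(V)$ need the separate (trivial) check at the zero mode $j=0$, where $\omega_j=0$ and Assumption \ref{thm ass1} is silent.
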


\begin{proof}Throughout the proof,   denote by $C$ a generic constant which depends on
$M, s, p,$ and $c$.

\vskip2mm \textbf{(I) } We first analyse the local error of
$y(t_1)-y^1$. According to the variation-of-constants formula
\eqref{vari cons} and the scheme of the integrator \eqref{methods},
we obtain
\begin{equation*}
\begin{array}[c]{ll}
y(t_1)-y^1=
\displaystyle\int_{t_0}^{t_1}(t_1-\tau)\textmd{sinc}((t_1-\tau)\Omega)f(y(\tau))d
\tau-h^{2}
 \bar{b}_{1}(V)f(y^{ \frac{1}{2}}).
\end{array}
\end{equation*}
For $\xi>0$ and $-1 \leq\alpha\leq 0$, it can be verified  that
$$|\textmd{sinc}(\xi)| =\frac{|\sin(\xi)|}{\xi} \leq\left\{
\begin{aligned}
 &1\leq\xi^{\alpha},\qquad \ \ \textmd{if}\ \ \ 0<\xi\leq1,\\
  &\xi^{-1}\leq\xi^{\alpha},\quad \ \ \ \textmd{if}\ \ \  \xi>1, \end{aligned}\right.
 $$ which yields  $|\textmd{sinc}(\xi)|  \leq
 \xi^{\alpha}.$ Moreover, it is clear that
 $h^{\alpha}\geq1$ for $-1\leq\alpha\leq 0$. By these results and \eqref{ass1}, one gets
$$\norm{y(t_1)-y^1}_{s+1-\alpha} \leq h^{2+\alpha} \sup_{t_0\leq \tau \leq t_1}\norm{f(y(\tau))}_{s+1}+
c   h^{2+\alpha} \norm{ f(y^{ \frac{1}{2}})}_{s+1}.$$ According to
\eqref{norm pro2-2}, it is true that $\norm{f(y(\tau))}_{s+1}\leq
C,$ which leads to $\norm{y(t_1)-y^1}_{s+1-\alpha} \leq C
h^{2+\alpha} $ by considering \eqref{fhaf1 bound}.

\vskip2mm \textbf{(II)}  We now   discuss the local error of
$\dot{y}(t_1)-\dot{y}^1$.
 It follows from  \eqref{vari cons} and
\eqref{methods} that
\begin{equation*}\label{err-dy1}
\begin{array}[c]{ll}
\dot{y}(t_1)-\dot{y}^1=
\displaystyle\int_{t_0}^{t_1}\cos((t_1-\tau)\Omega)f(y(\tau))d
\tau-h
 b_{1}(V)f(y^{ \frac{1}{2}}),
\end{array}
\end{equation*}
which can be split as
\begin{eqnarray}
   &\ & \dot{y}(t_1)- \dot{y}^1  \nonumber \\
   &= & \displaystyle\int_{t_0}^{t_1}[\cos((t_1-\tau)\Omega)-I_d]f(y(\tau))d \tau  \label{dy1 er 1}\\
    &+ & \displaystyle\int_{t_0}^{t_1} f(y(\tau))d \tau-h
  f\big(y \big(\frac{t_0+t_1}{2}\big)\big) \label{dy1 er 2}\\
    &+ & h
    f\big(y \big(\frac{t_0+t_1}{2}\big)\big)  -h
 f(y^{ \frac{1}{2}})\label{dy1 er 3}\\
     &+ & h
  (I_d- b_{1}(V))
f(y^{ \frac{1}{2}}).\label{dy1 er 4}
\end{eqnarray}

 $\bullet$ Bound of \eqref{dy1 er 1}.
For $\xi>0$ and $-1 \leq\alpha\leq 0$,   it is easy to obtain that
$$|\cos(\xi)-1|=2|\sin(\xi/2)|^2 \leq\left\{
\begin{aligned}
 &2 (\xi/2)^2 \leq 2\xi^{1+\alpha},\ \ \ \textmd{if}\ \ \ 0<\xi\leq1,\\
  &2\leq2\xi^{1+\alpha},\qquad \quad \ \textmd{if}\ \ \  \xi>1, \end{aligned}\right.
 $$
 which yields that  $|\cos(\xi)-1| \leq 2 \xi^{1+\alpha}.$
 On noticing  \eqref{norm pro2-2} with $\sigma'=s+1,$ one arrives at
\begin{equation*}
\begin{array}[c]{ll}
&\norm{\displaystyle\int_{t_0}^{t_1}[\cos((t_1-\tau)\Omega)-I_d]f(y(\tau))d
\tau}_{s-\alpha}\\
\leq& 2  h^{1+\alpha}\displaystyle\int_{t_0}^{t_1}
\norm{f(y(\tau))}_{s+1}d \tau
 \leq 2  h^{1+\alpha}\displaystyle\int_{t_0}^{t_1} C d \tau
 \leq C h^{2+\alpha}.
\end{array}
\end{equation*}

 $\bullet$
  Bound of \eqref{dy1 er 2}. Since $1 \leq \xi^{1+\alpha} + \xi^{ \alpha}$ for  $\xi >
0$, we rewrite \eqref{dy1 er 2} as
\begin{equation*}
\begin{array}[c]{ll}
&\norm{\displaystyle\int_{t_0}^{t_1} f(y(\tau))d \tau-h
  f\big(y \big(\frac{t_0+t_1}{2}\big)\big)}_{s-\alpha} \\
  \leq& h^{1+\alpha}\norm{\displaystyle\int_{t_0}^{t_1} f(y(\tau))d \tau-h
  f\big(y \big(\frac{t_0+t_1}{2}\big)\big)}_{s+1}  +  h^{\alpha}\norm{\displaystyle\int_{t_0}^{t_1} f(y(\tau))d \tau-h
  f\big(y \big(\frac{t_0+t_1}{2}\big)\big)}_{s}.
\end{array}
\end{equation*}
From   \eqref{norm pro2-2} with $\sigma'=s+1,$ it follows that
$$\norm{\displaystyle\int_{t_0}^{t_1} f(y(\tau))d \tau
-hf\big(y \big(\frac{t_0+t_1}{2}\big)\big)}_{s+1} \leq
\displaystyle\int_{t_0}^{t_1} C d \tau+Ch\leq Ch.$$ For an estimate
in the norm $\norm{\cdot}_{s}$, it is remarked that \eqref{dy1 er 2}
is the quadrature error of the  mit-point rule. With its first-order
Peano kernel $K_1(\tau)$ and by the Peano kernel theorem,  one
arrives
\begin{equation*}
\begin{array}[c]{ll}&\norm{\displaystyle\int_{t_0}^{t_1} f(y(\tau))d \tau-h
f\big(y \big(\frac{t_0+t_1}{2}\big)\big)}_{s}
=h^2\norm{\displaystyle\int_{t_0}^{t_1} K_1(\tau) \frac{d}{dt}
f(y(t_0+\tau h))d \tau }_{s} \leq Ch^2,\end{array}
\end{equation*}
where we have used \eqref{norm pro3-1}. Thus, it is true that
\begin{equation}\label{err-f01}\norm{\displaystyle\int_{t_0}^{t_1} f(y(\tau))d \tau-h
f\big(y \big(\frac{t_0+t_1}{2}\big)\big)}_{s-\alpha}\leq C
h^{2+\alpha}.\end{equation}

  $\bullet$  Bound of \eqref{dy1 er 3}. By  \eqref{norm pro2-1} with $\sigma=s-\alpha$, we have
$$\norm{h   f\big(y \big(\frac{t_0+t_1}{2}\big)\big)   -hf(y^{ \frac{1}{2}} ) }_{s-\alpha}\leq C h\norm{ y
\big(\frac{t_0+t_1}{2}\big)- y^{ \frac{1}{2}} }_{s-\alpha}.$$
According to  \eqref{vari cons} and  \eqref{methods}, it is obtained
that
\begin{equation}\label{err-y1}
\begin{array}[c]{ll}
y(\frac{t_0+t_1}{2})-y^{ \frac{1}{2}}=
\displaystyle\int_{t_0}^{\frac{t_0+t_1}{2}}\big(\frac{t_0+t_1}{2}-\tau\big)
\textmd{sinc}\big((\frac{t_0+t_1}{2}-\tau)\Omega\big)f(y(\tau))d\tau.
\end{array}
\end{equation}
 In a similar way to that for  the first part of this proof, one gets
$$\norm{y\big(\frac{t_0+t_1}{2}\big)-y^{ \frac{1}{2}}}_{s+1-\alpha} \leq h^{2+\alpha}
\sup_{t_0\leq \tau \leq \frac{t_0+t_1}{2}}\norm{f(y(\tau))}_{s+1}
\leq C h^{2+\alpha}.$$  Thus,  it is true that
$$\norm{y \big(\frac{t_0+t_1}{2}\big)- y^{ \frac{1}{2}} }_{s-\alpha}\leq
\norm{y \big(\frac{t_0+t_1}{2}\big)- y^{ \frac{1}{2}}
}_{s+1-\alpha}\leq C h^{2+\alpha},$$ which yields
$$\norm{hf\big(y \big(\frac{t_0+t_1}{2}\big)\big)-h f(y^{ \frac{1}{2}}) }_{s-\alpha}  \leq C h^{3+\alpha}.$$

   $\bullet$  Bound of \eqref{dy1 er 4}. According to  \eqref{fhaf1 bound} and the bound   \eqref{ass3}, we get
$$\norm{ h
  (I_d- b_{1}(V))
f(y^{ \frac{1}{2}})}_{s-\alpha}  \leq C h^{2+\alpha} \norm{  f(y^{
\frac{1}{2}})}_{s+1} \leq C h^{2+\alpha}.$$

All these estimates of the single terms in \eqref{dy1 er
1}-\eqref{dy1 er 4}  together imply $\norm{ \dot{y}(t_1)-\dot{y}^1
}_{s-\alpha}  \leq C h^{2+\alpha}.$ The proof is complete.
\end{proof}

\subsection{Stability} This subsection studies the stability of the ERKN integrator
\eqref{methods}.
\begin{lem}\label{lem-stabi}
 (Stability  in $H^{s+1-\alpha}\times H^{s-\alpha}$ for $-1
\leq\alpha\leq 0$.) Under the conditions of Lemma
\ref{lem-regularity},   if we consider the ERKN integrator
\eqref{methods} with different initial values $(y_0,\dot{y}_0)$ and
$(z_0,\dot{z}_0)$ satisfying
$$|||(y_0,\dot{y}_0)|||_{s}\leq M\qquad \textmd{and} \qquad |||(z_0,\dot{z}_0)|||_{s}\leq M,$$
then one gets
$$|||(y^1,\dot{y}^1)-(z^1,\dot{z}^1)|||_{s-\alpha}\leq (1+Ch) |||(y^0,\dot{y}^0)-(z^0,\dot{z}^0)|||_{s-\alpha},$$
where the constant $C$ depends only on $M, s, p,$ and $c$.
\end{lem}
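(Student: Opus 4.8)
The plan is to write the one-step difference of the two trajectories as a linear propagation term plus a nonlinear perturbation, and to bound the linear part by the factor $(1+Ch)$ in the norm $|||\cdot|||_{s-\alpha}$ while showing that the nonlinear part only contributes $Ch$. Subtracting the two instances of \eqref{methods} (with $c_1=\tfrac12$) and abbreviating $\delta y^0=y^0-z^0$, $\delta\dot y^0=\dot y^0-\dot z^0$, and likewise $\delta y^1,\delta\dot y^1$, the internal stage difference is $y^{1/2}-z^{1/2}=\cos(\tfrac12 h\Omega)\,\delta y^0+\tfrac h2\,\mathrm{sinc}(\tfrac12 h\Omega)\,\delta\dot y^0$, and the updated differences read
\begin{align*}
\delta y^1 &= \cos(h\Omega)\,\delta y^0 + h\,\mathrm{sinc}(h\Omega)\,\delta\dot y^0 + h^2\bar b_1(V)\big(f(y^{1/2})-f(z^{1/2})\big),\\
\delta\dot y^1 &= -\Omega\sin(h\Omega)\,\delta y^0 + \cos(h\Omega)\,\delta\dot y^0 + h\,b_1(V)\big(f(y^{1/2})-f(z^{1/2})\big).
\end{align*}
I would regard the first two terms of each line (the action of the propagator) as the linear part and the $\bar b_1,b_1$ terms as the nonlinear part.

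For the linear part I would use that the propagator is diagonal in the Fourier index, acting on mode $j$ through the $2\times2$ block $R_j$. For $j\neq0$ one has $\langle j\rangle=\omega_j$, and in the rescaled variables $(\omega_j\,\delta y_j,\delta\dot y_j)$ the block $R_j$ is the plane rotation through the angle $h\omega_j$; since in $|||\cdot|||_{s-\alpha}$ the weight of $\delta y_j$ is exactly $\omega_j$ times that of $\delta\dot y_j$, every such mode is preserved isometrically. The exceptional mode $j=0$ acts by the shear $\left(\begin{smallmatrix}1&h\\0&1\end{smallmatrix}\right)$, whose operator norm is at most $1+h$. Because the propagator is block-diagonal and $|||\cdot|||_{s-\alpha}$ is an $\ell^2$-sum over modes, this gives a bound $(1+Ch)\,|||(\delta y^0,\delta\dot y^0)|||_{s-\alpha}$ for the linear part.

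For the nonlinear part I would first note, exactly as in the proof of Lemma \ref{lem-regularity}, that $\|y^{1/2}\|_{s+1}\le 2M$ and $\|z^{1/2}\|_{s+1}\le 2M$, so the local Lipschitz estimate \eqref{norm pro2-1} of Proposition \ref{pro thm 2} applies with $\sigma'=s+1$ for every index $\sigma\in[s-\alpha,s+1]$ (here $-1\le\alpha\le0$ ensures $s+1\ge\max(1,|\sigma|)$). Using $|\cos|\le1$, $\tfrac h2|\mathrm{sinc}(\tfrac12 h\Omega)|\le\tfrac12$ and, mode-wise, $\tfrac h2|\mathrm{sinc}(\tfrac12 h\omega_j)|\le\langle j\rangle^{-1}$, together with $\alpha\le0$, I would obtain
$$\|y^{1/2}-z^{1/2}\|_{s+1}\le C\,|||(\delta y^0,\delta\dot y^0)|||_{s-\alpha},\qquad \|y^{1/2}-z^{1/2}\|_{s-\alpha}\le C\,|||(\delta y^0,\delta\dot y^0)|||_{s-\alpha}.$$
For the $\bar b_1$ term, writing $w=f(y^{1/2})-f(z^{1/2})$, the first estimate of Proposition \ref{pro norm } with $\beta=\alpha$ (its $s$ replaced by $s+1$) gives $\|\bar b_1(V)w\|_{s+1-\alpha}\le ch^{\alpha}\|w\|_{s+1}$, so with \eqref{norm pro2-1} and the first bound above $\|h^2\bar b_1(V)w\|_{s+1-\alpha}\le Ch^{2+\alpha}\,|||(\delta y^0,\delta\dot y^0)|||_{s-\alpha}$. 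For the $b_1$ term I would split $b_1(V)=I-(I-b_1(V))$: the identity part is estimated directly by $h\|w\|_{s-\alpha}\le Ch\,|||(\delta y^0,\delta\dot y^0)|||_{s-\alpha}$ through \eqref{norm pro2-1} with $\sigma=s-\alpha$ and the second bound above, while for the remainder the third estimate of Proposition \ref{pro norm } with $\beta=\alpha$ gives $\|w-b_1(V)w\|_{s-\alpha}\le ch^{1+\alpha}\|w\|_{s+1}$, whence $\|h(I-b_1(V))w\|_{s-\alpha}\le Ch^{2+\alpha}\,|||(\delta y^0,\delta\dot y^0)|||_{s-\alpha}$. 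Since $2+\alpha\ge1$ and $h\le1$, each nonlinear term is bounded by $Ch\,|||(\delta y^0,\delta\dot y^0)|||_{s-\alpha}$.

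Because the $\bar b_1$ term lands in the $y$-component (measured in $\|\cdot\|_{s+1-\alpha}$) and the $b_1$ term in the $\dot y$-component (measured in $\|\cdot\|_{s-\alpha}$), the triangle inequality in $|||\cdot|||_{s-\alpha}$ combines the linear $(1+Ch)$ bound with the nonlinear $Ch$ bound to yield the asserted inequality. I expect the main obstacle to be the $b_1$ term: under the stated hypotheses $b_1(V)$ need not be a bounded operator on $H^{s-\alpha}$, so it cannot simply be pulled out; the decisive device is to split off the identity and apply \eqref{ass3} (equivalently, the third estimate of Proposition \ref{pro norm }) precisely with $\beta=\alpha$, which produces the gain $h^{2+\alpha}$ while keeping the required regularity at level $H^{s+1}$, the only level at which $y^{1/2}$ and $z^{1/2}$ are controlled. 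A secondary subtlety is the zero Fourier mode, which destroys the exact isometry of the propagator and accounts for the factor $(1+Ch)$ instead of a contraction.
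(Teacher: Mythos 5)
Your proposal is correct and follows essentially the same route as the paper: split the one-step difference into the linear propagator part (which the paper bounds by citing from Gauckler's work the identity $|||(w,\dot w)|||_{\sigma}=|||(v,\dot v)+h(\tilde v,0)|||_{\sigma}$, i.e.\ exactly your rotation-plus-zero-mode-shear argument), the $\bar b_1$ term controlled via \eqref{ass1} at level $H^{s+1}$, and the $b_1$ term controlled by splitting off the identity via \eqref{ass3}, with the final reduction to $|||\cdot|||_{s-\alpha}$ using $\alpha\le 0$. The only cosmetic difference is that you re-derive the propagator estimate mode by mode and phrase the $b_1$ splitting at the operator level, while the paper uses the scalar bound $|b_1(\xi)|\le 1+c\xi^{1+\alpha}$ directly.
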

\begin{proof}
For $ (w^{\intercal},\dot{w}^{\intercal})^{\intercal}=R(h)
(v^{\intercal},\dot{v}^{\intercal})^{\intercal}, $ it has been shown
in \cite{Gauckler15} that
\begin{equation*}\label{Rh bound}
|||(w,\dot{w})|||_{\sigma}=|||(v,\dot{v})+h(\tilde{v},0)|||_{\sigma},
\end{equation*}
where $\tilde{v}\in \mathbb{C}^{\mathcal{K}} $  and
$\tilde{v}_j=\delta_{j,0}\dot{v}_j$  with the Kronecker $\delta$.
 Using this result and the scheme of ERKN integrators
\eqref{methods}, we obtain
\begin{eqnarray}
  |||(y^1,\dot{y}^1)-(z^1,\dot{z}^1)|||_{s-\alpha} &\leq&  |||(y^0,\dot{y}^0)-(z^0,\dot{z}^0)|||_{s-\alpha}  \nonumber \\
   &+ & h |\dot{y}^0_0-\dot{z}^0_0|  \label{sta er 1}\\
    &+ & h^2\norm{\bar{b}_1(V)  \big(f(y^{\frac{1}{2}}) -f(z^{\frac{1}{2}})\big) }_{s+1-\alpha} \label{sta er 2}\\
    &+ & h \norm{b_1(V)  \big(f(y^{\frac{1}{2}}) -f(z^{\frac{1}{2}})\big) }_{s-\alpha}. \label{sta er 3}
\end{eqnarray}

 $\bullet$   For \eqref{sta er 1}, it is clear that $h
|\dot{y}^0_0-\dot{z}^0_0|\leq h
\norm{\dot{y}^0_0-\dot{z}^0_0}_{s-\alpha}. $

 $\bullet$  For \eqref{sta er 2},  considering the bound \eqref{ass1} of
$\bar{b}_1$ and the estimate \eqref{norm pro2-2} with
$\sigma=\sigma'=s+1$  leads to
$$h^2\norm{\bar{b}_1(V)  \big(f(y^{\frac{1}{2}}) -f(z^{\frac{1}{2}})\big) }_{s+1-\alpha}  \leq
C h^{2+\alpha}\norm{  f(y^{\frac{1}{2}}) -f(z^{\frac{1}{2}}) }_{s+1}
\leq C h^{2+\alpha} \norm{ y^{\frac{1}{2}}-z^{\frac{1}{2}}
}_{s+1}.$$ Using the scheme of ERKN integrators \eqref{methods}
again,  we  deduce that
$$y^{\frac{1}{2}}-z^{\frac{1}{2}}=\cos\big(\frac{1}{2}h\Omega\big) (y^0-z^0)+h\frac{1}{2} \textmd{sinc}\big(\frac{1}{2}h\Omega\big)(\dot{y}^0-\dot{z}^0),$$
which gives $$\norm{ y^{\frac{1}{2}}-z^{\frac{1}{2}} }_{s+1}\leq
\norm{ y^0-z^0}_{s+1}+\norm{\dot{y}^0-\dot{z}^0}_{s}.$$ This yields
$$h^2\norm{\bar{b}_1(V)  \big(f(y^{\frac{1}{2}})
-f(z^{\frac{1}{2}})\big) }_{s+1-\alpha}  \leq C h^{2+\alpha}
 \norm{ y^0-z^0}_{s+1}+ C h^{2+\alpha}
\norm{\dot{y}^0-\dot{z}^0}_{s}.$$

  $\bullet$ For \eqref{sta er 3}, since $|b_1(\xi)|\leq1+c\xi^{1+\alpha}$ from
\eqref{ass3}, we obtain
\begin{equation*}
\begin{array}[c]{ll}&h \norm{b_1(V)   f(y^{\frac{1}{2}}) -f(z^{\frac{1}{2}})
}_{s-\alpha}\\
\leq&   h  \norm{  f(y^{\frac{1}{2}}) -f(z^{\frac{1}{2}})
}_{s-\alpha} +ch^{2+\alpha} \norm{ f(y^{\frac{1}{2}})
-f(z^{\frac{1}{2}})  }_{s+1} \\
\leq & C h  \norm{  y^{\frac{1}{2}}-z^{\frac{1}{2}} }_{s-\alpha}
+Ch^{2+\alpha} \norm{ y^{\frac{1}{2}}-z^{\frac{1}{2}}  }_{s+1 }\\
\leq  &C (h+h^{2+\alpha})  \norm{  y^0-z^0}_{s+1} +  C
(h+h^{2+\alpha}) \norm{\dot{y}^0-\dot{z}^0}_{s}.
\end{array}
\end{equation*}

These estimates of \eqref{sta er 1}-\eqref{sta er 3} as well as
$\alpha \leq0$ complete the proof.
\end{proof}

\subsection{Proof  of Theorem \ref{thm main-re} for $-1\leq\alpha\leq0$}
  Based on the above three lemmas, we are now  in
a position  to present the proof of Theorem \ref{thm main-re} for
$-1\leq\alpha\leq0$.

\begin{proof}
 \textbf{(I)} We first present the proof for  the case $\alpha=0$. Let $C_1$ and $C_2$  be the constants of Lemmas \ref{lem-local error} and
\ref{lem-stabi} with  $\alpha=0$, respectively. It is noted that
Lemma \ref{lem-stabi} is considered with $2M$  instead of $M$. Let
$h_0 = M/(C_1T e^{C_2T} )$ and  we show  by induction on $n$ that
for   $h\leq h_0$
\begin{equation}\label{er ma re 1}
|||(y^{n},\dot{y}^{n})-(y (t_n),\dot{y}(t_n))|||_{s}\leq C_1
e^{C_2nh}nh^2\end{equation} as long as $t_{n}-t_0=nh\leq T.$

 Firstly, it is clear that
$|||(y^{0},\dot{y}^{0})-(y (t_0),\dot{y}(t_0))|||_{s}=0\leq C_1.$
Assume that the  result \eqref{er ma re 1} is true for
$n=0,1,\ldots,m-1$, which means that
$$|||(y^{m-1},\dot{y}^{m-1})-(y (t_{m-1}),\dot{y}(t_{m-1}))|||_{s}\leq C_1 e^{C_2(m-1)h}(m-1)h^2.$$
This gives
$$|||(y^{m-1},\dot{y}^{m-1})|||_{s}\leq M+C_1 e^{C_2(m-1)h}(m-1)h^2\leq M+C_1 e^{C_2T}Th\leq2M$$
as long as $t_{m-1}-t_0=(m-1)h\leq T.$

Denoting by $\mathcal{E}$ one time step with the ERKN integrator
\eqref{methods}, we obtain
\begin{eqnarray}
 |||(y^{m},\dot{y}^{m})-(y (t_m),\dot{y}(t_m))|||_{s}&=&   |||\mathcal{E}(y^{m-1},\dot{y}^{m-1})-(y (t_m),\dot{y}(t_m))|||_{s} \nonumber \\
   &\leq&  |||\mathcal{E}(y^{m-1},\dot{y}^{m-1})-\mathcal{E}(y (t_{m-1}),\dot{y}(t_{m-1}))|||_{s}   \label{er pr 1}\\
    &+ &  |||\mathcal{E}(y (t_{m-1}),\dot{y}(t_{m-1}))-(y (t_m),\dot{y}(t_m))|||_{s}.  \label{er pr 2}
\end{eqnarray}
 According to Lemma \ref{lem-stabi}, \eqref{er pr 1} admits  the bound
\begin{equation*}
\begin{array}[c]{ll}&|||\mathcal{E}(y^{m-1},\dot{y}^{m-1})-\mathcal{E}(y
(t_{m-1}),\dot{y}(t_{m-1}))|||_{s}\\
\leq& (1+C_2h) |||(y^{m-1},\dot{y}^{m-1})-(y
(t_{m-1}),\dot{y}(t_{m-1}))|||_{s } \leq (1+C_2h)  C_1
e^{C_2(m-1)h}(m-1)h^2.
\end{array}
\end{equation*}
 With regard to  \eqref{er pr 2}, it follows from Lemma
\ref{lem-local error} that
\begin{equation*}
\begin{array}[c]{ll}&|||\mathcal{E}(y (t_{m-1}),\dot{y}(t_{m-1}))-(y (t_m),\dot{y}(t_m))|||_{s}
\leq C_1h^{2}.\\
\end{array}
\end{equation*}
 Thus,  we get
\begin{equation*}
\begin{array}[c]{ll} & |||(y^{m},\dot{y}^{m})-(y (t_m),\dot{y}(t_m))|||_{s}
\leq  (1+C_2h)  C_1 e^{C_2(m-1)h}(m-1)h^2+C_1h^{2}.
\end{array}
\end{equation*}
In what follows, we prove that
\begin{equation}\label{ind-fact}
\begin{array}[c]{ll}   (1+C_2h)  C_1 e^{C_2(m-1)h}(m-1)h^2+C_1h^{2}
\leq     C_1 e^{C_2mh}mh^2.
\end{array}
\end{equation}
By the Taylor expansion of  $e^{C_2(m-1)h}$, we get
\begin{equation*}
\begin{array}[c]{ll}  \ \ \ & (1+C_2h)  C_1
e^{C_2(m-1)h}(m-1)h^2+C_1h^{2}\\
=& (1+C_2h)  C_1
\sum\limits_{k=0}^{\infty}\dfrac{(C_2(m-1)h)^{k}}{k!}
 (m-1)h^2+C_1h^{2}\\
 =&C_1\sum\limits_{k=0}^{\infty}\dfrac{1}{k!}C_2^k (m-1)^{k+1}h^{k+2}+
 C_1\sum\limits_{k=0}^{\infty}\dfrac{1}{k!}C_2^{k+1} (m-1)^{k+1}h^{k+3}+C_1h^{2}\\
  =&C_1mh^2 +C_1\sum\limits_{k=1}^{\infty}\big((m-1)^{k+1}+k(m-1)^{k}\big)\dfrac{1}{k!}C_2^k h^{k+2}.\\
\end{array}
\end{equation*}
 In a similar way, one has
$  C_1 e^{C_2mh}mh^2
  =C_1mh^2 +C_1\sum\limits_{k=1}^{\infty} m^{k+1}\dfrac{1}{k!}C_2^k h^{k+2}.
$ Comparing these two results and  using  the following fact
\begin{equation*}
\begin{array}[c]{ll}   m^{k+1}= (m-1+1)^{k+1}\geq
(m-1)^{k+1}+k(m-1)^{k} \quad  \textmd{for}  \quad m\geq1,\ \ k\geq1,
\end{array}
\end{equation*}
we  conclude  that   \eqref{ind-fact} is true.
 Therefore, \eqref{er ma re 1} holds and hence
\begin{equation*}
|||(y^{n},\dot{y}^{n})-(y (t_n),\dot{y}(t_n))|||_{s}\leq  C_1T
e^{C_2T}h\leq  Ch,\end{equation*} which proves
 the statement of Theorem \ref{thm main-re}
for $\alpha=0$.

\textbf{(II)}  We then  consider the case $-1\leq\alpha < 0$. Let
$h_0$ be as above and let further $C_1$ and $C_2$ be as above but
for the new $\alpha$ instead of $\alpha=0$. In what follows, we
prove, by induction on $n$ that
\begin{equation}\label{er ma re 2}
|||(y^{n},\dot{y}^{n})-(y (t_n),\dot{y}(t_n))|||_{s-\alpha}\leq C_1
e^{C_2nh}nh^{2+\alpha}\end{equation} as long as $t_{n}-t_0=nh\leq
T.$

It is clear that this holds for $n=0$.     From the above proof for
the case $\alpha=0$, it follows  that
$|||(y^{n-1},\dot{y}^{n-1})|||_{s}\leq2M$ as long as
$t_{n-1}-t_0=(n-1)h\leq T.$ This allows us to apply Lemmas
\ref{lem-local error}  and \ref{lem-stabi}   to \eqref{er ma re 2},
which gives
\begin{equation*}
\begin{array}[c]{ll} &|||(y^{n},\dot{y}^{n})-(y
(t_n),\dot{y}(t_n))|||_{s-\alpha} \leq
|||\mathcal{E}(y^{n-1},\dot{y}^{n-1})-\mathcal{E}(y
(t_{n-1}),\dot{y}(t_{n-1}))|||_{s-\alpha}\\
&+   |||\mathcal{E}(y
(t_{n-1}),\dot{y}(t_{n-1}))-(y (t_n),\dot{y}(t_n))|||_{s-\alpha}\\
\leq&  (1+C_2h)||| (y^{n-1},\dot{y}^{n-1})- (y
(t_{n-1}),\dot{y}(t_{n-1}))|||_{s-\alpha}+   C_1h^{2+\alpha}\\
\leq&  (1+C_2h) C_1 e^{C_2(n-1)h}(n-1)h^{2+\alpha}+ C_1h^{2+\alpha}
\leq C_1 e^{C_2nh}nh^{2+\alpha}.
\end{array}
\end{equation*}
This means that \eqref{er ma re 2} is true and thus one has
\begin{equation*}
|||(y^{n},\dot{y}^{n})-(y (t_n),\dot{y}(t_n))|||_{s-\alpha}\leq C_1
T e^{C_2 T}h^{1+\alpha}\leq Ch^{1+\alpha}.\end{equation*}
\end{proof}

\begin{rem}\label{re regularity num}
From the above proof  for $\alpha = 0$, it follows that the
numerical solutions are bounded in $H^{s+1} \times H^s$
\begin{equation}\label{regularity num}
|||(y^{n},\dot{y}^{n})|||_{s}\leq 2M \qquad \textmd{for}\qquad 0\leq
t_n-t_0=nh\leq T.
\end{equation}   This regularity of the numerical solution is essential
for the proof of Theorem \ref{thm main-re} for $0 <\alpha \leq 1$ in
the next section.
\end{rem}

\section{Proof of the higher-order error bounds in lower-order
Sobolev spaces}\label{sec-prof2}

In order to prove Theorem \ref{thm main-re} in lower-order Sobolev
spaces, we first need to present and prove the following three
lemmas.

\begin{lem}\label{lem-regularity-new}
 Letting $s\geq0$ and  $0<\alpha\leq 1,$ and assuming that Assumption \ref{thm ass1} holds
for $\beta=\alpha$ with constant $c$ and
$|||(y^0,\dot{y}^0)|||_{s}\leq M,$ one has
$|||(y^1,\dot{y}^1)|||_{s}  \leq C$ with a constant $C$ depending
only on $M, s, p,$ and $c$.
\end{lem}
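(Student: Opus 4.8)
The plan is to follow the proof of Lemma~\ref{lem-regularity} almost verbatim, altering only the single estimate in which the sign of $\alpha$ was exploited. First I would bound the internal stage exactly as there: from the first line of \eqref{methods}, together with $\textmd{sinc}(0)=1\le h^{-1}$ and $|\textmd{sinc}(\xi)|\le\xi^{-1}$ for $\xi>0$, one gets $\norm{y^{\frac{1}{2}}}_{s+1}\le\norm{y^{0}}_{s+1}+\norm{\dot{y}^{0}}_{s}\le 2M$, and then \eqref{norm pro2-2} with $\sigma'=s+1$ yields $\norm{f(y^{\frac{1}{2}})}_{s+1}\le C$. This part is unchanged and does not see the sign of $\alpha$.

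The difference enters through the two nonlinear increments. In Lemma~\ref{lem-regularity} the decisive step was the monotonicity $\norm{f(y^{\frac{1}{2}})}_{s+1+\alpha}\le\norm{f(y^{\frac{1}{2}})}_{s+1}$, valid only because $\alpha\le 0$ gives $s+1+\alpha\le s+1$; for $0<\alpha\le 1$ this fails, so instead I would rely on the uniform boundedness of the coefficient multipliers. For $\bar{b}_{1}$ this is Assumption~\ref{thm ass1} at $\beta=0$, where \eqref{ass1} reads $|\bar{b}_{1}(\xi^{2})|\le c$; the second line of \eqref{methods} then gives $\norm{y^{1}}_{s+1}\le\norm{y^{0}}_{s+1}+\norm{\dot{y}^{0}}_{s}+h^{2}\norm{\bar{b}_{1}(V)f(y^{\frac{1}{2}})}_{s+1}\le 2M+ch^{2}\norm{f(y^{\frac{1}{2}})}_{s+1}\le C$. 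For the velocity, the propagator terms $-\Omega\sin(h\Omega)y^{0}$ and $\cos(h\Omega)\dot{y}^{0}$ contribute at most $\norm{y^{0}}_{s+1}+\norm{\dot{y}^{0}}_{s}\le 2M$, exactly as in Lemma~\ref{lem-regularity}, while the increment $hb_{1}(V)f(y^{\frac{1}{2}})$ is controlled in $H^{s}$ by $ch\norm{f(y^{\frac{1}{2}})}_{s}\le ch\norm{f(y^{\frac{1}{2}})}_{s+1}\le C$ as soon as $b_{1}$ is known to be a bounded multiplier. Collecting these estimates gives $|||(y^{1},\dot{y}^{1})|||_{s}\le C$.

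The crux, and the genuine difference from Lemma~\ref{lem-regularity}, is that one must avoid producing any positive power of $h\Omega$. Using instead the order estimates \eqref{ass2} or \eqref{ass3} would introduce a factor $(h\Omega)^{\alpha}$ or $(h\Omega)^{1+\alpha}$ and hence force the higher-order norm $\norm{f(y^{\frac{1}{2}})}_{s+1+\alpha}$ on the right-hand side; this norm is unavailable, since bounding it would require $y^{0}\in H^{s+1+\alpha}$, beyond the hypothesis $|||(y^{0},\dot{y}^{0})|||_{s}\le M$, and no CFL-type restriction coupling $h$ to the discretization is imposed. The whole point is therefore to use only the $h$- and frequency-independent boundedness of $\bar{b}_{1}$ and $b_{1}$. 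For $\bar{b}_{1}$ this is immediate from \eqref{ass1} at $\beta=0$, which is available under the hypotheses of Theorem~\ref{thm main-re}; for $b_{1}$ I would invoke the boundedness furnished by Assumption~\ref{thm ass1} for the integrators of Table~\ref{praERKN} (the case $\beta=-1$ of \eqref{ass3} already gives $|b_{1}(\xi^{2})|\le 1+c$). Securing this uniform boundedness, rather than the routine norm bookkeeping, is the step I would treat most carefully.
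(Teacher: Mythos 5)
The paper omits this proof entirely (``quite similar to that of Lemma \ref{lem-regularity}''), and your argument is a correct way to fill that gap; in fact you rightly observe that the adaptation is not verbatim, since the step $\norm{f(y^{1/2})}_{s+1+\alpha}\leq\norm{f(y^{1/2})}_{s+1}$ used for $-1\leq\alpha\leq 0$ reverses for $0<\alpha\leq 1$, and that using \eqref{ass2} or \eqref{ass3} at $\beta=\alpha$ would force the unavailable norm $\norm{f(y^{1/2})}_{s+1+\alpha}$ (or a CFL-type restriction). Your fix --- uniform boundedness of $\bar{b}_1$ from \eqref{ass1} at $\beta=0$ --- is exactly what the hypotheses of Theorem \ref{thm main-re} supply, even though the lemma as stated only mentions $\beta=\alpha$; this is a genuine (if minor) imprecision in the paper that your proof exposes. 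One small simplification: for the $b_1$ term you do not need $\beta=-1$ of \eqref{ass3} (which lies outside the theorem's stated hypotheses); $\beta=0$ gives $|1-b_1(\xi^2)|\leq c\xi$, whence $h\norm{(I-b_1(V))f(y^{1/2})}_{s}\leq ch^{2}\norm{f(y^{1/2})}_{s+1}\leq C$, and the remaining piece $h\norm{f(y^{1/2})}_{s}$ is bounded directly. With that adjustment the proof uses only $\beta=0$ and $\beta=\alpha$, matching Theorem \ref{thm main-re}.
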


The proof is quite similar to that of Lemma \ref{lem-regularity} and
thus we ignore it.

\begin{lem}\label{lem-local error-2}
 (Local error in $H^{s+1-\alpha}\times H^{s-\alpha}$ for $0<\alpha\leq 1$.)
Under the conditions of Lemma \ref{lem-regularity-new}, if
$$|||(y(\tau),\dot{y}(\tau))|||_{s}\leq M$$ for $t_0\leq \tau \leq
t_1,$ then, it holds that
$$|||(y(t_1),\dot{y}(t_1))-(y^1,\dot{y}^1)|||_{s-\alpha}\leq Ch^{2+\alpha},$$
where the constant $C$ depends only on $M, s, p,$ and $c$.
\end{lem}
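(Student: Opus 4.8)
The plan is to keep the two-part structure of the proof of Lemma~\ref{lem-local error}, estimating $y(t_1)-y^1$ in $\norm{\cdot}_{s+1-\alpha}$ and $\dot y(t_1)-\dot y^1$ in $\norm{\cdot}_{s-\alpha}$, but to replace every step that relied on $h^{\alpha}\ge 1$ (true only for $\alpha\le 0$) by a quadrature-based argument. The organising observation is that the sinc-kernel integrates exactly to the consistent coefficient: substituting $\sigma=t_1-\tau$ and using $1-\cos(h\Omega)=2\sin^2(h\Omega/2)$ one finds
$$\int_{t_0}^{t_1}(t_1-\tau)\sinc((t_1-\tau)\Omega)\,d\tau=h^2\,\tfrac12\sinc^2(h\Omega/2),$$
which is exactly the quantity compared with $\bar b_1(V)$ in \eqref{ass2}. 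Hence for $0<\alpha\le 1$ one should insert the midpoint value $f(y(t_*))$, $t_*=(t_0+t_1)/2$, rather than estimate the kernel crudely.

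For part~(I) I would split $y(t_1)-y^1$ into (B1)~$h^2\big(\tfrac12\sinc^2(h\Omega/2)-\bar b_1(V)\big)f(y(t_*))$, (B2)~$h^2\bar b_1(V)\big(f(y(t_*))-f(y^{1/2})\big)$, and (A)~$\int_{t_0}^{t_1}(t_1-\tau)\sinc((t_1-\tau)\Omega)\big(f(y(\tau))-f(y(t_*))\big)\,d\tau$. Term~(B1) is $\le Ch^{2+\alpha}$ by the second estimate of Proposition~\ref{pro norm } with $\beta=\alpha$ and \eqref{norm pro2-2}. Term~(B2) is $O(h^4)$: writing $\bar b_1(V)=\tfrac12\sinc^2(h\Omega/2)-\big(\tfrac12\sinc^2(h\Omega/2)-\bar b_1(V)\big)$ and using again Proposition~\ref{pro norm }, together with $\norm{y(t_*)-y^{1/2}}_{s+1}\le Ch^2$ (obtained from \eqref{err-y1} exactly as in Lemma~\ref{lem-local error}) and the Lipschitz bound \eqref{norm pro2-1}. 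The delicate piece is~(A): since $\dot y$ is controlled only in $H^s$, I cannot differentiate in the target norm $H^{s+1-\alpha}$, so I would instead extract smoothing from the kernel via the elementary bound $|\sin x|\le|x|^{\alpha}$ ($0\le\alpha\le 1$), which gives
$$\norm{(t_1-\tau)\sinc((t_1-\tau)\Omega)\,g}_{s+1-\alpha}\le(t_1-\tau)^{\alpha}\norm{g}_{s}.$$
Combining this with $\norm{f(y(\tau))-f(y(t_*))}_{s}\le C\norm{y(\tau)-y(t_*)}_{s}\le C|\tau-t_*|$ (from \eqref{norm pro2-1} and the $H^s$-bound on $\dot y$) and integrating $(t_1-\tau)^{\alpha}|\tau-t_*|$ over $[t_0,t_1]$ yields $Ch^{2+\alpha}$.

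For part~(II) I would reuse the decomposition \eqref{dy1 er 1}--\eqref{dy1 er 4}. Terms \eqref{dy1 er 1}, \eqref{dy1 er 3}, \eqref{dy1 er 4} are routine: for \eqref{dy1 er 1} the bound $|\cos\xi-1|\le|\xi|^{1+\alpha}$ provides smoothing of order $1+\alpha$, so its $H^{s-\alpha}$-norm is controlled by $\int_{t_0}^{t_1}(t_1-\tau)^{1+\alpha}\,d\tau\cdot C=Ch^{2+\alpha}$; term \eqref{dy1 er 3} is $O(h^3)$ from $\norm{y(t_*)-y^{1/2}}_{s+1}\le Ch^2$ and \eqref{norm pro2-1}; term \eqref{dy1 er 4} is $Ch^{2+\alpha}$ from the third estimate of Proposition~\ref{pro norm } (i.e.\ \eqref{ass3}) and \eqref{norm pro2-2}. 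The genuine difficulty is the midpoint quadrature error \eqref{dy1 er 2}: the first-Peano-kernel estimate used for $\alpha\le 0$ only gives $O(h^2)$ in $H^{s}$, which is weaker than required by a factor $h^{\alpha}$, and here no smoothing operator stands in front of the term. I would resolve this by interpolation. The first Peano kernel with \eqref{norm pro3-1} bounds the $H^{s}$-norm of \eqref{dy1 er 2} by $Ch^2$, while the second Peano kernel (the midpoint rule being exact for linear integrands) with \eqref{norm pro3-2} bounds its $H^{s-1}$-norm by $Ch^3$; the Fourier-side H\"older inequality $\norm{w}_{s-\alpha}\le\norm{w}_{s}^{1-\alpha}\norm{w}_{s-1}^{\alpha}$ then produces $Ch^{2(1-\alpha)+3\alpha}=Ch^{2+\alpha}$. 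To invoke \eqref{norm pro3-2} I first record that $\ddot y=-\Omega^2 y+f(y)$ satisfies $\norm{\ddot y}_{s-1}\le\norm{y}_{s+1}+\norm{f(y)}_{s+1}\le C$, so the additional hypothesis of Proposition~\ref{pro thm 3} is automatically met.

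The step I expect to be the main obstacle is precisely the midpoint quadrature error \eqref{dy1 er 2}. Unlike in the $\alpha\le 0$ regime, one must gain the extra fractional power $h^{\alpha}$ beyond the naive second-order estimate, and because this term carries no regularising kernel the gain cannot come from the frequencies as in part~(I); the interpolation between the $H^s$ and $H^{s-1}$ quadrature bounds, made possible by the second time-derivative estimate \eqref{norm pro3-2} and the $H^{s-1}$-regularity of $\ddot y$, is the new ingredient that distinguishes the present case from Lemma~\ref{lem-local error}. Collecting parts~(I) and~(II) then gives $|||(y(t_1),\dot y(t_1))-(y^1,\dot y^1)|||_{s-\alpha}\le Ch^{2+\alpha}$.
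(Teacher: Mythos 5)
Your proposal is correct and follows essentially the same route as the paper's proof: the same exact integration of the sinc kernel to $\tfrac12 h^2\,\mathrm{sinc}^2(h\Omega/2)$, insertion of the midpoint value, use of \eqref{ass2} and \eqref{ass3}, and the combination of the first- and second-order Peano kernel bounds (via \eqref{norm pro3-1}, \eqref{norm pro3-2} and the observation $\ddot y=-\Omega^2 y+f(y)$) to gain the fractional power $h^{\alpha}$ in the midpoint quadrature error. The only differences are cosmetic: you regroup the three terms of part (I) slightly differently, and you interpolate the $H^{s}$ and $H^{s-1}$ quadrature bounds multiplicatively ($\norm{w}_{s-\alpha}\leq\norm{w}_{s}^{1-\alpha}\norm{w}_{s-1}^{\alpha}$) where the paper uses the additive splitting $1\leq\xi^{\alpha}+\xi^{\alpha-1}$, both of which yield $Ch^{2+\alpha}$.
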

\begin{proof}  In a similar way to  the proof of Lemma \ref{lem-local error},
$C$ is denoted as a generic constant which  depends on $M, s, p,$
and $c$.

\vskip2mm \textbf{(I) Local error of $y(t_1)-y^1$.} According to
  \eqref{vari cons}, \eqref{methods} and the fact that
$$\int_{t_0}^{t_1}(t_1-\tau)\textmd{sinc}((t_1-\tau)\Omega) d\tau=\frac{1}{2}h^2\textmd{sinc}^2\big(\frac{1}{2}h\Omega\big),$$
 we obtain
\begin{eqnarray}
  y(t_1)-y^1&= &
\displaystyle\int_{t_0}^{t_1}(t_1-\tau)\textmd{sinc}((t_1-\tau)\Omega)f(y(\tau))d
\tau-h^{2}
 \bar{b}_{1}(V)f(y^{ \frac{1}{2}})   \nonumber \\
   &=& \displaystyle\int_{t_0}^{t_1}(t_1-\tau)\textmd{sinc}((t_1-\tau)\Omega)\big[ f(y(\tau))-f\big(y
   \big(\frac{t_0+t_1}{2}\big)\big)\big]d
\tau   \label{er pr 1-new}\\
 &+ &\frac{1}{2}h^2\textmd{sinc}^2\big(\frac{1}{2}h\Omega\big)
\big[f\big(y\big(\frac{t_0+t_1}{2}\big)\big)-f(y^{\frac{1}{2}})  \big]  \label{er pr 2-new}\\
&+ & h^2 \big[\frac{1}{2}\textmd{sinc}^2\big(\frac{1}{2}h\Omega\big)
-\bar{b}_{1}(V)\big]f(y^{ \frac{1}{2}}).  \label{er pr 3-new}
\end{eqnarray}

  $\bullet$  Bound of  \eqref{er pr 1-new}.
For $\xi>0$ and $0<\alpha\leq 1$, we have
 $|\textmd{sinc}(\xi)|  \leq
 \xi^{-1+\alpha}.$ By this result,   the estimate \eqref{norm pro2-1}
with $\sigma=s$, and the fact $$\norm{y(\tau)- y
   \big(\frac{t_0+t_1}{2}\big)}_{s}  \leq\int_{\frac{t_0+t_1}{2}}^{\tau} \norm{\dot{y}(t)}_{s} d
t\leq Ch,$$ we have
\begin{equation*}
\begin{array}[c]{ll}&\norm{\displaystyle\int_{t_0}^{t_1}(t_1-\tau)\textmd{sinc}((t_1-\tau)\Omega)\big[ f(y(\tau))-f\big(y
   \big(\frac{t_0+t_1}{2}\big)\big)\big]d
\tau }_{s+1-\alpha} \\
&\leq  h^{-1+\alpha}\displaystyle\int_{t_0}^{t_1} |t_1-\tau| \norm{
f(y(\tau))-f\big(y\big(\frac{t_0+t_1}{2}\big)\big)}_{s}d \tau \\
&\leq C h^{-1+\alpha}\displaystyle\int_{t_0}^{t_1} |t_1-\tau| \norm{
 y(\tau) -y\big(\frac{t_0+t_1}{2}\big)}_{s}d \tau \\
&\leq C h^{\alpha} \displaystyle\int_{t_0}^{t_1}|t_1-\tau|   d \tau
\leq C h^{2+\alpha}.\end{array}
\end{equation*}

 $\bullet$   For \eqref{er pr 2-new}, on basis of  the fact that $|\textmd{sinc}(\xi)|^2 \leq \frac{1\cdot \xi}{\xi^2}
= \xi^{-1}$ for $\xi>0$ and the estimate \eqref{norm pro2-1} with
$\sigma=s-\alpha$, one arrives at
\begin{equation*}
\begin{array}[c]{ll}&\norm{\frac{1}{2}h^2\textmd{sinc}^2\big(\frac{1}{2}h\Omega\big)
\big[f\big(y\big(\frac{t_0+t_1}{2}\big)\big)-f(y^{\frac{1}{2}})
\big] }_{s+1-\alpha} \\
\leq&   C   h \norm{
 f\big(y\big(\frac{t_0+t_1}{2}\big)\big)-f(y^{\frac{1}{2}})
}_{s-\alpha} \leq  C   h \norm{y\big(\frac{t_0+t_1}{2}\big) -
y^{\frac{1}{2}} }_{s-\alpha}.
\end{array}
\end{equation*}
On the other hand, by the   the estimate \eqref{norm pro2-2} with
$\sigma'=s+1$, it is  yielded  that
\begin{equation}\label{err-y1-new}
\begin{array}[c]{ll}
&\norm{y\big(\frac{t_0+t_1}{2}\big) - y^{\frac{1}{2}}
}_{s-\alpha}\leq
\norm{y(\frac{t_0+t_1}{2})-y^{\frac{1}{2}}}_{s+2-\alpha}\\
=&\norm{\displaystyle\int_{t_0}^{\frac{t_0+t_1}{2}}(\frac{t_0+t_1}{2}-\tau)\textmd{sinc}((\frac{t_0+t_1}{2}-\tau)\Omega)f(y(\tau))d
\tau}_{s+2-\alpha}\\
\leq&h^{-1+\alpha} \displaystyle\int_{t_0}^{\frac{t_0+t_1}{2}}|
\frac{t_0+t_1}{2}-\tau| \norm{f(y(\tau))}_{s+1} d \tau\\
\leq& h^{-1+\alpha} \displaystyle\int_{t_0}^{\frac{t_0+t_1}{2}}|
\frac{t_0+t_1}{2}-\tau| C d \tau\leq C h^{1+\alpha}.
\end{array}
\end{equation}
Thence
\begin{equation*}
\begin{array}[c]{ll}&\norm{\frac{1}{2}h^2\textmd{sinc}^2\big(\frac{1}{2}h\Omega\big)
\big[f\big(y\big(\frac{t_0+t_1}{2}\big)\big)-f(y^{\frac{1}{2}})
\big] }_{s+1-\alpha} \leq   C h^{2+\alpha}.
\end{array}
\end{equation*}

  $\bullet$  For \eqref{er pr 3-new}, in the light of \eqref{ass2} and the estimate
\eqref{norm pro2-2} with  $\sigma'=s+1$, we have
\begin{equation*}
\begin{array}[c]{ll}&\norm{h^2 \big[\frac{1}{2}\textmd{sinc}^2\big(\frac{1}{2}h\Omega\big)
-\bar{b}_{1}(V)\big]f(y^{ \frac{1}{2}}) }_{s+1-\alpha}\leq
h^{2+\alpha}\norm{f(y^{ \frac{1}{2}}) }_{s+1} \leq   C h^{2+\alpha}.
\end{array}
\end{equation*}

 From  the above bounds of   \eqref{er pr 1-new}-\eqref{er pr 3-new},
it follows that $\norm{y(t_1)-y^1 }_{s+1-\alpha} \leq   C
h^{2+\alpha}.$

\vskip2mm \textbf{(II) Local error of $\dot{y}(t_1)-\dot{y}^1$.}
 The expression of $\dot{y}(t_1)-\dot{y}^1$ is
\begin{eqnarray}
   &\ & \dot{y}(t_1)- \dot{y}^1  \nonumber \\
   &= & \displaystyle\int_{t_0}^{t_1}[\cos((t_1-\tau)\Omega)-I_d]f(y(\tau))d \tau  \label{dy1 er 1-new}\\
    &+ & \displaystyle\int_{t_0}^{t_1} f(y(\tau))d \tau-h
  f\big(y \big(\frac{t_0+t_1}{2}\big)\big) \label{dy1 er 2-new}\\
    &+ & h
  f\big(y \big(\frac{t_0+t_1}{2}\big)\big)  -h
 f(y^{ \frac{1}{2}})\label{dy1 er 3-new}\\
     &+ & h
  (I_d- b_{1}(V))
f(y^{ \frac{1}{2}}).\label{dy1 er 4-new}
\end{eqnarray}

   $\bullet$  Bound of \eqref{dy1 er 1-new} is obtained in the same way as the
bound of \eqref{dy1 er 1}.

  $\bullet$  Bound of \eqref{dy1 er 2-new}. Since $1 \leq \xi^{ \alpha} + \xi^{-1+ \alpha}$ for  $\xi >
0$, \eqref{dy1 er 2-new} can be formulated as
\begin{equation*}
\begin{array}[c]{ll}
&\norm{\displaystyle\int_{t_0}^{t_1} f(y(\tau))d \tau-h
  f\big(y \big(\frac{t_0+t_1}{2}\big)\big)}_{s-\alpha} \\
  \leq& h^{ \alpha}\norm{\displaystyle\int_{t_0}^{t_1} f(y(\tau))d \tau-h
  f\big(y \big(\frac{t_0+t_1}{2}\big)\big)}_{s}  +  h^{-1+\alpha}\norm{\displaystyle\int_{t_0}^{t_1} f(y(\tau))d \tau-h
  f\big(y \big(\frac{t_0+t_1}{2}\big)\big)}_{s-1}.
\end{array}
\end{equation*}
It follows from \eqref{err-f01} that
$\norm{\displaystyle\int_{t_0}^{t_1} f(y(\tau))d \tau-h
  f\big(y \big(\frac{t_0+t_1}{2}\big)\big)}_{s}   \leq Ch^2.$
For an estimate in the norm $\norm{\cdot}_{s-1}$, by the Peano
kernel theorem and \eqref{norm pro3-2},  we   obtain
$$\norm{\displaystyle\int_{t_0}^{t_1} f(y(\tau))d \tau-h
  f\big(y \big(\frac{t_0+t_1}{2}\big)\big)}_{s-1}
  =h^3\norm{\displaystyle\int_{t_0}^{t_1} K_2(\sigma) \frac{d^2}{dt^2} f(y(t_0+\sigma h))d \tau }_{s-1} \leq
  Ch^3,$$
where $K_2(\sigma)$ is the second-order Peano kernel of the
mid-point rule. Thus, one gets
$$\norm{\displaystyle\int_{t_0}^{t_1} f(y(\tau))d \tau-h
  f\big(y \big(\frac{t_0+t_1}{2}\big)\big)}_{s-\alpha}\leq C h^{2+\alpha}.$$

 $\bullet$   Bound of \eqref{dy1 er 3-new}.
By \eqref{err-y1-new} and \eqref{norm pro2-1} with
$\sigma=s-\alpha$, one has
$$\norm{h
  f\big(y \big(\frac{t_0+t_1}{2}\big)\big)  -h
 f(y^{ \frac{1}{2}}) }_{s-\alpha}\leq C h\norm{
   y \big(\frac{t_0+t_1}{2}\big)- y^{ \frac{1}{2}} }_{s-\alpha}\leq C h^{2+\alpha}.$$

  $\bullet$  Bound of \eqref{dy1 er 4-new}.  From  \eqref{fhaf1 bound} and
\eqref{ass3}, it follows immediately that
$$\norm{h (I_d- b_{1}(V))
f(y^{ \frac{1}{2}})}_{s-\alpha}  \leq C h^{2+\alpha} \norm{  f(y^{
\frac{1}{2}})}_{s+1}   \leq C h^{2+\alpha}.$$

In summary, we have $\norm{\dot{y}(t_1)-\dot{y}^1 }_{s-\alpha} \leq
C h^{2+\alpha} $ and the proof is complete.
\end{proof}

\begin{lem}\label{lem-stabi-new}
(Stability   in $H^{s+1-\alpha}\times H^{s-\alpha}$ for
$0<\alpha\leq 1$.) Under the conditions of Lemma
\ref{lem-regularity-new},  consider
 different initial values
$(y_0,\dot{y}_0)$ and $(z_0,\dot{z}_0)$ for the ERKN integrator
\eqref{methods}. If
$$|||(y_0,\dot{y}_0)|||_{s}\leq M\qquad \textmd{and} \qquad |||(z_0,\dot{z}_0)|||_{s}\leq M,$$
then we have
$$|||(y^1,\dot{y}^1)-(z^1,\dot{z}^1)|||_{s-\alpha}\leq (1+Ch) |||(y^0,\dot{y}^0)-(z^0,\dot{z}^0)|||_{s-\alpha},$$
where the constant $C$ depends only on $M, s, p,$ and $c$.
\end{lem}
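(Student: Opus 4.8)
The plan is to mirror the proof of Lemma \ref{lem-stabi}, with one essential change dictated by the fact that for $0<\alpha\le1$ the monotonicity of the norms \eqref{norm} runs the opposite way: every difference of nonlinearities must be kept in the \emph{low} indices $s+1-\alpha$ and $s-\alpha$ throughout. Writing $\Delta_0:=|||(y^0,\dot y^0)-(z^0,\dot z^0)|||_{s-\alpha}$ and $g:=f(y^{\frac12})-f(z^{\frac12})$, I would first apply the norm identity for $R(h)$ recalled in the proof of Lemma \ref{lem-stabi} together with the scheme \eqref{methods}; this produces the same splitting, with leading term $\Delta_0$ and the three corrections \eqref{sta er 1}--\eqref{sta er 3}. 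The correction \eqref{sta er 1} is treated verbatim: $h|\dot y^0_0-\dot z^0_0|\le h\norm{\dot y^0-\dot z^0}_{s-\alpha}\le h\Delta_0$. It then remains to bound \eqref{sta er 2} and \eqref{sta er 3} by $Ch\Delta_0$.

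As a preparatory step I would record the half-step estimates $\norm{y^{\frac12}-z^{\frac12}}_{s+1-\alpha}\le C\Delta_0$ and $\norm{y^{\frac12}-z^{\frac12}}_{s-\alpha}\le C\Delta_0$. These follow from $y^{\frac12}-z^{\frac12}=\cos(\tfrac12 h\Omega)(y^0-z^0)+\tfrac12 h\,\sinc(\tfrac12 h\Omega)(\dot y^0-\dot z^0)$, using $|\cos|\le1$ and the pointwise bound $\tfrac12 h\,|\sinc(\tfrac12 h\omega_j)|\le\langle j\rangle^{-1}$ for $j\neq0$ (and $\le\tfrac12 h$ for $j=0$); the factor $\langle j\rangle^{-1}$ on the velocity part is exactly the one index that separates $s+1-\alpha$ from $s-\alpha$. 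Since $\norm{y^{\frac12}}_{s+1},\norm{z^{\frac12}}_{s+1}\le2M$ as in the proof of Lemma \ref{lem-regularity}, the local Lipschitz property \eqref{norm pro2-1} of Proposition \ref{pro thm 2} (applied at $\sigma=s+1-\alpha$ and at $\sigma=s-\alpha$, both with $\sigma'=s+1$) then gives $\norm{g}_{s+1-\alpha}\le C\Delta_0$ and $\norm{g}_{s-\alpha}\le C\Delta_0$.

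For \eqref{sta er 2} I would use that, under Assumption \ref{thm ass1} at $\beta=0$, the bound \eqref{ass1} forces $\bar{b}_1$ to be uniformly bounded, $|\bar{b}_1(\xi^2)|\le c$; hence $h^2\norm{\bar{b}_1(V)g}_{s+1-\alpha}\le ch^2\norm{g}_{s+1-\alpha}\le Ch^2\Delta_0\le Ch\Delta_0$ for $h\le1$. For \eqref{sta er 3} I would split $b_1(V)=I_d+(b_1(V)-I_d)$: the identity part contributes $h\norm{g}_{s-\alpha}\le Ch\Delta_0$, while for the remainder I would invoke the third estimate of Proposition \ref{pro norm } at index $s-\alpha$ with $\beta=0$, namely $\norm{(I_d-b_1(V))g}_{s-\alpha}\le ch\norm{g}_{s+1-\alpha}$, so that $h\norm{(b_1(V)-I_d)g}_{s-\alpha}\le ch^2\norm{g}_{s+1-\alpha}\le Ch^2\Delta_0\le Ch\Delta_0$. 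Summing the leading term $\Delta_0$ with the three corrections yields $|||(y^1,\dot y^1)-(z^1,\dot z^1)|||_{s-\alpha}\le(1+Ch)\Delta_0$, as claimed.

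The step I expect to be the main obstacle is precisely the one that forces the use of the $\beta=0$ data (which is why Theorem \ref{thm main-re} assumes Assumption \ref{thm ass1} at $\beta=0$ in addition to $\beta=\alpha$). In Lemma \ref{lem-stabi} one is free to pass from $\bar{b}_1(V)g$ and $b_1(V)g$ to the high indices $s+1$ and $s$, because for $\alpha\le0$ one has $\norm{\cdot}_{s+1}\le\norm{\cdot}_{s+1-\alpha}$ and $\norm{\cdot}_{s}\le\norm{\cdot}_{s-\alpha}$, so the high-index difference $\norm{y^0-z^0}_{s+1}$ is still controlled by the target norm. For $0<\alpha\le1$ these inequalities reverse, so any estimate that exposes the high-frequency amplification of $\bar{b}_1$ or of $b_1$ would bring in $\norm{y^0-z^0}_{s+1}$, which is \emph{not} bounded by $\Delta_0$ and cannot be recovered without an inverse estimate, i.e. a step-size restriction coupling $h$ to the discretization parameter, which we must avoid since the constants may not depend on it. Boundedness of $\bar{b}_1$ removes the amplification altogether in \eqref{sta er 2}, while the gain $h^{1+\beta}=h$ in Proposition \ref{pro norm } at $\beta=0$ exactly compensates the one-index gap in \eqref{sta er 3}; this is what lets the whole estimate remain in the low norms and deliver the clean factor $1+Ch$.
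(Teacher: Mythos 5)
Your proof is correct, but at the decisive step it takes a genuinely different route from the paper's --- and the difference matters. The paper bounds \eqref{sta er 2-new} and \eqref{sta er 3-new} using only the $\beta=\alpha$ data, $|\bar{b}_1(\xi)|\leq\frac12+c\xi^{\alpha}$ from \eqref{ass2} and $|b_1(\xi)|\leq 1+c\xi^{1+\alpha}$ from \eqref{ass3}; this exposes $\norm{f(y^{1/2})-f(z^{1/2})}_{s+1}$, which is then channelled through $\norm{y^{1/2}-z^{1/2}}_{s+1}$ so that the final estimates read $C(h+h^{2+\alpha})\big(\norm{y^0-z^0}_{s+1}+\norm{\dot y^0-\dot z^0}_{s}\big)$. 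For $0<\alpha\leq 1$ these are the \emph{high} norms, which are not controlled by $|||(y^0,\dot y^0)-(z^0,\dot z^0)|||_{s-\alpha}$, so the paper's computation as written yields only $|||(y^0,\dot y^0)-(z^0,\dot z^0)|||_{s-\alpha}+Ch\,|||(y^0,\dot y^0)-(z^0,\dot z^0)|||_{s}$ rather than the stated multiplicative bound $(1+Ch)|||(y^0,\dot y^0)-(z^0,\dot z^0)|||_{s-\alpha}$, which is what the induction in the proof of Theorem \ref{thm main-re} actually requires. Your version avoids this: the half-step difference is estimated directly at the indices $s+1-\alpha$ and $s-\alpha$ via $\frac12 h|\mathrm{sinc}(\frac12 h\omega_j)|\leq\langle j\rangle^{-1}$, the Lipschitz bound \eqref{norm pro2-1} is applied with $\sigma=s+1-\alpha$ and $\sigma=s-\alpha$ (both legitimate since $\sigma'=s+1\geq|\sigma|$ and the half-step values are bounded in $H^{s+1}$ as in Lemma \ref{lem-regularity}), and the filter terms are handled with the uniform bound $|\bar b_1(\xi^2)|\leq c$ from \eqref{ass1} at $\beta=0$ and the $\beta=0$ instance of the third estimate of Proposition \ref{pro norm }, so every term stays in the low norms and the clean factor $1+Ch$ comes out. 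The one caveat, which you flag yourself, is that this requires Assumption \ref{thm ass1} at $\beta=0$, which is not among the hypotheses inherited from Lemma \ref{lem-regularity-new} (only $\beta=\alpha$); since Theorem \ref{thm main-re} assumes the $\beta=0$ case anyway and all integrators of Table \ref{praERKN} satisfy it, this is harmless where the lemma is used, but strictly you are proving the statement under a mildly strengthened hypothesis. Your closing diagnosis --- that any estimate exposing the high-frequency amplification of $\bar b_1$ or $b_1$ drags in $\norm{y^0-z^0}_{s+1}$, which cannot be recovered from the target norm without an inverse estimate --- is exactly the point at which the paper's own argument, as printed, falls short.
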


\begin{proof}
Consider
\begin{eqnarray}
  |||(y^1,\dot{y}^1)-(z^1,\dot{z}^1)|||_{s-\alpha} &\leq&
  |||(y^0,\dot{y}^0)-(z^0,\dot{z}^0)|||_{s-\alpha}
    +   h |\dot{y}^0_0-\dot{z}^0_0| \nonumber   \\
    &+ & h^2\norm{\bar{b}_1(V)  \big(f(y^{\frac{1}{2}}) -f(z^{\frac{1}{2}})\big) }_{s+1-\alpha} \label{sta er 2-new}\\
    &+ & h \norm{b_1(V)  \big(f(y^{\frac{1}{2}}) -f(z^{\frac{1}{2}})\big) }_{s-\alpha}. \label{sta er 3-new}
\end{eqnarray}
It follows from \eqref{ass2} that
$|\bar{b}_1(\xi)|\leq\frac{1}{2}+c\xi^{\alpha}$. Then the bound of
\eqref{sta er 2-new} is
\begin{equation*}
\begin{array}[c]{ll}&h^2\norm{\bar{b}_1(V)  \big(f(y^{\frac{1}{2}}) -f(z^{\frac{1}{2}})\big)
}_{s+1-\alpha} \leq \frac{1}{2}h^2\norm{ f(y^{\frac{1}{2}})
-f(z^{\frac{1}{2}}) }_{s+1-\alpha}\\
& +\frac{1}{2}h^{2+\alpha}\norm{ f(y^{\frac{1}{2}})
-f(z^{\frac{1}{2}}) }_{s+1}\leq C h^{2} (1/2+h^{\alpha})\norm{
y^{\frac{1}{2}}-z^{\frac{1}{2}} }_{s+1}\\
&\leq C h^{2 }(1/2+h^{\alpha})
 \norm{ y^0-z^0}_{s+1}+ C h^{2 }(1/2+h^{\alpha})
\norm{\dot{y}^0-\dot{z}^0}_{s}.
\end{array}
\end{equation*}
With regard to \eqref{sta er 3-new}, since
$|b_1(\xi)|\leq1+c\xi^{1+\alpha}$ from \eqref{ass3}, we obtain
\begin{equation*}
\begin{array}[c]{ll}&h \norm{b_1(V)   f(y^{\frac{1}{2}}) -f(z^{\frac{1}{2}})  }_{s-\alpha}
\leq   h  \norm{  f(y^{\frac{1}{2}}) -f(z^{\frac{1}{2}})
}_{s-\alpha} +ch^{2+\alpha} \norm{ f(y^{\frac{1}{2}})
-f(z^{\frac{1}{2}})  }_{s+1} \\
&\leq  C (h+h^{2+\alpha})  \norm{  y^0-z^0}_{s+1} +  C
(h+h^{2+\alpha}) \norm{\dot{y}^0-\dot{z}^0}_{s}.
\end{array}
\end{equation*}
The proof is complete by considering the above bounds.
\end{proof}

\vskip4mm
 \textbf{Proof  of Theorem \ref{thm main-re} for $0<\alpha\leq1$.}
\begin{proof} It is noted that this proof is   same to that for $-1\leq\alpha<0$
given in Section \ref{sec-prof1}. One important fact  used  here is
that the numerical solution is bounded in $H^{s+1}\times H^s$, which
is obtained by Remark \ref{re regularity num}. \end{proof}

\section{Error analysis for the spatial semidiscretization by finite differences}
\label{sec:finite differences} For the spatial discretization by
finite differences, here we approximate the derivative $u_{xx}(x,
t)$  by using second-order symmetric differences
$$\frac{u(x+\Delta x, t)-2u(x, t)+u(x-\Delta x, t)}{(\Delta x)^2}\quad \textmd{with} \quad \Delta x=\frac{\pi}{K}.$$
Then inserting  the points $x_k =  \frac{k\pi}{K}$ in the equation
and defining the vector $y = (y_j)_{j\in \mathcal{K}}$ by $u(x_k, t)
= \sum\limits_{j\in\mathcal{K}}
 y_j(t)\mathrm{e}^{\mathrm{i}jx}$ for $k\in \mathcal{K}$, we can  again obtain a system of ODEs of the form \eqref{prob}.
 This system has the same nonlinearity as in \eqref{prob} but different
 frequencies. The new frequencies  are
$$\omega_j=\frac{2}{\Delta x}\mid \sin\big(\frac{j\Delta x}{2}\big)\mid, \quad j\in \mathcal{K},$$
which satisfy $\frac{2}{\pi}|j|\leq \omega_j \leq  |j|. $

Define a new norm
\begin{equation*}
\norm{y}^{new}_s:=\Big(\sum\limits_{ j \in\mathcal{K}}
\max(\omega_j,\omega_{\min})^{2s}|y_j|^2 \Big)^{1/2},
\label{norm-new}%
\end{equation*}
  where $\omega_{\min}$ denotes the minimal nonzero frequency.
The proof of Theorem \ref{thm main-re} can be  transferred  with the
new norm  to the new system and thus we have the following error
bound of ERKN integrators.

\begin{theo}\label{thm main-re-new}
Assume that
\begin{itemize}
\item   $c\geq 1$ and $s\geq 0$;

\item  the exact solution
$(y(t), \dot{y}(t))$ of the spatial semidiscretization by finite
differences
   satisfies
\begin{equation*}\norm{y(t)}^{new}_{s+1}+ \norm{\dot{y}(t)}^{new}_{s}\leq M \qquad \textmd{for} \qquad 0\leq
t-t_0 \leq T;
\end{equation*}

\item Assumption \ref{thm ass1} is true with constant $c$ for $\beta= 0$
and $\beta= \alpha$ with   $-1 \leq\alpha\leq 1$.

\end{itemize}
Then, there exists $h_0 > 0$ such that for  $0<h \leq h_0$, the
error bound of ERKN integrators for the spatial semidiscretization
by finite differences  is $$\norm{y(t_n)-y^n}^{new}_{s+1-\alpha}+
\norm{\dot{y}(t_n)-\dot{y}^n}^{new}_{s-\alpha}\leq Ch^{1+\alpha}
\qquad \textmd{for} \qquad 0\leq t_n-t_0=nh \leq T,$$  where the
constants $C$ and $h_0$ depend   on $M,s,p,T,$ and $c$.
\end{theo}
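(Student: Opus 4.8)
The plan is to show that Theorem~\ref{thm main-re-new} follows from the machinery already developed for the spectral semidiscretization, since the finite-difference semidiscretization produces a system of exactly the same form \eqref{prob}---same nonlinearity $f$ given by \eqref{f}, same structure $\ddot{y}=-\Omega^2 y + f(y)$---but with a different diagonal frequency matrix $\Omega=\textmd{diag}(\omega_j)$ where now $\omega_j=\frac{2}{\Delta x}|\sin(j\Delta x/2)|$. The entire proof of Theorem~\ref{thm main-re} in Sections~\ref{sec-prof1} and~\ref{sec-prof2} is organized around the three structural ingredients---regularity over one step (Lemmas~\ref{lem-regularity}, \ref{lem-regularity-new}), the local error estimate (Lemmas~\ref{lem-local error}, \ref{lem-local error-2}), and stability (Lemmas~\ref{lem-stabi}, \ref{lem-stabi-new})---combined by the discrete Gronwall induction \eqref{er ma re 1}--\eqref{er ma re 2}. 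None of that combinatorial machinery refers to the specific values $\omega_j=|j|$; it uses only that $\Omega$ is diagonal and nonnegative, that $f$ satisfies Propositions~\ref{pro thm 2} and~\ref{pro thm 3}, and that the coefficient functions $b_1,\bar{b}_1$ satisfy Assumption~\ref{thm ass1}. So the whole structure transfers verbatim once the underlying norm-estimates are re-established for the new frequencies in the new norm $\norm{\cdot}^{new}_s$.

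The key step, therefore, is to verify that every inequality in the proofs of the three lemmas remains valid when $\norm{\cdot}_s$ is replaced by $\norm{\cdot}^{new}_s$ and $\omega_j=|j|$ is replaced by the finite-difference frequencies. First I would record the two-sided spectral comparison $\frac{2}{\pi}|j|\le\omega_j\le|j|$, which guarantees that $\max(\omega_j,\omega_{\min})$ and $\langle j\rangle$ are comparable up to the fixed constant $\pi/2$; hence $\norm{\cdot}^{new}_s$ and $\norm{\cdot}_s$ are equivalent norms with equivalence constants independent of $K$, so that Propositions~\ref{pro thm 2}, \ref{pro thm 3} and the multiplier estimate of Proposition~\ref{pro norm } continue to hold (with $c$ absorbing the $\pi/2$ factors). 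Next I would confirm the scalar bounds on the symbols that drive the local-error and regularity arguments---the estimates $|\textmd{sinc}(\xi)|\le\xi^{\alpha}$ for $-1\le\alpha\le 0$, $|\textmd{sinc}(\xi)|\le\xi^{-1+\alpha}$ for $0<\alpha\le 1$, $|\cos(\xi)-1|\le 2\xi^{1+\alpha}$, and $|\textmd{sinc}(\xi)|^2\le\xi^{-1}$---are purely scalar facts about the functions $\cos$ and $\textmd{sinc}$ evaluated at $\xi=h\omega_j$; they do not care how $\omega_j$ depends on $j$, only that $\xi=h\omega_j>0$. Assumption~\ref{thm ass1} is likewise imposed directly on $\xi=h\omega_j$, so it holds for the new frequencies by hypothesis. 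Thus each of the three lemmas re-proves itself line by line in the new norm with identical constants.

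With the three lemmas in hand for the new norm, the concluding Gronwall induction of Section~\ref{sec-prof1} (the telescoping estimate \eqref{ind-fact} together with $m^{k+1}\ge(m-1)^{k+1}+k(m-1)^k$) is entirely abstract and reproduces the bound $|||(y^n,\dot{y}^n)-(y(t_n),\dot{y}(t_n))|||_{s-\alpha}^{new}\le C_1 e^{C_2 nh} n h^{2+\alpha}$ for $-1\le\alpha\le 0$, followed by the second-stage bootstrapping for $0<\alpha\le 1$ that relies on the uniform numerical-solution bound of Remark~\ref{re regularity num}, now read in $\norm{\cdot}^{new}$. Evaluating at $nh\le T$ yields the claimed $Ch^{1+\alpha}$ estimate. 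The main obstacle---and the only point requiring genuine care---is the norm-equivalence bookkeeping: one must check that the operator $R(t)$ appearing in the variation-of-constants formula \eqref{vari cons}, and in particular the identity $|||(w,\dot{w})|||_{\sigma}=|||(v,\dot{v})+h(\tilde{v},0)|||_{\sigma}$ used in the stability proof (with $\tilde{v}_j=\delta_{j,0}\dot{v}_j$), continues to hold in the new norm. This identity hinges on the fact that at the zero frequency $\cos$ and $\textmd{sinc}$ degenerate to $1$ while $\Omega\sin$ vanishes; since the finite-difference frequencies still satisfy $\omega_0=0$ (the constant mode) and $\omega_j>0$ otherwise, the same exact-preservation-of-$R$-norm argument of \cite{Gauckler15} goes through with $\max(\omega_j,\omega_{\min})$ in place of $\langle j\rangle$, and no regularity is lost. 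Once this is confirmed the transfer is complete, and I would simply remark that the proof is \emph{mutatis mutandis} identical to that of Theorem~\ref{thm main-re}.
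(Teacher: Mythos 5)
Your proposal is correct and follows essentially the same route as the paper, which simply observes that the proof of Theorem \ref{thm main-re} transfers to the finite-difference system with the norm $\norm{\cdot}^{new}_s$ replacing $\norm{\cdot}_s$; you in fact supply more justification (the uniform comparability of $\max(\omega_j,\omega_{\min})$ with $\langle j\rangle$ via $\frac{2}{\pi}|j|\leq\omega_j\leq|j|$, the purely scalar nature of the symbol bounds at $\xi=h\omega_j$, and the persistence of the zero-mode identity in the stability lemma) than the paper itself gives.
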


\section{Numerical experiments}
\label{sec:experiments} In this section, we carry out a numerical
experiment to illustrate the error bounds of  two one-stage explicit
ERKN integrators.

\begin{figure}[ptb]
\centering\tabcolsep=2mm
\begin{tabular}
[l]{lll}%
\includegraphics[width=6cm,height=3cm]{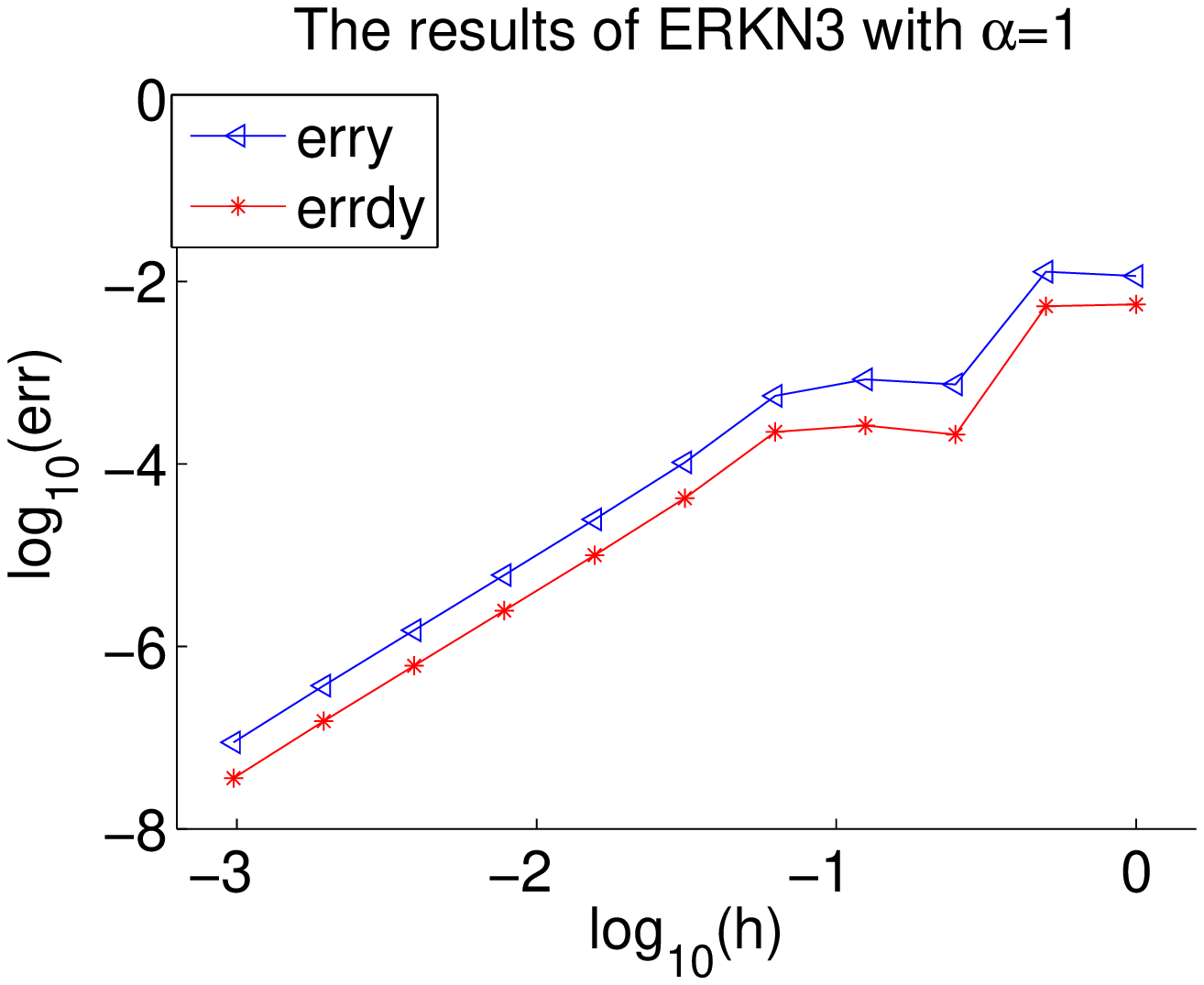}
\includegraphics[width=6cm,height=3cm]{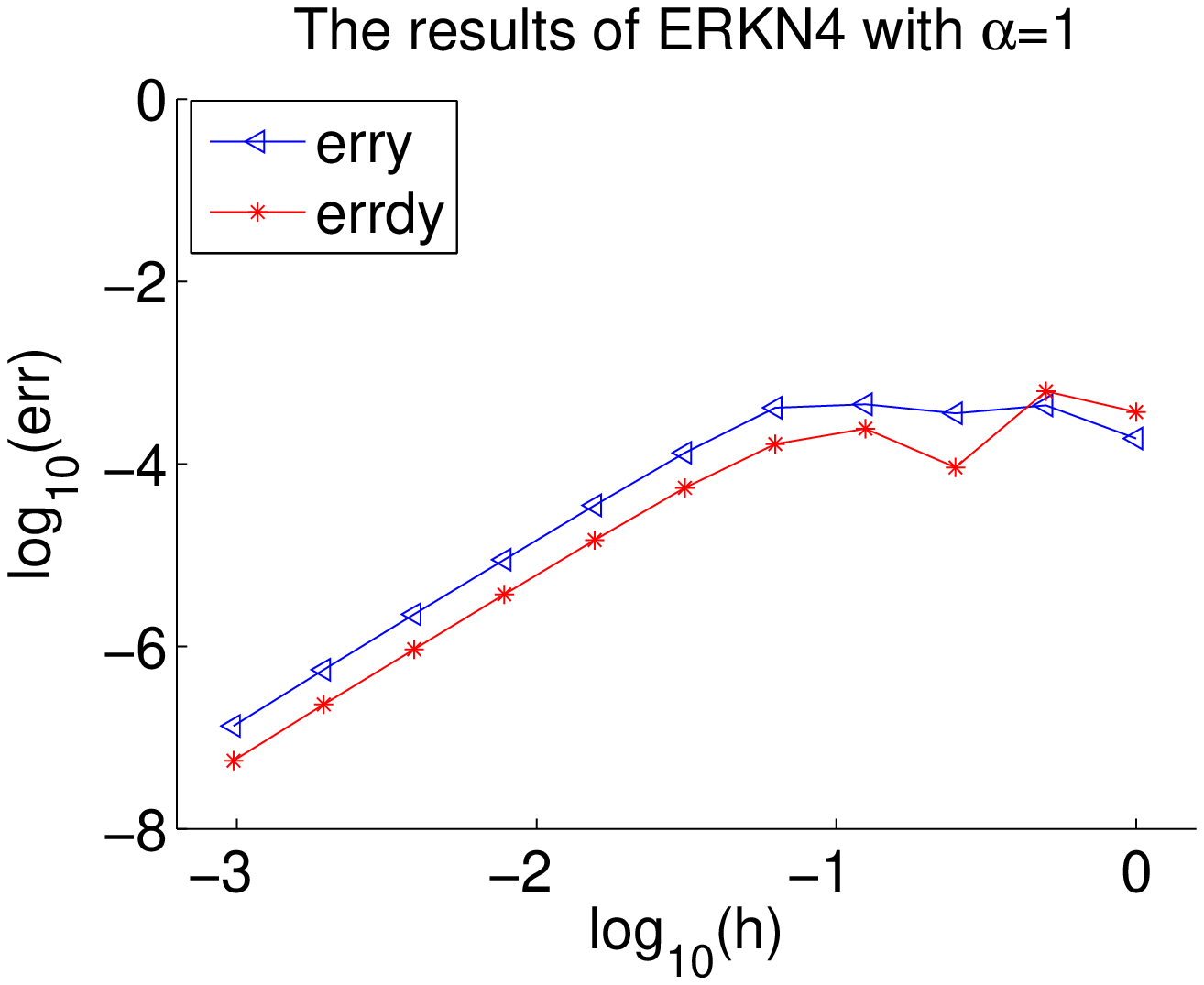}  \\
\includegraphics[width=6cm,height=3cm]{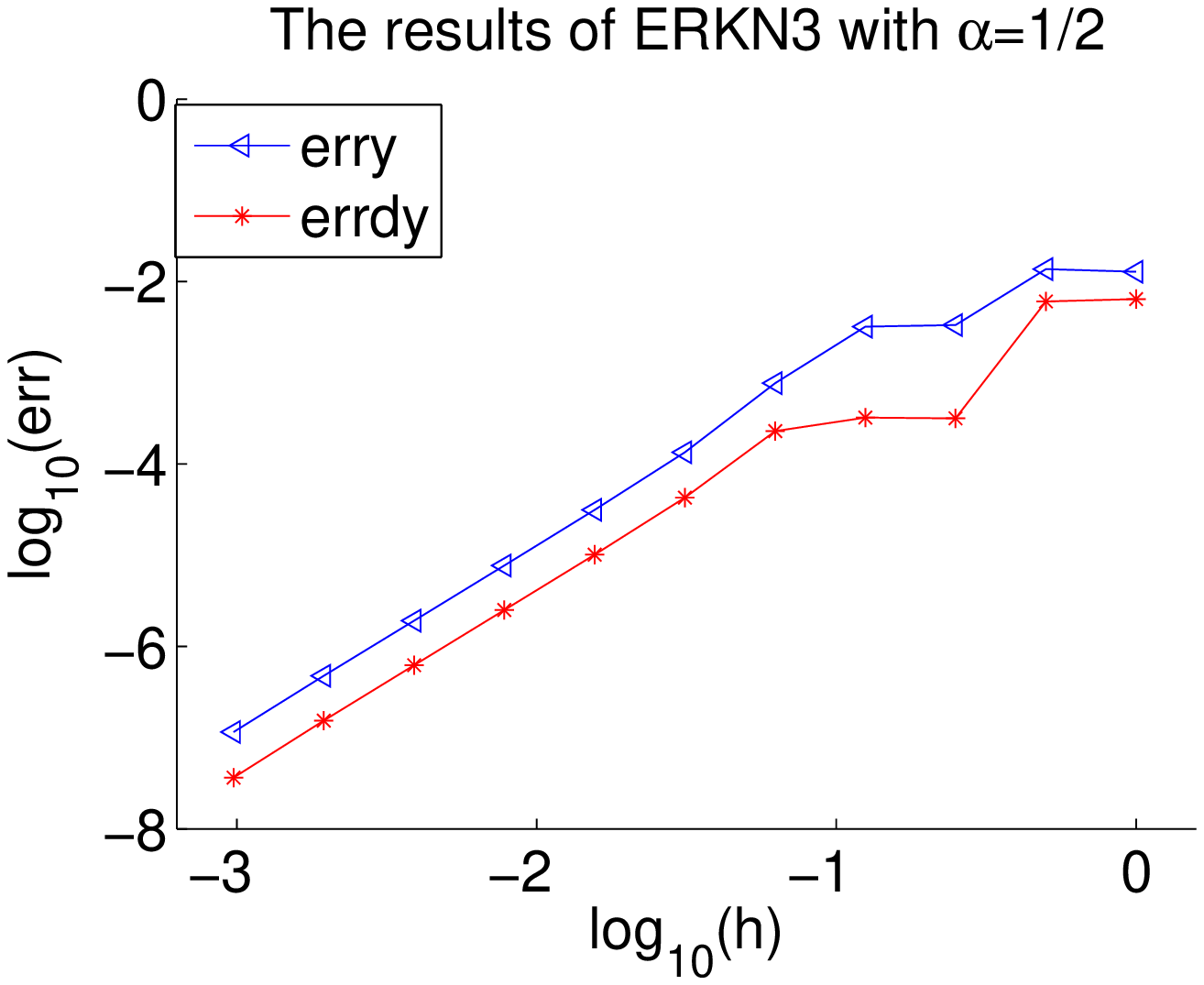}
\includegraphics[width=6cm,height=3cm]{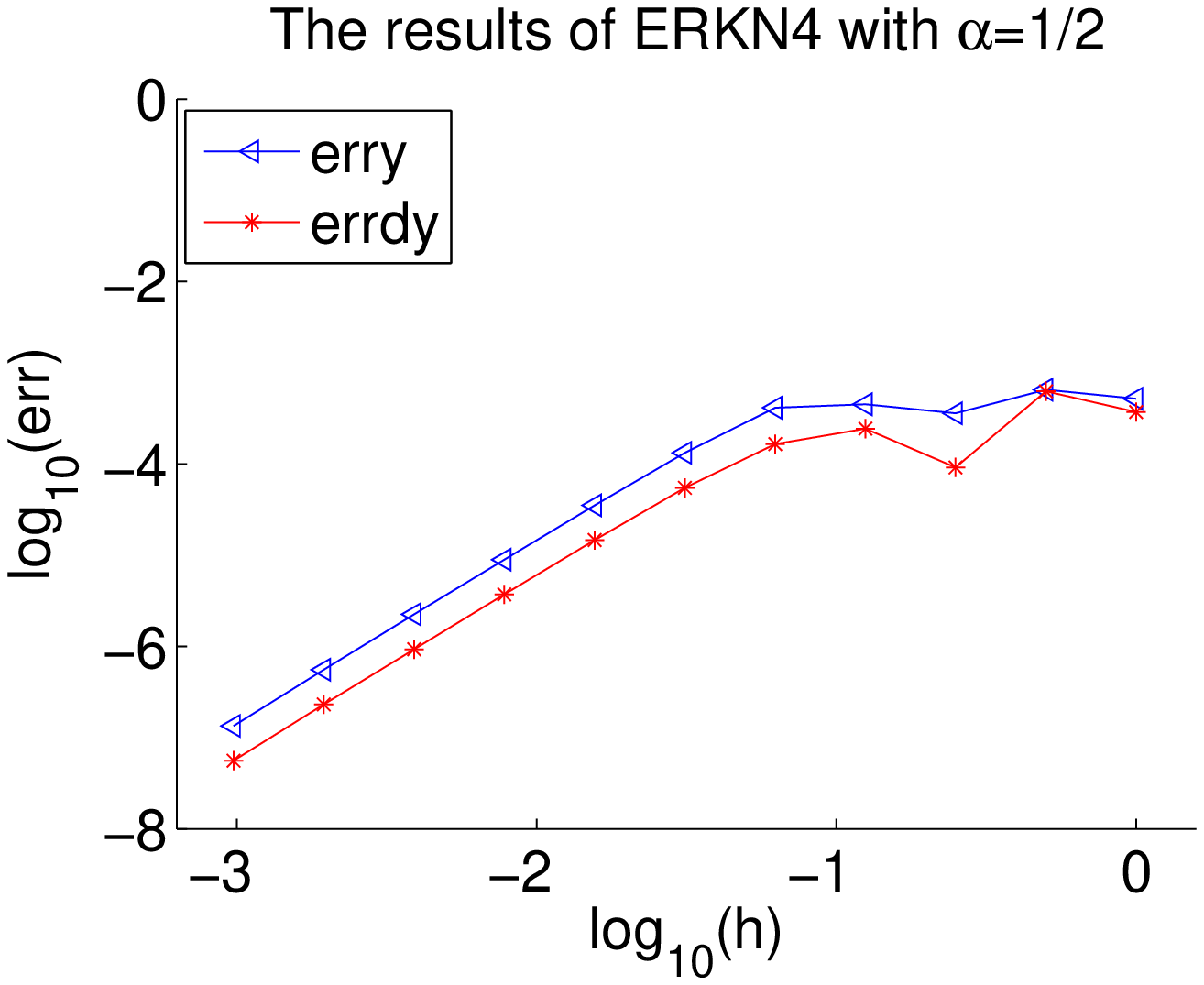}  \\
\includegraphics[width=6cm,height=3cm]{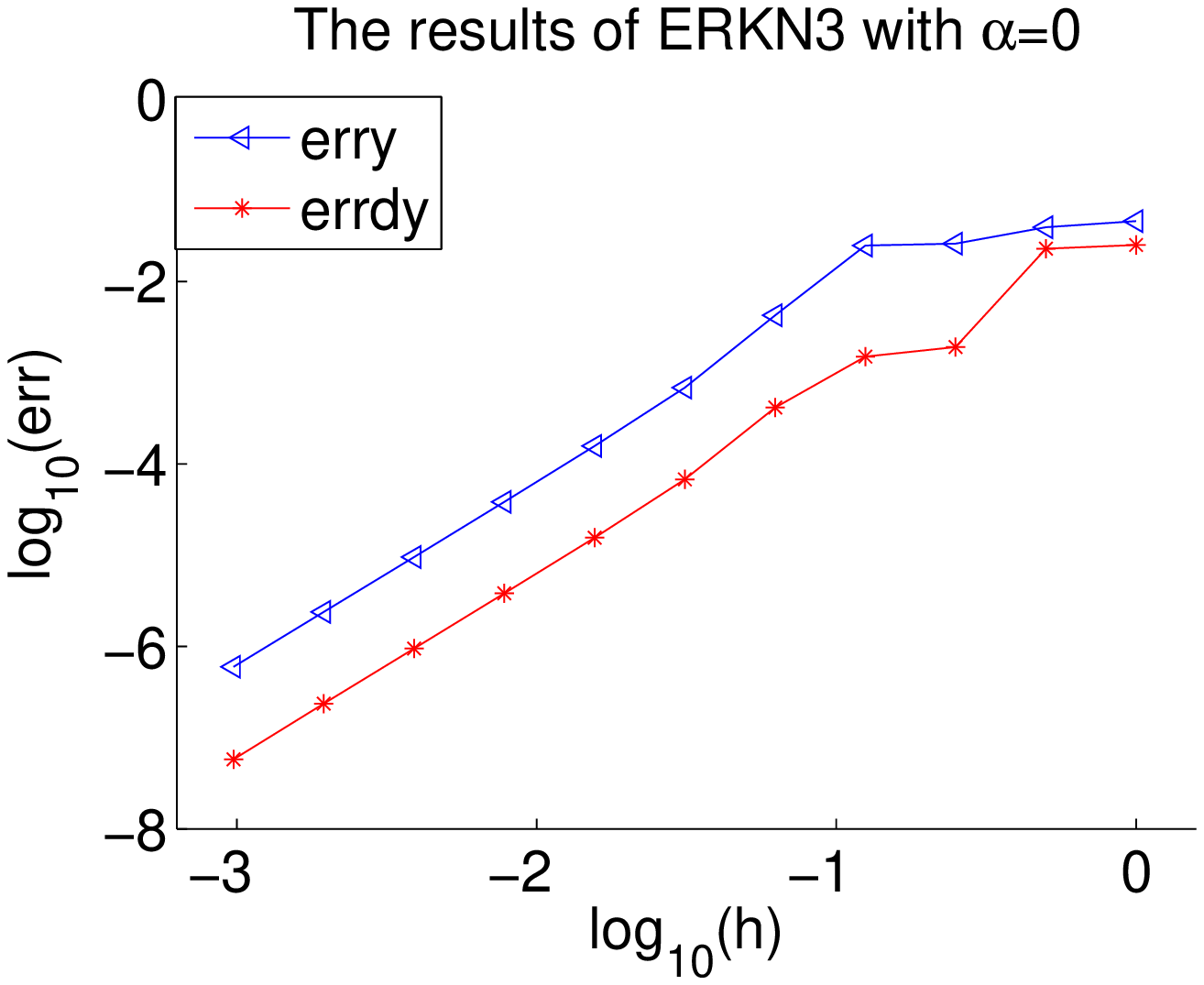}
\includegraphics[width=6cm,height=3cm]{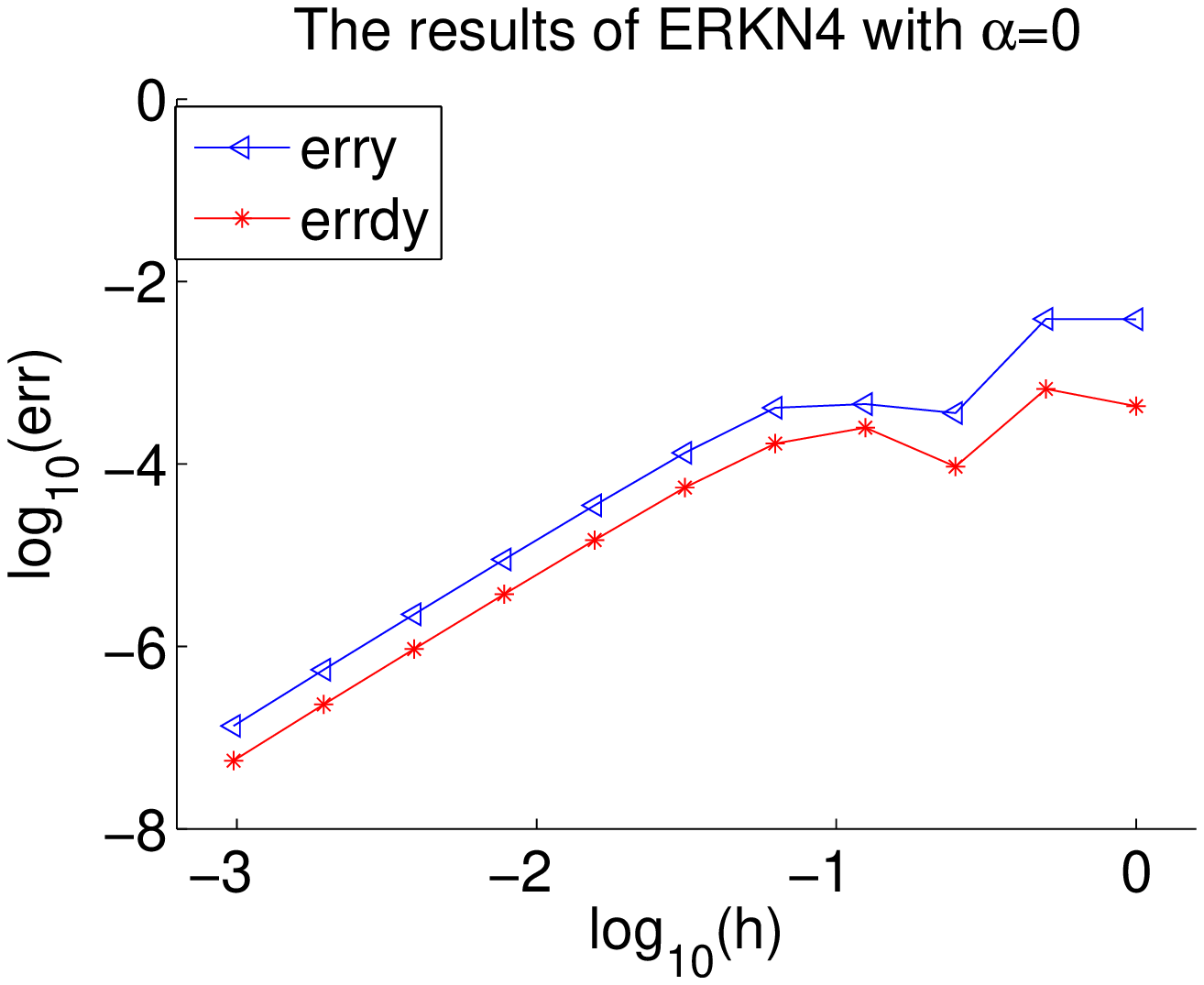}  \\
\includegraphics[width=6cm,height=3cm]{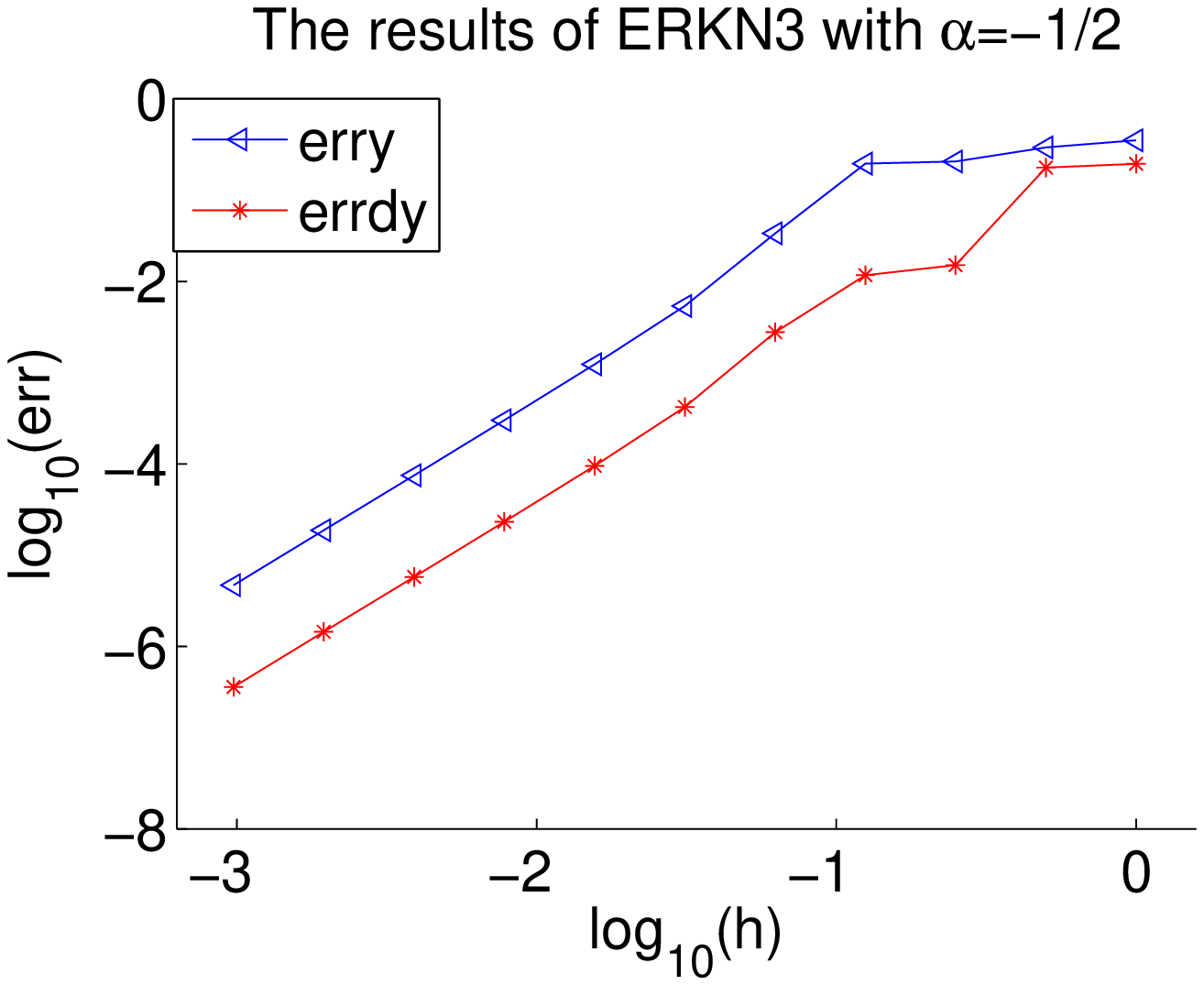}
\includegraphics[width=6cm,height=3cm]{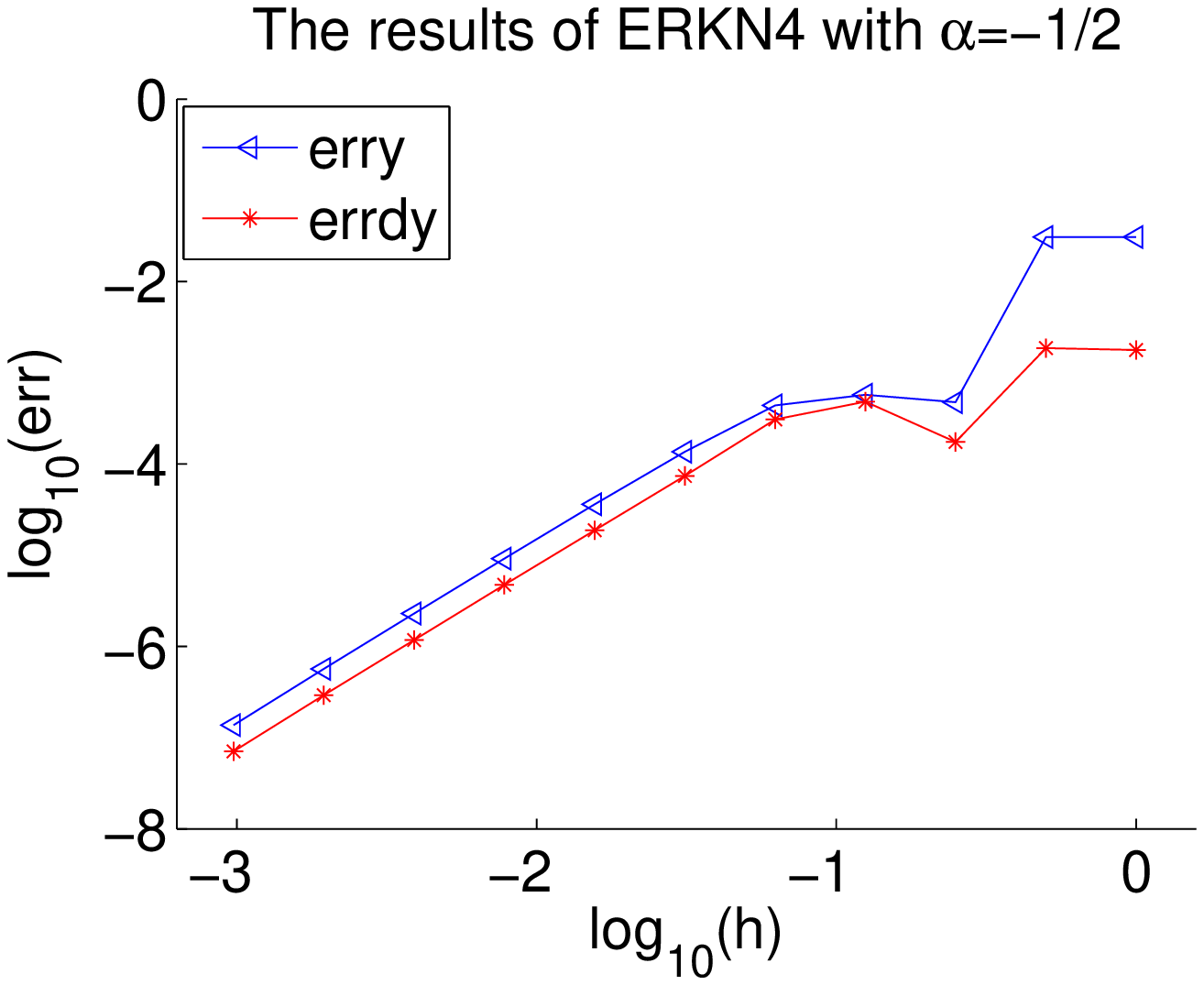}  \\
\includegraphics[width=6cm,height=3cm]{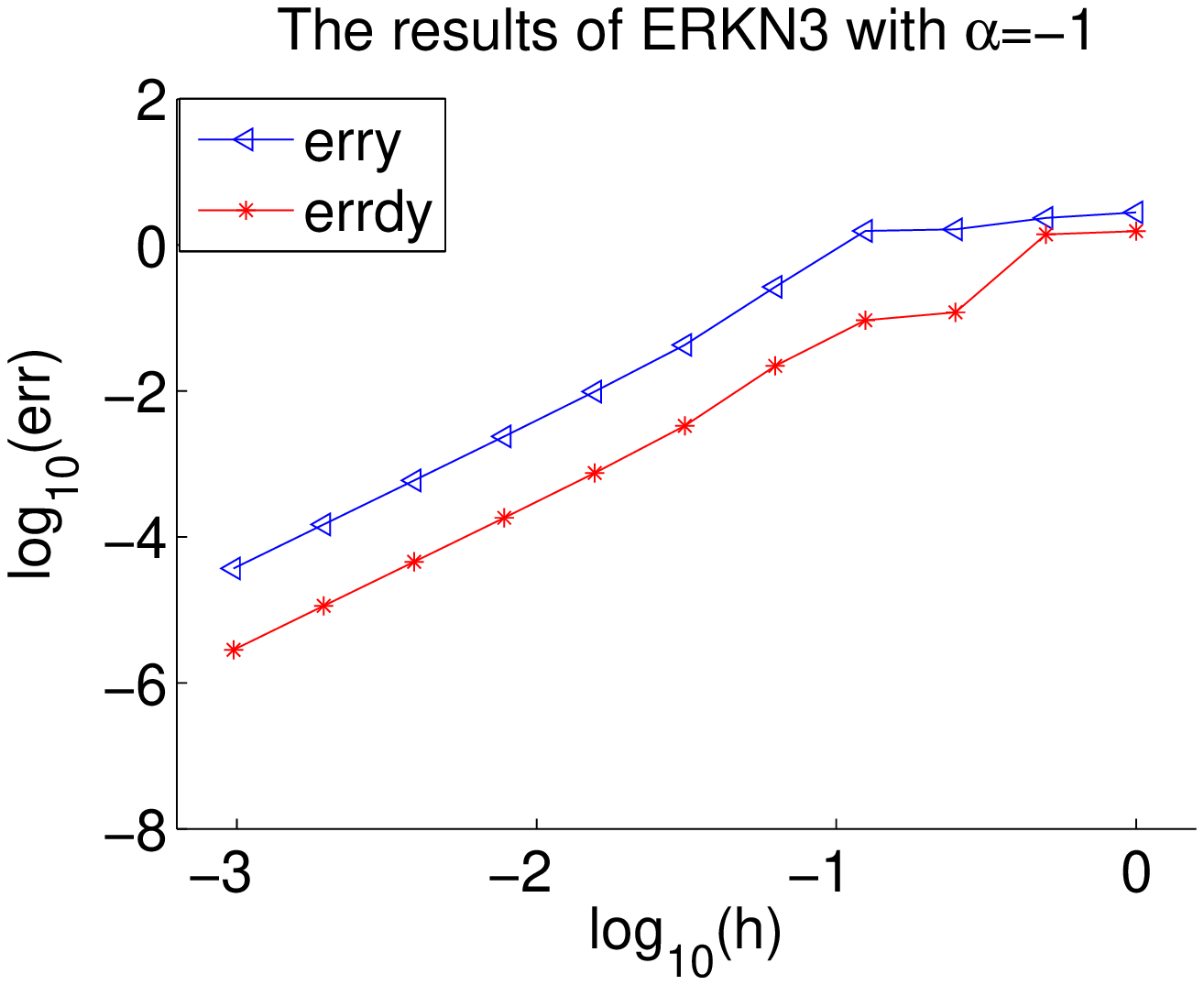}
\includegraphics[width=6cm,height=3cm]{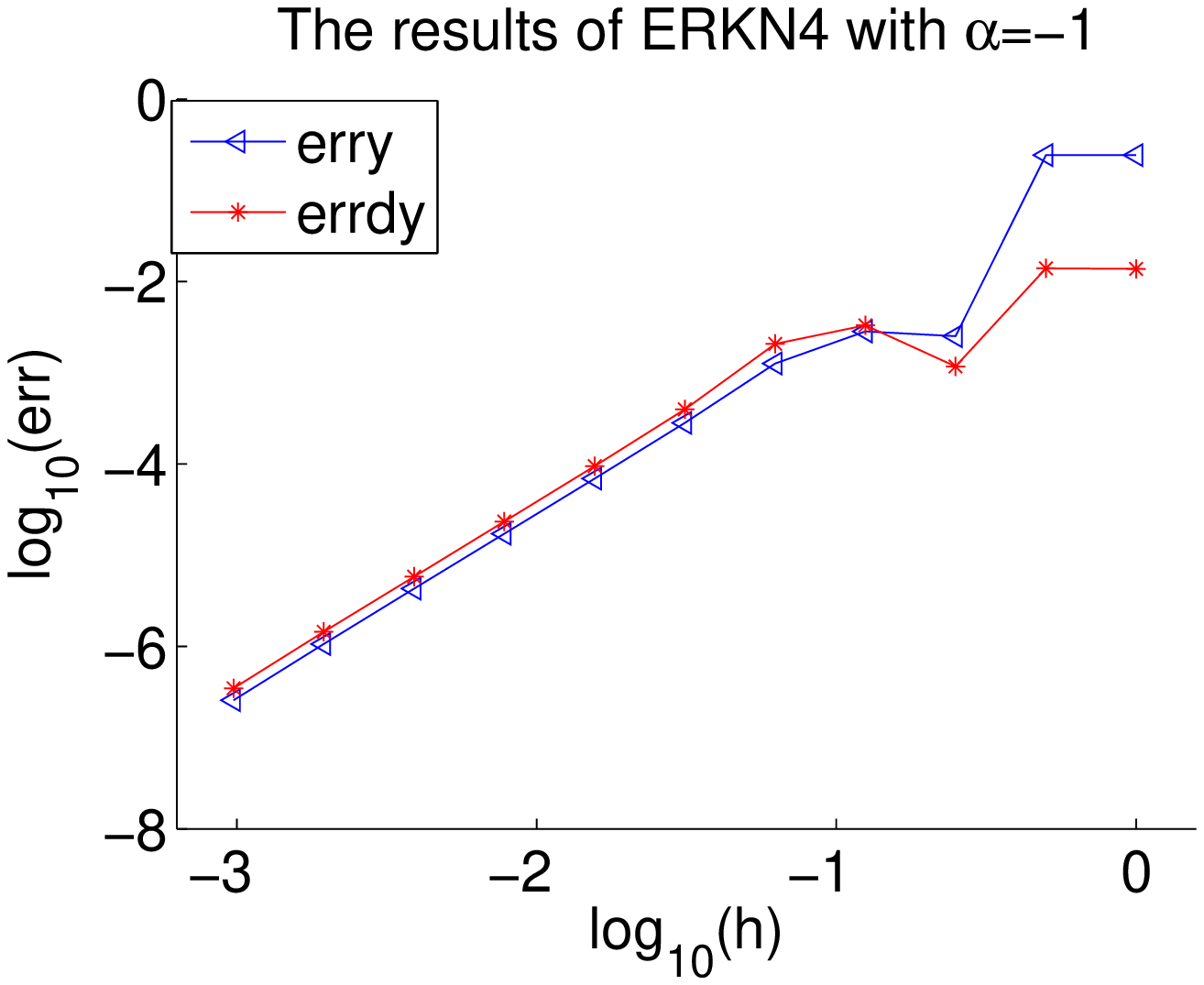}
\end{tabular}
\caption{The logarithm of the  errors    against the logarithm of stepsizes for $K=2^6$.}%
\label{fig0}%
\end{figure}
We consider $p=2$ for the problem \eqref{wave equa} and use the
spatial discretization with $K=2^6,2^8$. Following
\cite{Gauckler15}, we choose the initial conditions for the
coefficients $y_j(t_0)$ and $ \dot{y}_j(t_0)$ on the complex unit
circle and then scale them by $\langle j\rangle^{-1.51}$ and
$\langle j\rangle^{-0.51}$, respectively.  It is noted that these
complex numbers are chosen such that  the corresponding
trigonometric polynomial \eqref{trigo pol} takes real values in the
collocation points. In this way, the corresponding initial values
satisfy the condition \eqref{thm con1} of Theorem \ref{thm main-re}
at time $t = t_0$ uniformly in $K$ for $s = 0$. For the
discretization in time, we choose ERKN3 and ERKN 4 whose
coefficients are displayed  in Table \ref{praERKN}. The problem is
solved  in the interval $[0,10]$ with stepsizes $h=1/2^{j}$ for
$j=0,1,\ldots,10.$ We measure the errors
$$\textmd{erry}=\norm{y(t_n)-y^n}_{1-\alpha},\ \ \textmd{errdy}=\norm{\dot{y}(t_n)-\dot{y}^n}_{-\alpha}$$
in different Sobolev norms $\alpha=1,\frac{1}{2},0,-\frac{1}{2},-1.$
We plot the logarithm of the  errors    against the logarithm of
stepsizes for these two integrators and see Figures \ref{fig0} and
\ref{fig1} for the results of $K=2^6$ and $K=2^8$, respectively.
From these results, it follows that the convergent order is not
uniform for $\alpha$ (especially for the case $K=2^8$) and the
observed order of convergence is about $1 + \alpha$, which soundly
supports the result given in Theorem \ref{thm main-re}.

\begin{figure}[ptb]
\centering\tabcolsep=2mm
\begin{tabular}
[l]{lll}%
\includegraphics[width=6cm,height=3cm]{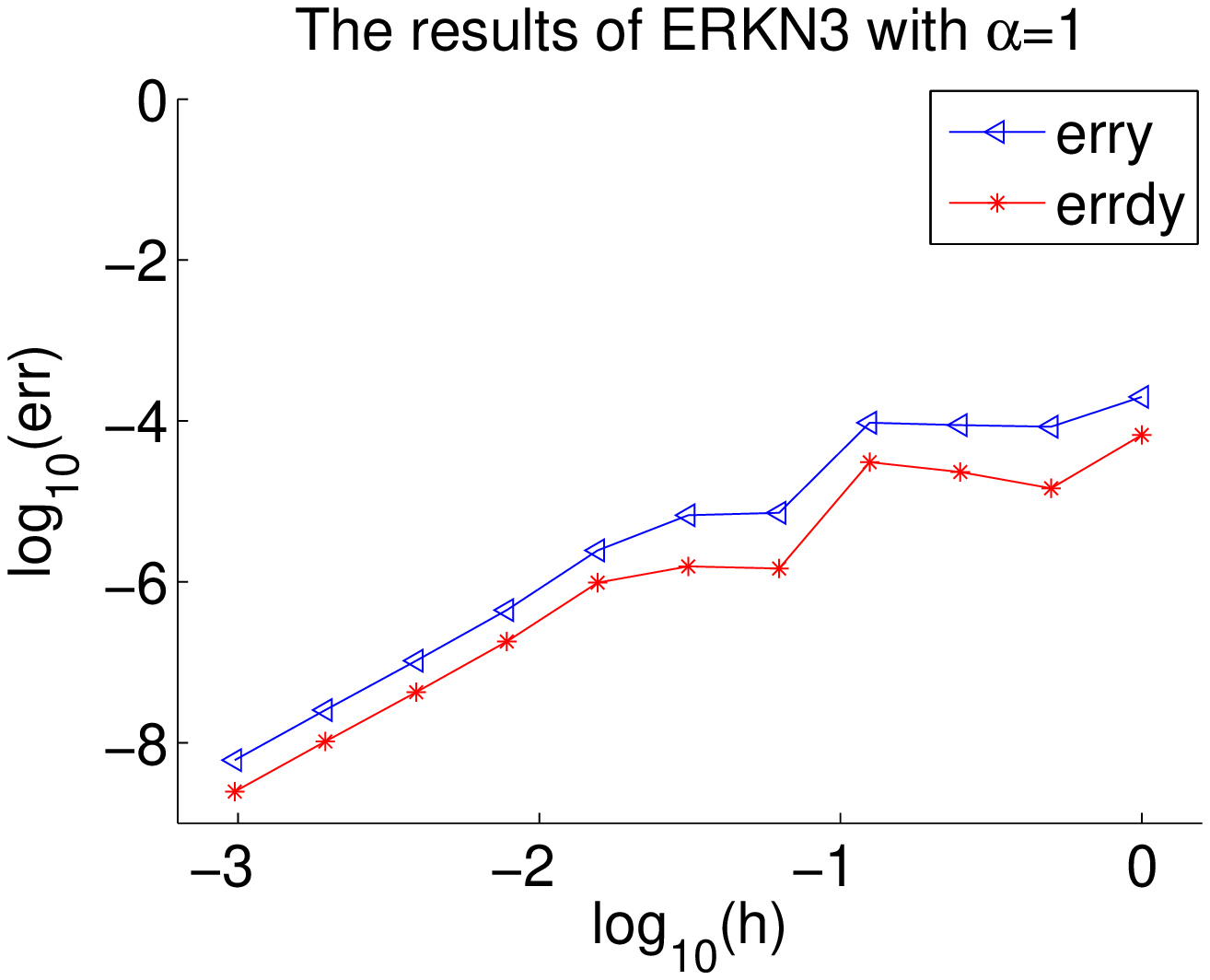}
\includegraphics[width=6cm,height=3cm]{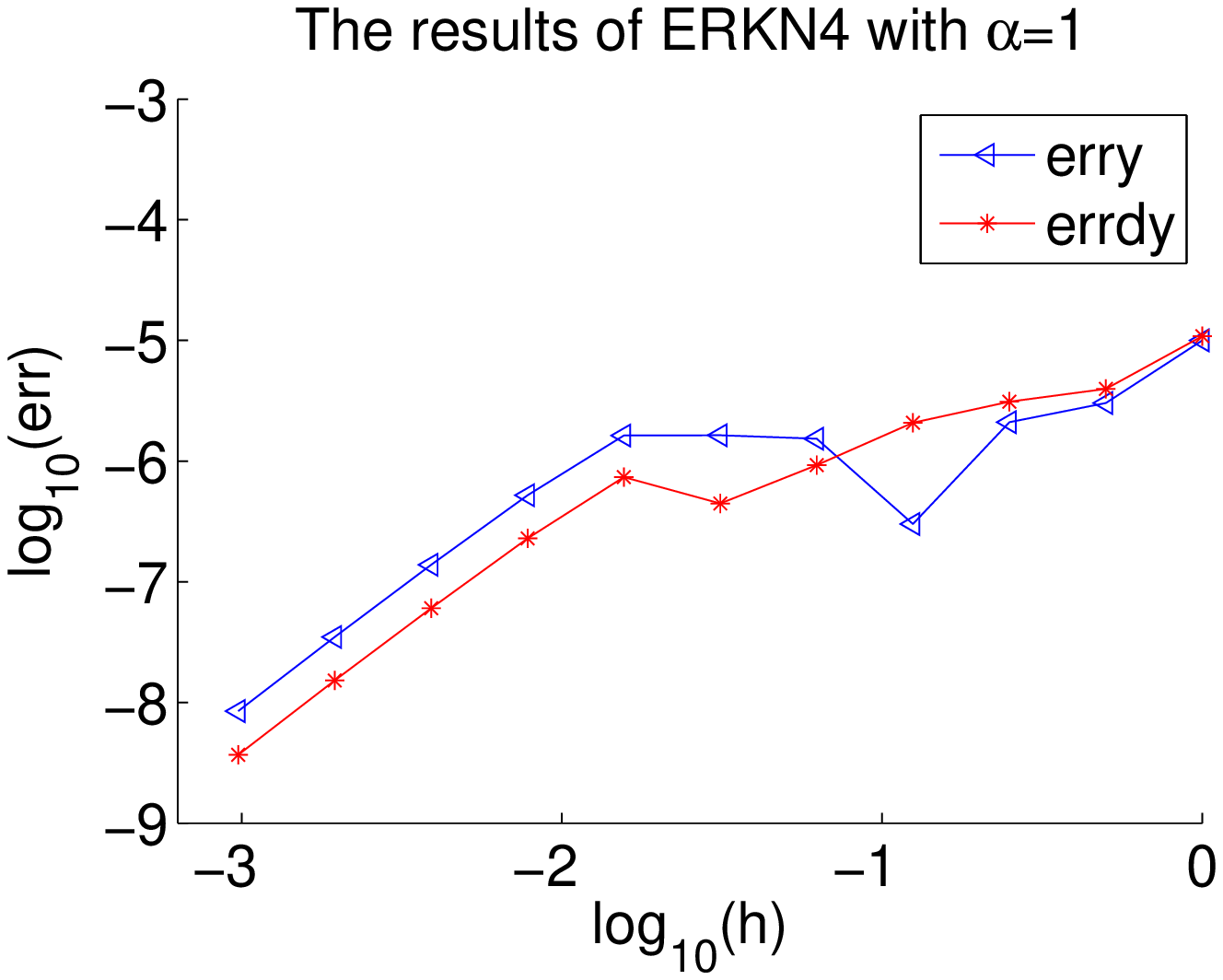}  \\
\includegraphics[width=6cm,height=3cm]{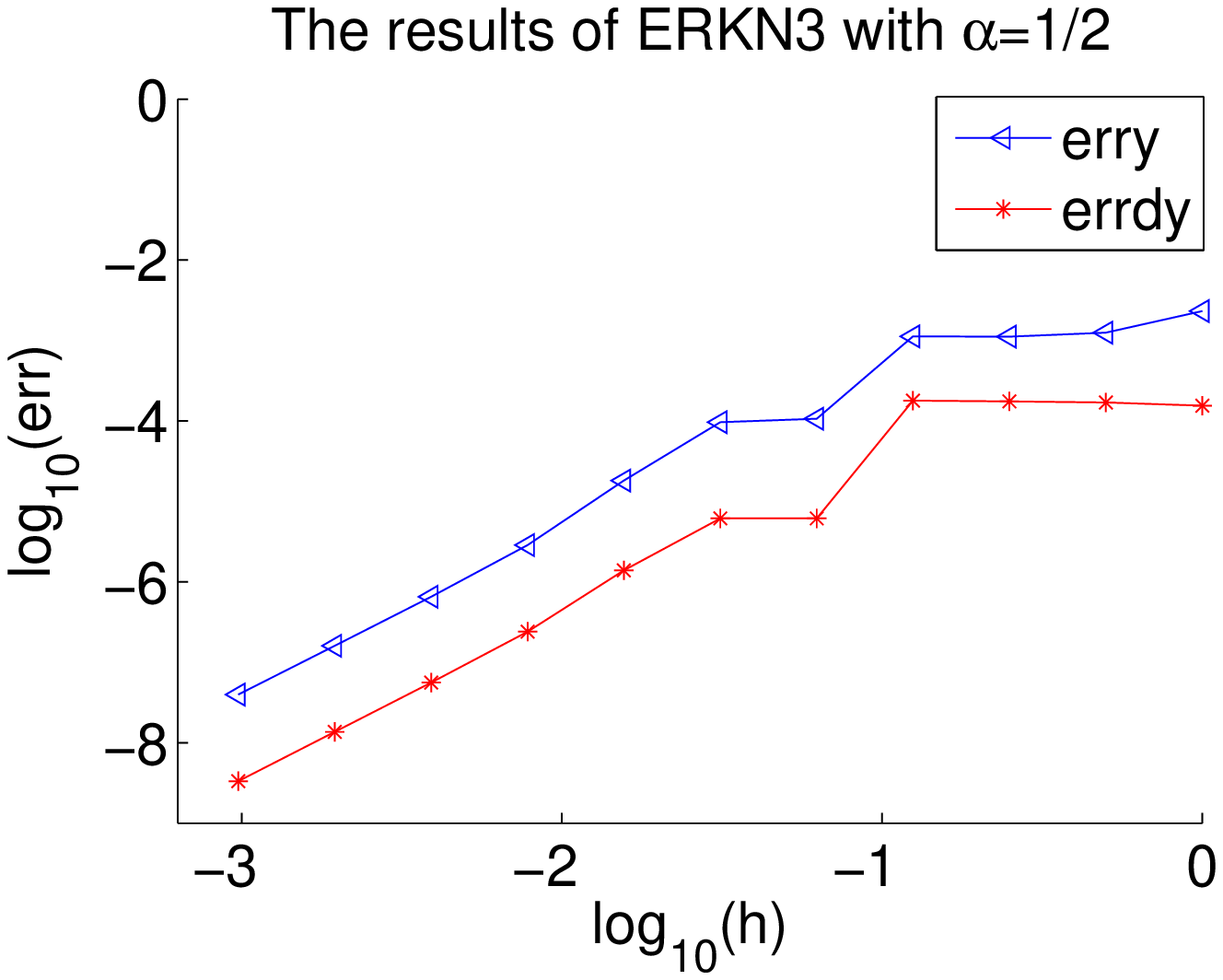}
\includegraphics[width=6cm,height=3cm]{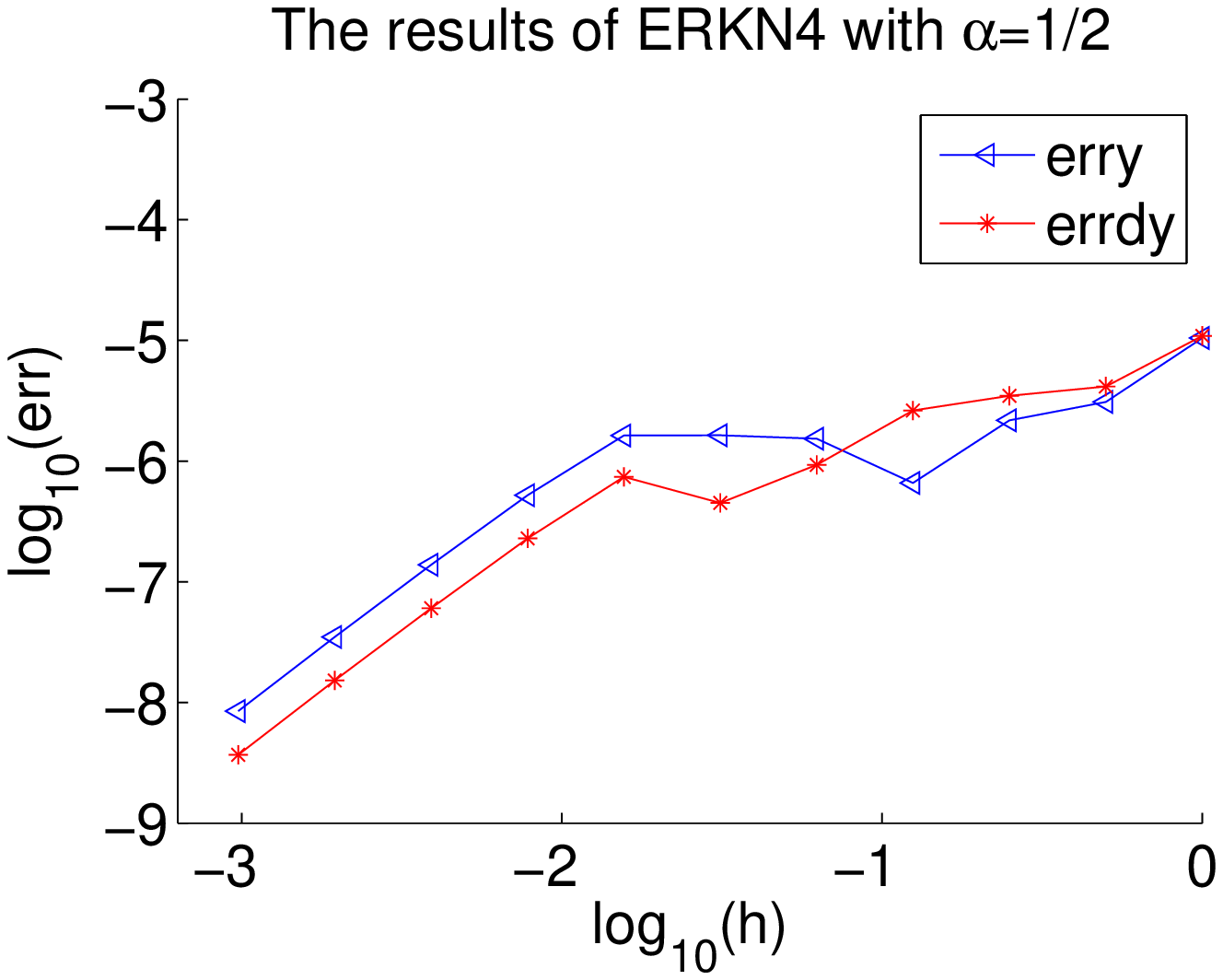}  \\
\includegraphics[width=6cm,height=3cm]{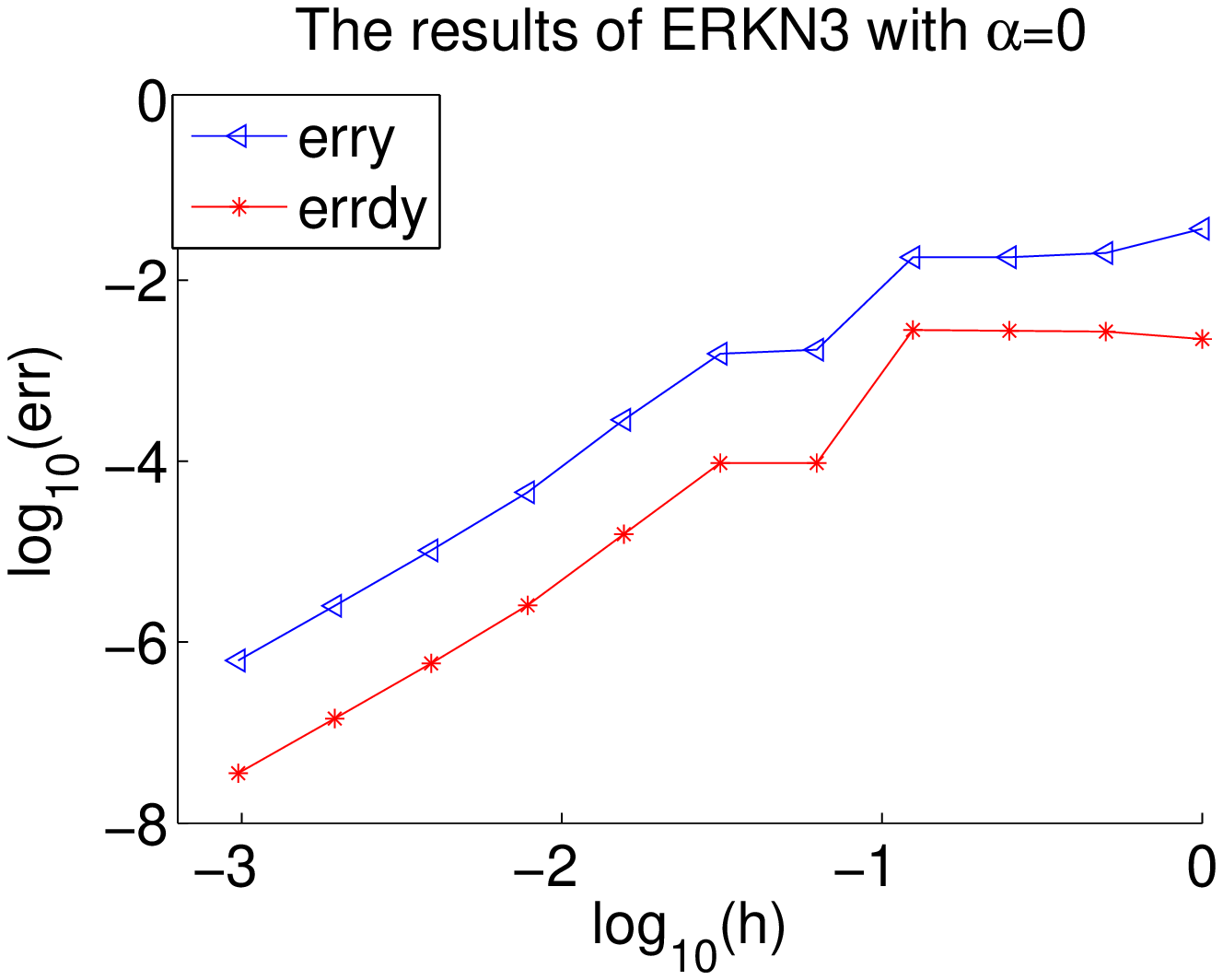}
\includegraphics[width=6cm,height=3cm]{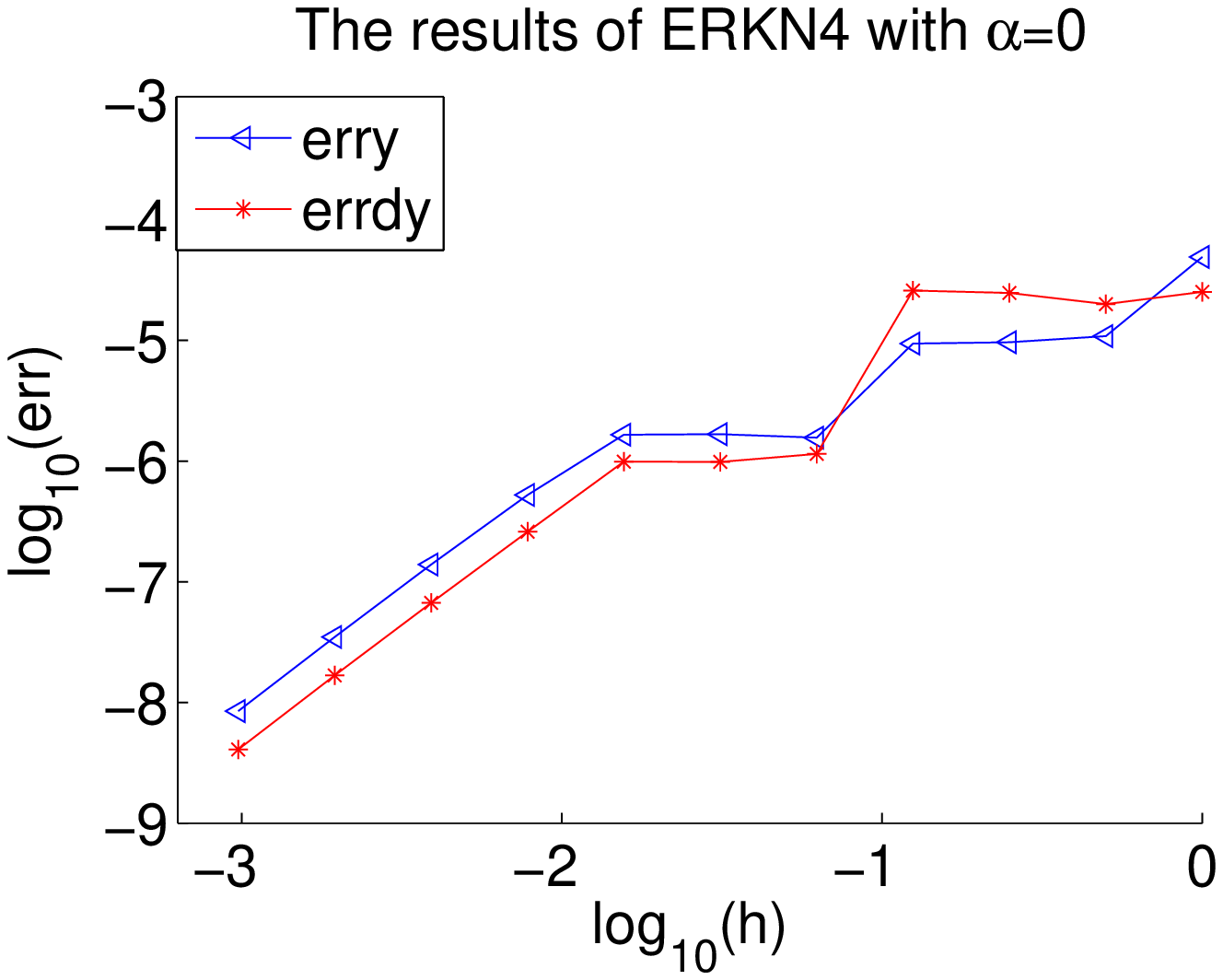}  \\
\includegraphics[width=6cm,height=3cm]{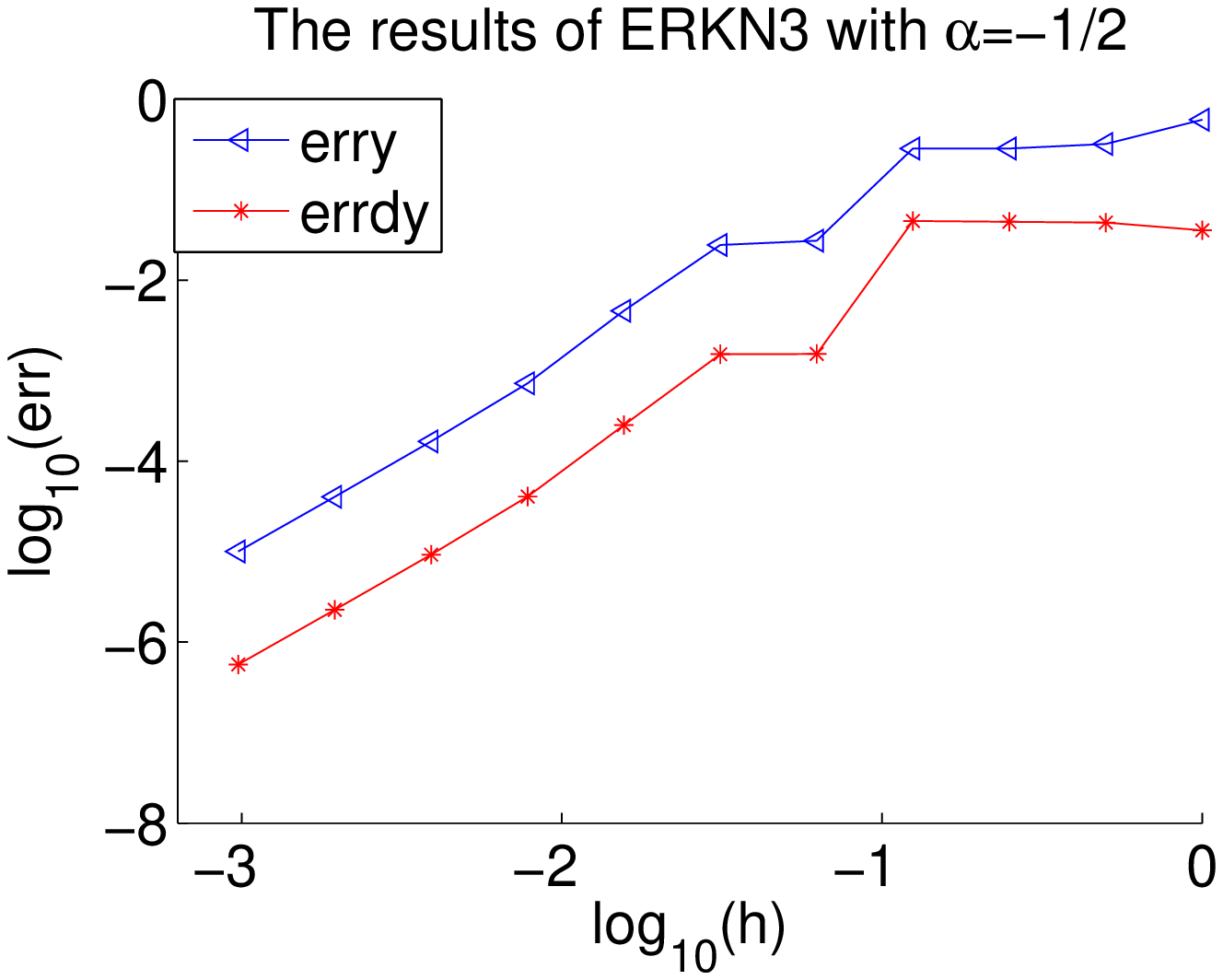}
\includegraphics[width=6cm,height=3cm]{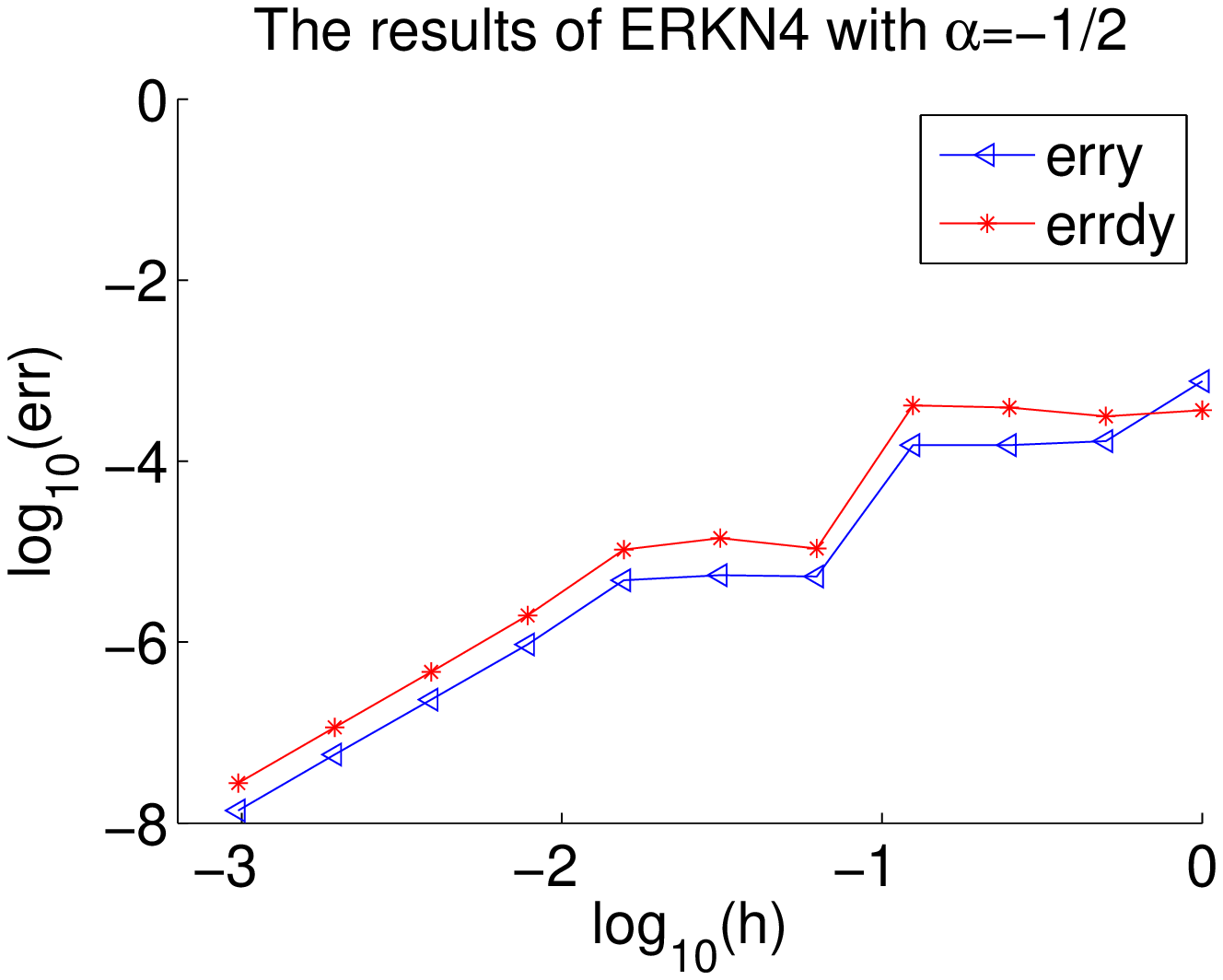}  \\
\includegraphics[width=6cm,height=3cm]{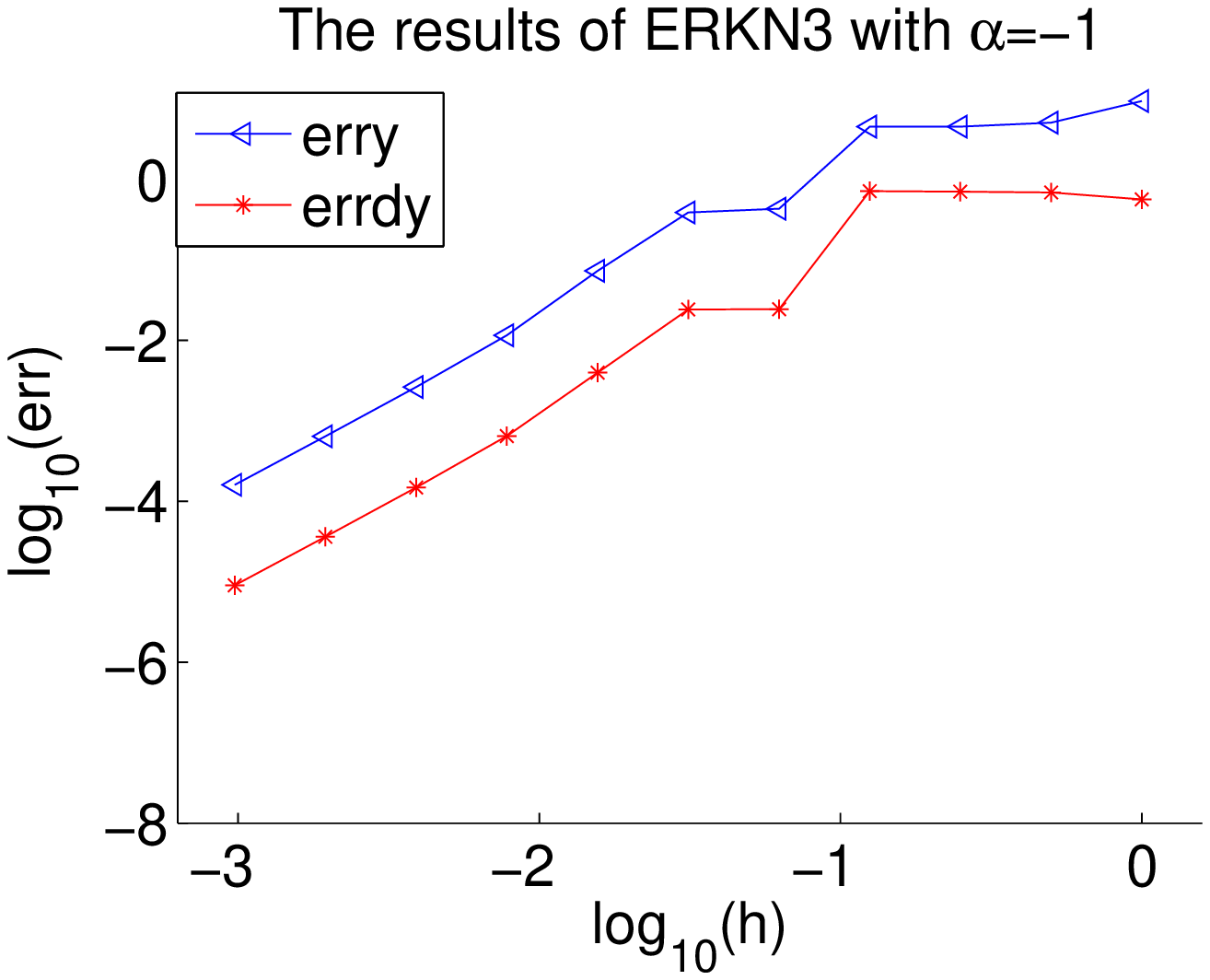}
\includegraphics[width=6cm,height=3cm]{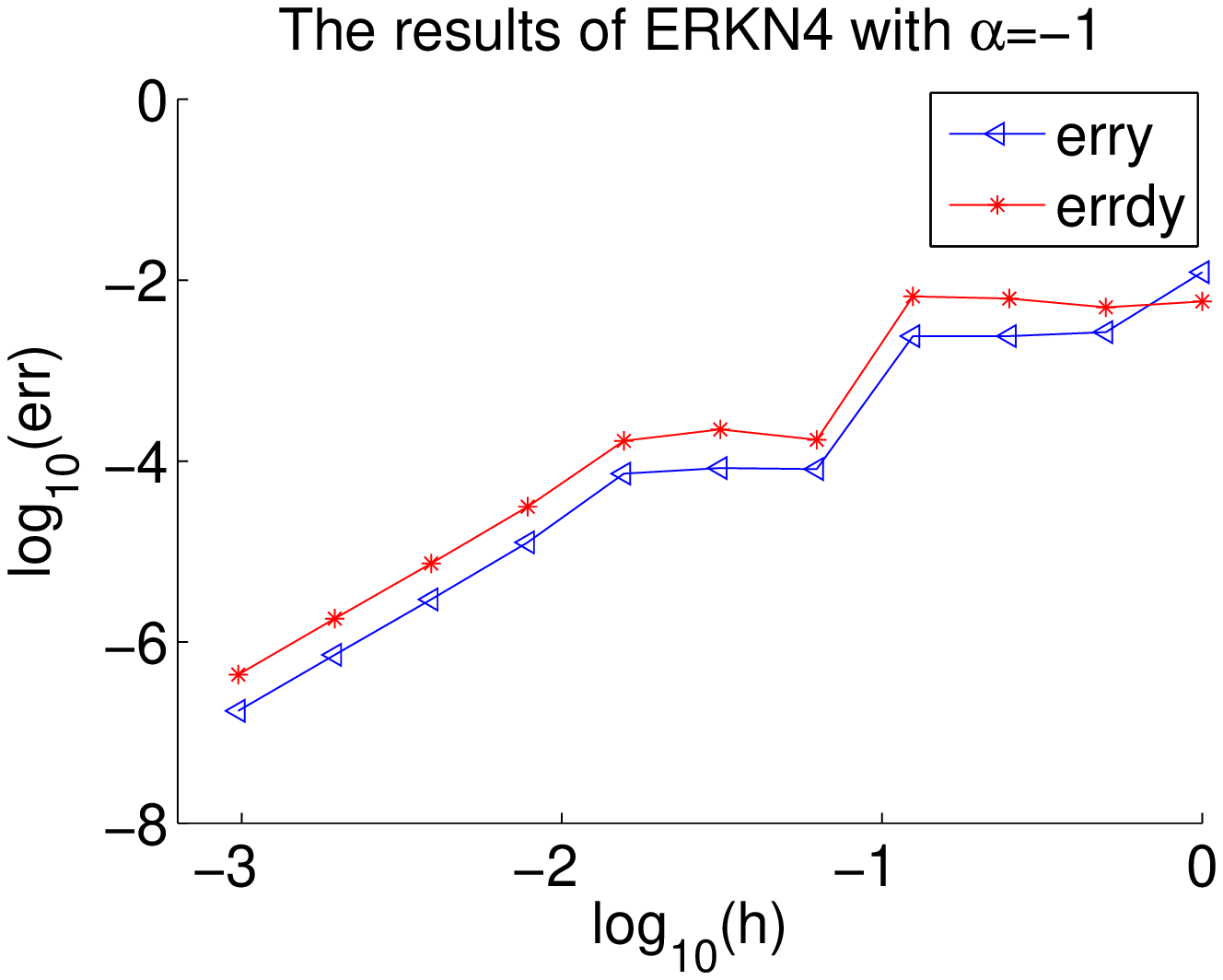}
\end{tabular}
\caption{The logarithm of the  errors    against the logarithm of stepsizes for $K=2^8$.}%
\label{fig1}%
\end{figure}

\section{Conclusions}
\label{sec:conclusions}

In this paper, we have analysed the error bounds
  of ERKN integrators when applied to spatial semidiscretizations of
semilinear wave equations. Optimal second-order convergence has been
obtained without requiring Lipschitz continuous and higher
regularity of the exact solution. Moreover, the analysis is uniform
in the spatial discretization parameter. Based on this work,  we are
hopeful of obtaining an extension to two-stage  ERKN integrators for
semidiscrete semilinear wave equations. Another issue for future
exploration is the error analysis of ERKN integrators in the case of
quasi-linear wave equations.

\end{document}